\setlist[enumerate,1]{label=(\arabic*), ref=(\arabic*)}
\setlist[enumerate,3]{label=(\roman*), ref=(\roman*)}
\theoremstyle{plain}
\newtheorem{theorem}{Theorem}[section]
\newtheorem{lemma}[theorem]{Lemma}
\newtheorem{proposition}[theorem]{Proposition}
\newtheorem{conjecture}[theorem]{Conjecture}
\newtheorem{claim}{Claim}[theorem]
\newtheorem*{claim*}{Claim}
\newenvironment{claimproof}[1][Proof]{\par
	\pushQED{\qed}%
	
	\normalfont \topsep6\p@\@plus6\p@\relax
	\trivlist
	\item[\hskip\labelsep
	\textit{#1}\@addpunct{.}~]\ignorespaces
}{%
	\popQED\endtrivlist\@endpefalse
}
\newcounter{case}
\newcommand{\case}[1]{\refstepcounter{case}\par\medskip%
    \noindent\textbf{Case~\thecase:} #1.}
\crefname{case}{case}{cases}
\theoremstyle{definition}
\newtheorem{definition}[theorem]{Definition}
\newtheorem{remark}[theorem]{Remark}
\newenvironment{proofsketch}[1][Proof sketch]{%
	\begin{proof}[#1]
}{%
	\end{proof}
}
\newcommand{\NN}{\mathbb{N}}
\newcommand{\F}{\mathcal{F}}
\newcommand{\calA}{\mathcal{A}}
\newcommand{\calF}{\mathcal{F}}
\newcommand{\GG}{\mathcal{G}}
\DeclareMathOperator{\ex}{ex} 
\DeclareMathOperator{\Ex}{Ex} 
\NewDocumentCommand{\xsideset}{mmme{_^}}{%
  \mathop{%
    \settowidth{\dimen0}{$\m@th\displaystyle#3$}%
    \dimen0=.5\dimen0
    \settowidth{\dimen2}{$%
      \m@th\displaystyle#3%
      \IfValueT{#4}{_{#4}}%
      \IfValueT{#5}{^{#5}}%
    $}%
    \dimen2=.5\dimen2
    \advance\dimen2 -\dimen0
    \sbox6{\scriptspace\z@$\displaystyle{\vphantom{#3}}#1$}
    \sbox8{\scriptspace\z@$\displaystyle{\vphantom{#3}}#2$}
    \ifdim\wd6>\dimen2 \kern\dimexpr\wd6-\dimen2\relax\fi
    {%
     \mathop{\llap{\copy6}{\displaystyle#3}\rlap{\copy8}}\limits
     \IfValueT{#4}{_{#4}}%
     \IfValueT{#5}{^{#5}}%
    }%
    \ifdim\wd8>\dimen2 \kern\dimexpr\wd8-\dimen2\relax\fi
  }%
}
\newcommand{\mins}[1]{\xsideset{}{_{#1}}\min} 
\newcommand{\symmdiff}{\mathbin{\triangle}}
\newcommand{\defeq}{\coloneqq}
\newcommand{\eqdef}{\eqqcolon}
\let\originalleft\left
\let\originalright\right
\renewcommand{\left}{\mathopen{}\mathclose\bgroup\originalleft}
\renewcommand{\right}{\aftergroup\egroup\originalright}
\renewcommand*{\@textcolor}[3]{%
  \protect\leavevmode
  \begingroup
    \color#1{#2}#3%
  \endgroup
}
\title{On a rainbow extremal problem for color-critical graphs}
\author{%
    Debsoumya Chakraborti%
        \thanks{Discrete Mathematics Group (DIMAG), Institute for Basic Science (IBS), South Korea.
        E-mail: {\tt debsoumya@ibs.re.kr}.
        D.C. was supported by the Institute for Basic Science (IBS-R029-C1).}
    \and
    Jaehoon Kim%
        \thanks{Department of Mathematical Sciences, KAIST, South Korea.
        E-mail: {\tt \{jaehoon.kim, hyunwo9216, jaehyeon.seo\}@kaist.ac.kr}. J.K. was supported by the POSCO Science Fellowship of POSCO TJ Park Foundation.}
    \and
    Hyunwoo Lee\footnotemark[2]
    \and
    Hong Liu%
        \thanks{Extremal Combinatorics and Probability Group (ECOPRO), Institute for Basic Science (IBS), Daejeon, South Korea, Email: {\texttt hongliu@ibs.re.kr}. H.L. was supported by the Institute for Basic Science (IBS-R029-C4) and the UK Research and Innovation Future Leaders Fellowship MR/S016325/1.}
    \and
    Jaehyeon Seo\footnotemark[2]%
}
\begin{document}
\maketitle

\begin{abstract}
    There has been extensive studies on the following question: given $k$ graphs $G_1,\dots, G_k$ over a common vertex set of size $n$, what conditions on $G_i$ ensures a `colorful' copy of $H$, i.e., a copy of $H$ containing at most one edge from each $G_i$? 
    A lower bound on $\sum_{i\in [k]} e(G_i)$ enforcing a colorful copy of a given graph $H$ was considered by Keevash, Saks, Sudakov, and Verstra\"{e}te. They defined $\ex_k(n,H)$ to be the maximum total number of edges of the graphs $G_1,\dots, G_k$ on a common vertex set of size $n$ having no colorful copy of $H$. They completely determined $\ex_k(n,K_r)$ for large $n$ by showing that, depending on the value of $k$, one of the two natural constructions is always the extremal construction. Moreover, they conjectured the same holds for every color-critical graphs and proved it for 3-color-critical graphs. 

    
    
    We prove their conjecture for 4-color-critical graphs and for almost all $r$-color-critical graphs when $r > 4$. Moreover, we show that for every non-color-critical non-bipartite graphs, none of the two natural constructions is extremal for certain values of $k$. This answers a question of Keevash, Saks, Sudakov, and Verstra\"{e}te.
\end{abstract}

\section{Introduction}
For a given collection $\mathcal{F} =\{F_1,\dots, F_k\}$ of sets, a set $X\subseteq \bigcup F_i$ such that $|X\cap F_i|\leq 1$ for each $i\in [k]$ is often called a `colorful’ set of $\mathcal{F}$. If $|X\cap F_i|$ is exactly one for all $i$, then it is usually called a `transversal' of $\mathcal{F}$.
Colorful objects were considered over various types of mathematical objects.
B\'ar\'any considered a transversal of a collection of convex subsets of Euclidean spaces and obtained a colorful version of Carath\'eodory’s theorem in \cite{barany1982generalization}. Furthermore, more colorful variants of Carath\'eodory's theorem \cite{kalai2009colorful} and Helly's theorem \cite{kalai2005topological} were obtained. 
Aharoni and Howard considered transversals of a set systems and obtained colorful version of Erd\H{o}s-Ko-Rado theorem in \cite{aharoni2017rainbow}.

Perhaps the most famous transversals are the ones of Latin squares considered by Euler.
In 1782, Euler \cite{euler1782recherches} considered a Latin square, which is an $n\times n$ array filled with numbers $1,\dots, n$, where every number appears exactly once in each of the rows and columns. 
We may consider a Latin square as a collection of rows, columns, and the sets of entries with the same numbers, then a transversal in a Latin square is a set of $n$ entries such that no two are in the same row or in the same column or contain the same number. 

A transversal in a Latin square is a special instance of a colorful subgraph in edge-colored graphs. By considering rows and columns as the bipartition of $K_{n,n}$, a Latin square naturally corresponds to a proper edge-coloring of $K_{n,n}$. We can consider this edge-colored graph as a collection $\mathcal{G}= \{G_1,\dots, G_n\}$ of graphs where each $G_i$ is the graph consisting of edges of color $i$. A transversal of $\mathcal{G}$ which is also a matching itself, is exactly the transversal of a Latin square that Euler considered. See \cite{wanless2011transversals} for a survey.

This motivates studies about colorful subgraphs of a collection of graphs. Indeed, various interesting results have been proved. Aharoni, Devos, de la Maza, Montejano, and \v{S}\'{a}mal \cite{aharoni2020rainbow} considered a Tur\'an type problem over graph collections, proving that there exists a colorful triangle of $\{G_1,G_2,G_3\}$ if $\min_{i\in [3]} e(G_i)$ is bigger than $\frac{26-2\sqrt{7}}{81} n^2$. Surprisingly, this irrational number is best possible. Besides triangles, other graphs like perfect matchings, Hamilton cycles, and $F$-factors were considered and transversal version of Dirac’s theorem and Hajnal-Szemer\'{e}di theorems were obtained \cite{cheng2021rainbow,joos2020rainbow,montgomery2021transversal}.

In the above line of works, the results were obtained in terms of the restriction on $\min_{i} e(G)$ or $\min_{i} \delta(G)$. In other words, all graphs in the collection has to satisfy the given condition. However, enforcing conditions to all graphs in the graph collection seems quite restrictive. What if we reduce these conditions to the average behaviors of the graphs within the collection? In other words, in order to guarantee a colorful copy of $H$, how large $\sum_{i\in [k]} e(G_i)$ has to be?
Indeed, such a problem was already considered by a pioneering work of Keevash, Saks, Sudakov, and Verstra\"{e}te~\cite{keevash2004multicolour} and also recently by Frankl~\cite{frankl22}.
Keevash, Saks, Sudakov, and Verstra\"{e}te used different notion by considering this collection of graphs as one multi-graph with an edge-coloring  where the set of edges with each color corresponds to a simple graph within the collection. We follow the notion below introduced by them.

In this paper, a graph always means a simple graph.
A \emph{simple $k$-coloring} of a multigraph $G$ is a decomposition of the edge multiset as a disjoint sum of $k$ simple graphs which are referred as \emph{colors}. A sub-(multi)graph $H$ of a multigraph $G$ is called \emph{multicolored} if its edges receive distinct colors in a given simple $k$-coloring of $G$. If \(G\) contains a multicolored copy of \(H\), we would also say in short that \(G\) contains a multicolored \(H\). The \emph{$k$-color Tur\'{a}n number}, denoted by $\ex_k(n,H)$, is the maximum number of edges in an $n$-vertex multigraph that has a simple $k$-coloring containing no multicolored copy of $H$. The simply \(k\)-colored multigraphs that achieve this maximum are called the \emph{$k$-color extremal multigraphs} of \(H\). We denote the set of the extremal multigraphs by \(\Ex_k(n,H)\), but if there is only one such multigraph up to graph isomorphism, then we abuse the notation and also refer to it by \(\Ex_k(n,H)\).

If $k\leq e(H)-1$, then it is clear that $\Ex_k(n,H)$ is the multigraph consisting of $k$ copies of complete graphs. For $k\geq e(H)$, there are two natural maximal\footnote{maximal with respect to the subgraph relationship.} simply $k$-colored multigraph $G$ having no multicolored copy of $H$.
First, one can consider the multigraph consisting of $e(H) - 1$ copies of the complete graph. Secondly, one can consider $k$ identical copies of a fixed extremal $H$-free graph. The first multigraph has $(e(H)-1)\binom{n}{2}$ edges, whereas the second construction has $k\cdot\ex(n,H)$ edges. 
If $k$ is close to $e(H)$, then the first construction has more edges than the second one.
However, as $k$ grows (we allow $k$ to depend on $n$), for certain value of $k$ onward, the second construction has more edges than the first one. In \cite[Theorem~1.1]{keevash2004multicolour}, it was shown for the multicolor Tur\'{a}n problem for $H$ that whenever $k \ge \binom{n}{2} - \ex(n,H) + e(H)$, the second construction always gives the unique extremal multigraph. Keevash, Saks, Sudakov, and Verstra\"{e}te proved that when $H$ is a complete graph, the extremal multigraph is always one of these two natural constructions.  
It is well-known that the unique extremal $K_r$-free graph is the Tur\'an graph $T_{r-1}(n)$, which is the $n$-vertex balanced complete $(r-1)$-partite graph \cite{turan1954theory}.

\begin{theorem} [{\cite[Theorem~1.2]{keevash2004multicolour}}]
Suppose that $r \ge 2$, $k \ge \binom{r}{2}$, and $n > 10^4r^{34}$. Let $G$ be an $n$-vertex $k$-color extremal  multigraph of $K_r$. Then either all colors of $G$ are identical Tur\'an graphs $T_{r-1}(n)$, or there are exactly $\binom{r}{2} - 1$ non-empty colors of $G$, all of which are complete graphs $K_n$. In particular,
    \[
        \ex_k(n,K_r) = \begin{cases*}
            k \cdot t_{r-1}(n) & for \(k \ge \frac{1}{2}(r^2 - 1)\), \\
            \bigl(\binom{r}{2}-1\bigr)\binom{n}{2} & for \(\binom{r}{2} \le k < \frac{1}{2}(r^2 - 1)\).
        \end{cases*}
    \]
\end{theorem}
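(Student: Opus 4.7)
My plan is to tackle the theorem in three stages: verify the two candidate constructions, prove a matching upper bound, and characterize the extremal configurations. Given any multicolored-$K_r$-free simple $k$-coloring $G = G_1 \cup \cdots \cup G_k$, write $\hat G := \bigcup_i G_i$ for its underlying simple support graph, and for each edge $e \in \hat G$ write $C(e) := \{i : e \in G_i\}$ for its color set and $\mu(e) := |C(e)|$ for its multiplicity, so that $e(G) = \sum_{e \in \hat G} \mu(e)$. The construction side is immediate: $k$ identical copies of $T_{r-1}(n)$ has $K_r$-free support (so no multicolored $K_r$), and $\binom{r}{2}-1$ copies of $K_n$ has too few colors for any $K_r$ to be multicolored.

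For the upper bound I would case-split on whether $\hat G$ contains a $K_r$. If $\hat G$ is $K_r$-free, then each $G_i$ is $K_r$-free, and Tur\'an's theorem gives $e(G) \le k \cdot t_{r-1}(n)$; equality forces each $G_i \cong T_{r-1}(n)$. To show all colors then share a common partition, I would label every vertex by its pair of part indices under any two Tur\'an partitions: two distinct partitions of $[n]$ into $r-1$ parts produce at least $r$ nonempty joint-label classes for $r \ge 3$ (via a short set-partition refinement argument), and choosing one vertex from each such class gives a $K_r$ in the union, contradicting $K_r$-freeness. If $\hat G$ contains a $K_r$, the target is $e(G) \le (\binom{r}{2}-1)\binom{n}{2}$. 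The relevant local structure comes from Hall's theorem: for every $K_r$-copy $K \subseteq \hat G$, there must exist $E' \subseteq E(K)$ with $|\bigcup_{e \in E'} C(e)| < |E'|$. The crux is to aggregate these local Hall-defects into a global bound on $\sum_e \mu(e)$, which I would attempt via a double-counting over triples $(e, K, c)$ with $e \in E(K)$ and $c \in C(e)$, combined with a stability-type argument.

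For the exact characterization, the $K_r$-free case yields the identical-Tur\'an extremal configuration directly. In the $K_r$-containing case, I would couple the upper bound with a stability version of Tur\'an's theorem (any $K_r$-free graph close to the Tur\'an density is structurally close to $T_{r-1}(n)$), then use the hypothesis $n > 10^4 r^{34}$ to rigidify near-extremal configurations into exactly $\binom{r}{2}-1$ copies of $K_n$. The case split on $k$ in the formula then follows by comparing the two values $k \cdot t_{r-1}(n)$ and $(\binom{r}{2}-1)\binom{n}{2}$, whose crossover lies near $k = \tfrac{1}{2}(r^2-1)$ for large $n$.

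The principal obstacle I anticipate is the upper bound in the $K_r$-containing case: converting the local Hall-defect conditions, which live on each $K_r$-copy separately, into the single global cap $\sum_e \mu(e) \le (\binom{r}{2}-1)\binom{n}{2}$. The unusually strong threshold $n > 10^4 r^{34}$ strongly signals an iterative stability-and-removal argument where error terms accumulate polynomially in $r$ at each step, so achieving the clean extremal statement demands careful book-keeping of these compounded errors throughout the iteration.
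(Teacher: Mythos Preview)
This theorem is quoted from \cite{keevash2004multicolour} and is not proved in the present paper, so there is no proof here to compare against. I will therefore just assess your plan on its own merits.

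Your case split on whether the support graph $\hat G$ contains a $K_r$ has a fatal flaw in the second branch. You write that when $\hat G\supseteq K_r$ the target is $e(G)\le\bigl(\binom{r}{2}-1\bigr)\binom{n}{2}$, and you flag this as the hard step. In fact this inequality is simply false for multicolored-$K_r$-free $G$, so no amount of double-counting over Hall defects will establish it. For a concrete counterexample take $r=4$, $k=10$, let $G_1=K_n$, and let $G_2=\cdots=G_{10}$ all equal the same balanced complete bipartite graph $K_{\lfloor n/2\rfloor,\lceil n/2\rceil}$. Then $\hat G=K_n$ certainly contains $K_4$, yet any four vertices have at least two pairs lying inside a part of the bipartition, and those two edges are present only in colour $1$; hence no $K_4$ is multicolored. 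But
\[
e(G)=\binom{n}{2}+9\left\lfloor \tfrac{n^2}{4}\right\rfloor \approx \tfrac{11}{4}n^2 \;>\; \tfrac{5}{2}n^2\approx 5\binom{n}{2}=\Bigl(\tbinom{4}{2}-1\Bigr)\binom{n}{2}.
\]
The same construction (one colour $K_n$ and $k-1$ identical copies of $T_{r-2}(n)$) works for every $r\ge 4$ once $k$ is a bit larger than $\binom{r}{2}$. So the dichotomy ``either $\hat G$ is $K_r$-free, or $e(G)\le\bigl(\binom{r}{2}-1\bigr)\binom{n}{2}$'' is not the right organising principle; the two extremal structures are not separated by whether $\hat G$ contains a clique.

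The actual proof in \cite{keevash2004multicolour} proceeds quite differently. One first reduces to nested colourings (this paper restates that step as \Cref{prop:reform_to_nested-G}), so that the multigraph is described entirely by its edge-multiplicity function, and then runs a minimum-degree/vertex-removal argument in the spirit of Simonovits' progressive induction: assuming $\delta(G)\ge\delta(A(n))$ for the candidate extremal $A(n)$, one locates an edge of high multiplicity, builds up an $(r-1)$-vertex ``skeleton'' using average-degree considerations, and shows that any further vertex attaching to it with the right total multiplicity either completes a multicolored $K_r$ or is forced into a unique part. Your $K_r$-free-support analysis and the Hall/SDR observation are both correct and are ingredients of that argument, but they have to be deployed inside this nested/min-degree framework rather than via the global case split you propose.
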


It is natural to consider $\ex_k(n,H)$ for more general graphs $H$. However, as the exact structures of the extremal graphs of $H$ are not known for most of the graphs $H$, it makes sense to first focus on the graphs $H$ whose extremal graphs are well-understood. An important class of such graphs is the class of color-critical graphs. A graph or a multigraph $H$ is called \emph{$r$-color-critical} if it has chromatic number $r$, and it has an edge (called a \emph{critical edge}) whose removal reduces the chromatic number to $r-1$. 
For an $r$-color-critical graph $H$, Simonovits~\cite[Theorem~1]{simonovits1968method} proved that $T_{r-1}(n)$ is the unique extremal graph of $H$ when $n$ is sufficiently large.
Indeed, Keevash, Saks, Sudakov, and Verstra\"{e}te conjectured that the above theorem can be extended to color-critical graphs.

\begin{conjecture}[{\cite[Conjecture~1.3]{keevash2004multicolour}}]\label{conj: conj}
    Suppose $r\geq 3$ and $k\geq h$. 
    Let $H$ be an \(r\)-color-critical graph with $h$ edges. Then, there exists an \(n_0=n_0(H)>0\) such that for all $n\ge n_0$, 
    the $k$-color extremal multigraph of $H$ either consists of exactly $h-1$ nonempty colors where each of them is a copy of $K_n$ or 
    consists of $k$ colors where all of them are identical copies of $T_{r-1}(n)$. In particular,
    \[
        \ex_k(n,H) = \begin{cases*}
            \left(h-1\right)\binom{n}{2} & for \(h \le k < \frac{r-1}{r-2}(h - 1)\),\\
            k \cdot t_{r-1}(n) & for \(k \ge \frac{r-1}{r-2}(h - 1)\).
        \end{cases*}
    \]
\end{conjecture}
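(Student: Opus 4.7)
The plan is to establish Conjecture~\ref{conj: conj} via a stability-based analysis of the colors of an extremal multigraph. Let $H$ be an $r$-color-critical graph with $h$ edges, let $k \ge h$, and let $G = G_1 \cup \dots \cup G_k$ be a $k$-color extremal multigraph of $H$ on $n$ vertices containing no multicolored copy of $H$. The aim is to show that $G$ is one of the two natural constructions in the conjecture. I would proceed in four stages.

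\textbf{Stage 1 (Multicolor stability).} Since $k \ge h$, if any single color $G_i$ contained many copies of $H$ then one could complete a multicolored $H$ by swapping an edge with an edge from any of the other $k - 1 \ge h - 1$ colors; hence every color $G_i$ is approximately $H$-free. Applying Simonovits's stability theorem to each color, any color with $e(G_i) \ge (1 - \varepsilon) t_{r-1}(n)$ is close in edit distance to some Turán graph $T_{r-1}(n)$. Call such colors \emph{dense} and the others \emph{sparse}. Colors with edge count much above $t_{r-1}(n)$ can be ruled out because a supersaturation argument in such a color would again produce a multicolored $H$ using the other colors.

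\textbf{Stage 2 (Partition agreement).} The heart of the argument is showing that all dense colors use essentially the same $(r-1)$-partition of $[n]$. Suppose two dense colors $G_i$ and $G_j$ have Turán partitions that disagree on a positive fraction of vertices. Fix a critical edge $uv \in E(H)$ so that $H - uv$ is $(r-1)$-chromatic. I would embed $H - uv$ into the shared near-Turán backbone using supersaturation and then place $uv$ using an edge that is intra-part with respect to one partition but respects the other, so that $uv$ can be drawn from a different color than the edges of $H - uv$. A greedy transversal argument then assigns the $h$ edges of $H$ to $h$ distinct colors, contradicting the assumption of no multicolored $H$.

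\textbf{Stage 3 (Exact Turán structure).} Once a common partition is forced, I would run a local symmetrization/cleaning step to show that each dense color is exactly $T_{r-1}(n)$: any intra-part edge in a dense color can be traded for a missing cross-part edge without losing edges, or else it combines with the other dense colors to produce a multicolored $H$ via the same transversal argument.

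\textbf{Stage 4 (Counting trade-off).} Let $d$ be the number of dense colors. Since each dense color now equals $T_{r-1}(n)$ on a common partition, and each sparse color contributes at most $(1-\varepsilon)t_{r-1}(n)$ edges, a direct comparison with the complete-graph construction pinpoints the threshold $k = \frac{r-1}{r-2}(h-1)$: when $d$ must be close to $k$, the Turán construction is extremal, whereas when the edges concentrate on at most $h - 1$ colors each close to $K_n$, the complete-graph construction wins. Standard edge-count bounds exclude intermediate configurations.

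The main obstacle is Stage 2. For $r = 3$, the graph $H - uv$ is bipartite, giving great embedding flexibility; this is what enables the proof of Keevash--Saks--Sudakov--Verstra\"ete. For $r = 4$, one has $\chi(H - uv) = 3$ and the embedding can still be controlled by exploiting the position of the critical edge relative to a proper $3$-coloring of $H - uv$. For $r > 4$, certain $r$-color-critical graphs lack the structural flexibility to guarantee a transversal assignment for every near-partition mismatch, which is the origin of the "almost all $r$-color-critical graphs" caveat: pathological $H$ that pin down the embedding too rigidly (for example, those with very few critical edges or with critical edges forced into atypical positions of the partition) must be excluded or handled by a tailored argument.
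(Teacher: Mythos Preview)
First, note that the statement you are attempting to prove is Conjecture~\ref{conj: conj}, which the paper does \emph{not} establish in full; it proves only the cases $r=4$ (Theorem~\ref{thm:ex-gph_4-cc-gph}) and ``almost all'' $r$-color-critical graphs for $r\ge 5$ (Theorem~\ref{thm:ex-gph_almost-all_r-cc-gph}). The general conjecture remains open, so a complete proof would be a new result, not a reproduction.

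Your Stage~1 contains a genuine error. You claim that ``colors with edge count much above $t_{r-1}(n)$ can be ruled out,'' but in the extremal construction $(h-1)K_n$ each of the $h-1$ nonempty colors is a complete graph with $\binom{n}{2}$ edges, far above $t_{r-1}(n)$, and there is no multicolored $H$ simply because only $h-1$ colors are present. Your dense/sparse dichotomy relative to $t_{r-1}(n)$ therefore cannot be the organizing principle, since one of the two target configurations lies entirely on the ``ruled out'' side; indeed your Stage~4 implicitly contradicts Stage~1 by invoking this very configuration. The supersaturation-swap you sketch (``swap an edge with an edge from any of the other $k-1$ colors'') also does not yield a multicolored $H$: replacing one edge of a monochromatic copy still leaves $h-1$ edges of a single color.

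The paper's route is structurally different. It first reduces to \emph{nested} colorings (Proposition~\ref{prop:reform_to_nested-G}), so that the multigraph is described entirely by edge multiplicities rather than by individual color classes; this sidesteps the per-color analysis entirely. It then passes to a \emph{color-reduced multigraph} $H_c$ on $r$ vertices and proves goodness for $H_c$ via explicit edge-embedding orders (Sections~\ref{sec:4-vtx-4-cc-mgph_good}--\ref{sec:r-vtx-mgph_in_Fr_good}). A multicolor regularity lemma transfers stability from $H_c$ to $H$ (Section~\ref{sec:mcol-reg-lem}), and an $H$-friendly-submultigraph argument (Lemma~\ref{lem:H-friendly-submgph=>(r-1)-partite}) upgrades stability to the exact structure. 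The restriction to $\mathcal F_r$ for $r\ge 5$ arises not from partition-mismatch obstructions as you suggest, but from the need to bound the edge multiplicities of $H_c$ so that the greedy embedding order in Lemma~\ref{lem:r-vtx-Fr_embedding} satisfies \eqref{eq:proper-edge-emb-orders_condition}.
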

The $r=3$ case of the conjecture was proved in \cite[Theorem~1.4]{keevash2004multicolour}. We further support the conjecture by proving the $r=4$ case. When $r \ge 5$, we also prove this conjecture for `most' of the $r$-color-critical graphs.

\begin{theorem} \label{thm:ex-gph_4-cc-gph}
    \Cref{conj: conj} holds for $r=4$.
\end{theorem}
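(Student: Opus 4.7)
The plan is to extend the approach used for $r = 3$ in \cite[Theorem~1.4]{keevash2004multicolour} to handle the tripartite structure of $T_3(n)$. Fix an extremal $n$-vertex simply $k$-colored multigraph $G = G_1 \sqcup \cdots \sqcup G_k$, write $h = e(H)$, and let $e^* = u^*v^*$ be a critical edge of $H$ so that $\chi(H - e^*) = 3$. The first step is to classify the color classes: each $G_i$ is either $K_n$ or $K_4$-free. Assume for contradiction that some $G_i$ contains a $K_4$ but $G_i \neq K_n$, and let $xy$ be a non-edge of $G_i$. By extremality, adding $xy$ to color $i$ creates a multicolored $H$; one then uses the flexibility provided by the $K_4$ in $G_i$ (and the fact that any $4$-chromatic $H$ embeds into suitable blow-ups of $K_4$) together with supersaturation in the remaining colors to reroute this multicolored $H$ into one already present in $G$, a contradiction. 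The same type of argument also shows that at most $h - 1$ colors equal $K_n$: otherwise one directly selects a transversal forming a multicolored $H$.

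Next, Simonovits's stability theorem applied to each $K_4$-free $G_i$ having close to $t_3(n)$ edges forces $G_i$ to be $o(n^2)$-close to a tripartite graph, and an averaging argument over $\sum e(G_i)$ shows that most non-complete $G_i$ do have close to $t_3(n)$ edges. The crucial and most delicate step is to show that all non-complete colors respect a single tripartition $V(G) = A_1 \cup A_2 \cup A_3$: misaligned edges in two different color classes are combined with a multicolored $H - e^*$ in the remaining colors to construct a multicolored $H$ in $G$, contradicting extremality.

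Once a common tripartition is fixed, I would upgrade ``close to $T_3(n)$'' to ``exactly $T_3(n)$'' by a local replacement argument. Any edge of some $G_i$ inside a part $A_t$ can be deleted, and any missing cross-edge between parts can be inserted, without destroying the absence of a multicolored $H$, using that the remaining colors still contain many disjoint multicolored copies of $H - e^*$. This forces every non-complete $G_i$ to equal $T_3(n)$ with respect to the common tripartition. A final edge-count comparison of $(h-1)\binom{n}{2}$ and $k \cdot t_3(n)$, which cross at $k = \frac{3}{2}(h-1)$, then yields the stated value of $\ex_k(n,H)$.

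The main obstacle is the tripartition alignment step. Tripartitions of $V(G)$ admit several qualitatively different types of misalignment (a single vertex moved between two parts, two vertices moved asymmetrically across three parts, a cyclic shift, and so on), and for each type one must construct a tailored multicolored $H - e^*$ gadget using edges from the remaining colors. In the $r = 3$ case, bipartitions admit essentially a single type of misalignment, which is why the argument there is substantially shorter. Handling all misalignment types uniformly for every $4$-color-critical $H$, and carefully tracking enough leftover ``flexibility'' in the other colors to supply the missing edges of $H - e^*$, is where the bulk of the technical work is expected to lie.
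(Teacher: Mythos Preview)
Your approach diverges substantially from the paper's, and the first step already contains a genuine gap. You claim that if some color $G_i$ contains a $K_4$ but is not $K_n$, then adding a missing edge $xy$ to $G_i$ creates a multicolored $H$ which can be ``rerouted'' using the $K_4$. But a multicolored copy of $H$ uses at most one edge of $G_i$; if that edge is $xy$, the presence of an unrelated $K_4$ elsewhere in $G_i$ gives you no mechanism to replace $xy$ while keeping the rest of the embedding intact. The $K_4$ and the edge $xy$ may be vertex-disjoint, and $H$ is a fixed graph with a fixed image in $G$. There is no ``supersaturation in the remaining colors'' argument here either: the other colors were already used optimally in the multicolored $H$, and you have not created any slack. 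So the dichotomy ``each $G_i$ is $K_n$ or $K_4$-free'' is unsupported, and without it the subsequent Simonovits-stability step has nothing to act on.

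The paper avoids this entirely. It first passes to a nested coloring (\Cref{prop:reform_to_nested-G}), then associates to $H$ a \emph{color-reduced} $4$-vertex $4$-color-critical multigraph $H_c$ obtained by contracting color classes of a critical $4$-coloring of $H$. The multicolor regularity lemma yields a reduced multigraph $R$ which is multicolored-$H_c$-free; the heart of the proof is then a direct analysis of \emph{$4$-vertex} host multigraphs (\Cref{lem:4-vtx-4-cc-mgph_small-case} and \Cref{lem:ex-gph_4-vtx-4-cc-mgph_weak-deg-bd}) showing that $R$ is close to $(h-1)K_m$ or $kT_3(m)$. This stability transfers back to $G$ (\Cref{lem:ex-gph_stab_cc-gph_with_very-good-cred-mgph}), and a short cleaning argument using \emph{$H$-friendly} submultigraphs (\Cref{lem:H-friendly-submgph=>(r-1)-partite}) upgrades ``close'' to ``equal''. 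In particular, the paper never classifies individual colors and never faces your tripartition-alignment obstacle: working with the multigraph $R$ (whose edge multiplicities already aggregate all colors) makes the alignment issue disappear, at the cost of the technical \Cref{lem:4-vtx-4-cc-mgph_small-case}. Your proposed color-by-color route, even if the first step could be repaired, would have to confront exactly the case analysis you flag as the main obstacle, and you give no indication of how to carry it out for an arbitrary $4$-color-critical $H$.
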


\begin{theorem} \label{thm:ex-gph_almost-all_r-cc-gph}
    For $r\geq 5$ and a given $\varepsilon >0$, there exists $s_0$ such that the following holds for all $s\geq s_0$.
    At least $(1-\varepsilon)$-fraction of all $s$-vertex $r$-color-critical graphs $H$ on the vertex set $[s]$ satisfies \Cref{conj: conj}.
\end{theorem}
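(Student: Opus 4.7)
The plan is to identify a structural condition $(\star)$ on an $r$-color-critical graph $H$ such that (i) $(\star)$ implies \Cref{conj: conj} for $H$, and (ii) $(\star)$ holds for all but an $\varepsilon$-fraction of $s$-vertex $r$-color-critical graphs on $[s]$. For (ii) I would exploit the fact, in the spirit of the Erd\H{o}s--Kleitman--Rothschild theorem, that almost every graph on $[s]$ with chromatic number $r$ is close in edit distance to the balanced complete $(r-1)$-partite Tur\'an graph $T_{r-1}(s)$. Consequently, a typical $r$-color-critical graph $H$ has many critical edges, each lying in many small rigid color-critical substructures such as copies of $K_r$. The condition $(\star)$ is chosen to encode this abundance of critical edges together with enough local rigidity, and a direct counting argument on labeled $r$-color-critical graphs makes (ii) routine.

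For (i), let $G = G_1 \cup \dots \cup G_k$ be an $n$-vertex $k$-color extremal multigraph of $H$ for $n$ large. I would first apply the stability version of Simonovits's theorem to each color $G_i$ to classify it as either \emph{dense}, meaning $G_i$ is within $o(n^2)$ edit distance of $T_{r-1}(n)$, or \emph{sparse}, meaning $e(G_i) \le (1 - \delta) \ex(n, H)$ for a small constant $\delta > 0$. Using $(\star)$, I would then show that the dense colors must share a common partition of $V(G)$ into $r - 1$ balanced parts and that each dense color must coincide exactly with $T_{r-1}(n)$ on this partition: if some dense $G_i$ has an atypical edge, then the small rigid witness inside $H$ allows one to route the critical edge of $H$ through that atypical edge while embedding the rest of $H$ across the other dense colors. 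After this rigidity step, a standard exchange argument on the sparse colors shows that at the optimum either every color is an identical copy of $T_{r-1}(n)$, or the dense family is empty and there are exactly $h - 1$ sparse colors, each equal to $K_n$. Comparing the two resulting edge counts against $k$ matches the two regimes in \Cref{conj: conj}.

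The main obstacle will be the rigidity step. One must convert the local information supplied by $(\star)$ into a global structural statement asserting that several $T_{r-1}(n)$-close color classes are not just close to but actually equal to a common copy of $T_{r-1}(n)$. This reduces to an embedding question: given an atypical edge in one dense color, can one always embed $H$ by routing its critical edge through that atypical edge and embedding the remaining $h - 1$ edges across distinct dense colors on the common partition? Arranging $(\star)$ to be strong enough to guarantee such embeddings, while simultaneously keeping $(\star)$ satisfied by almost all $r$-color-critical graphs, is the delicate balance the proof must strike; this balance is precisely what fails in general for $r \ge 5$, explaining why one cannot treat \emph{all} color-critical graphs by this method as was possible for $r \le 4$.
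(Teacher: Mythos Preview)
Your proposal has a genuine gap at its very first step for part (i). You write that you would ``apply the stability version of Simonovits's theorem to each color $G_i$'' to obtain a dense/sparse dichotomy, with dense meaning $G_i$ is $o(n^2)$-close to $T_{r-1}(n)$. But Simonovits stability applies only to \emph{$H$-free} graphs, and an individual color class $G_i$ of a multicolored-$H$-free multigraph need not be $H$-free at all. Indeed, in the extremal construction $(h-1)K_n$ every color is a complete graph $K_n$, which is neither $H$-free nor close to $T_{r-1}(n)$, yet also not ``sparse'' in your sense since $e(K_n)>\ex(n,H)$. So your dense/sparse dichotomy is simply false, and the whole subsequent rigidity-and-exchange programme built on it collapses. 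A color-by-color analysis cannot get off the ground here because the forbidden object is a \emph{transversal}, not a monochromatic copy.

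The paper proceeds very differently. The structural condition it uses (the class $\calF_r$) is not about rigid copies of $K_r$ through critical edges, but about the \emph{color-reduced multigraph} $H_c$: one fixes a critical $r$-coloring of $H$ and records the between-class edge counts as multiplicities on $r$ vertices, and $\calF_r$ asks that these multiplicities are nearly balanced. The point is that a multicolored copy of $H_c$ in the regularity-reduced multigraph $R$ of $G$ forces a multicolored copy of $H$ in $G$ via a multicolor embedding lemma. One therefore reduces the entire stability question to a problem about simply $k$-colored \emph{$r$-vertex} multigraphs with no multicolored $H_c$, which is handled by an explicit greedy embedding (this is where the balance condition defining $\calF_r$ is used). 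Stability for $G$ then follows, and the exact structure is recovered by finding an $H$-friendly $(r-1)$-partite skeleton inside $G$ and propagating the partition to all vertices via the minimum-degree condition. None of this analyzes the colors $G_i$ individually; the global object is always the multigraph $G$ (or its reduced multigraph $R$), and the key device that makes the reduction work is passing to $H_c$.
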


\begin{remark}\label{rmk:ex-gph_stab_good-r-cc-gph_1}
    We also prove a stability result for the above two theorems. To see the precise meaning of `stability' in this context, refer to \Cref{lem:ex-gph_stab_cc-gph_with_very-good-cred-mgph}.
\end{remark}

To prove \Cref{thm:ex-gph_4-cc-gph,thm:ex-gph_almost-all_r-cc-gph}, we develop the ideas in \cite{keevash2004multicolour}. In order to overcome certain technical difficulties, we consider certain $r$-vertex $r$-color-critical `multi'graph $H_c$ instead of a simple graph \(H\). 
By understanding the multicolor Tur\'{a}n problem for such a multigraph $H_c$, we are able to deal with the reduced (multi)graphs obtained by applying regularity lemma to $G_1,\dots, G_k$ and deduce our desired results \Cref{thm:ex-gph_4-cc-gph,thm:ex-gph_almost-all_r-cc-gph}. See \Cref{sec:prelims} for a rough sketch of the proofs of \Cref{thm:ex-gph_4-cc-gph,thm:ex-gph_almost-all_r-cc-gph}. 

It was asked in \cite{keevash2004multicolour} to identify the class of the graphs $H$ that have only the two extremal constructions as above. 
\Cref{thm:ex-gph_4-cc-gph,thm:ex-gph_almost-all_r-cc-gph} together with the result in \cite{keevash2004multicolour} show that all $r$-color-critical graphs for $r\in \{3,4\}$ and almost all $r$-color critical graphs for $r\geq 5$ lie in the class. What about non-color-critical graphs? In fact, we are able to show that, this class does not contain any non-color-critical graph with chromatic number $r \geq 3$. In contrast with \cite[Theorem~1.1]{keevash2004multicolour}, the condition of \(n\) being sufficiently large compared to \(k\) in this theorem is necessary.

\begin{proposition}\label{thm:ex-gph_non-cc-gph}
    Let $H$ be a non-color-critical graph with $h$ edges and chromatic number at least \(3\). Then, for any $k \ge k^*=\frac{(r-1)(h-1)}{r-2}$ and sufficiently large $n$, we have  $$\ex_k(n,H) > \max\left(k \cdot \ex(n,H), (h-1)\binom{n}{2}\right).$$ 
\end{proposition}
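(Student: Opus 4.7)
\begin{proofsketch}

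The plan is to exhibit, for each $k \ge k^*$ and all sufficiently large $n$, one explicit $k$-colored $n$-vertex multigraph that contains no multicolored copy of $H$ yet has strictly more than $\max\bigl(k \cdot \ex(n,H),\ (h-1)\binom{n}{2}\bigr)$ edges. The structural input I would rely on is the standard reformulation of non-color-criticality: $H$ is not $r$-color-critical if and only if $\chi(H-e) = r$ for every edge $e \in E(H)$; equivalently, $H-e$ fails to embed into any $(r-1)$-partite graph.

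For the construction, I would take the hybrid multigraph $G$ whose $k$ colors are one copy of $K_n$ (color $1$) together with $k-1$ identical copies of the Tur\'an graph $T_{r-1}(n)$ (colors $2,\dots,k$). In a hypothetical multicolored $H$-copy, every edge that is a ``within-part'' edge of the Tur\'an partition must be assigned color $1$, since that is the only color containing such edges; by the distinct-colors condition, the copy can therefore use at most one within-part edge. If it uses none, all $h$ edges lie inside $T_{r-1}(n)$, so $H\subseteq T_{r-1}(n)$, contradicting $\chi(H)=r$. If it uses a unique within-part edge $e$, then the remaining $h-1$ between-part edges embed $H-e$ into $T_{r-1}(n)$, contradicting $\chi(H-e)=r$ via the reformulation of non-color-criticality. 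Hence $G$ has no multicolored $H$ and $\ex_k(n,H) \ge |E(G)| = \binom{n}{2} + (k-1)\,t_{r-1}(n)$.

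What remains is to check that this edge count beats both candidates in the maximum. Using $t_{r-1}(n)=\tfrac{r-2}{r-1}\binom{n}{2}-O(n)$, a direct computation shows that the threshold $k^*=\tfrac{(r-1)(h-1)}{r-2}$ is chosen exactly so that $|E(G)|-(h-1)\binom{n}{2}$ is asymptotically equal to $\tfrac{1}{r-1}\binom{n}{2}$, hence strictly positive for every $k\ge k^*$ and large $n$. For the comparison with $k\cdot\ex(n,H)$, setting $\delta := \ex(n,H) - t_{r-1}(n)$ yields $|E(G)| - k\cdot\ex(n,H) = \bigl(\binom{n}{2}-t_{r-1}(n)\bigr) - k\delta$; the first summand is $\Theta(n^2)$, while the Erd\H{o}s--Stone--Simonovits theorem gives $\delta = o(n^2)$, so for any fixed $k$ this expression is positive once $n$ is large in terms of $k$ and $H$.

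There is no substantive obstacle; the construction is essentially forced once non-color-criticality is properly reformulated, and both edge-count comparisons are elementary. The only mildly delicate point is that $n$ must be taken large relative to both $k$ and $H$, needed so that the $o(n^2)$-term in Erd\H{o}s--Stone--Simonovits is negligible against the $\Theta(n^2)$-gap $\binom{n}{2}-t_{r-1}(n)$; this is what forces the ``sufficiently large $n$'' qualification in the statement.

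\end{proofsketch}
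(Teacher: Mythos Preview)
Your proposal is correct and follows essentially the same approach as the paper: both use the hybrid construction of one copy of $K_n$ together with $k-1$ copies of $T_{r-1}(n)$, argue in the same way that a multicolored $H$ would force $H-e$ to be $(r-1)$-partite for some edge $e$, and then compare edge counts via Erd\H{o}s--Stone--Simonovits. The only cosmetic difference is that the paper first observes $k\cdot\ex(n,H) > (h-1)\binom{n}{2}$ for $k\ge k^*$ and large $n$, reducing to a single comparison, whereas you compare $e(G)$ against both quantities separately.
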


\paragraph{Organization.}
The rest of the paper proceeds as follows. We start by proving \Cref{thm:ex-gph_non-cc-gph} in \Cref{sec:construction_ex-gph_non-cc-gph}. We then give a few preliminaries along with a proof sketch of \Cref{thm:ex-gph_4-cc-gph,thm:ex-gph_almost-all_r-cc-gph} in \Cref{sec:prelims}. \Cref{thm:ex-gph_4-cc-gph,thm:ex-gph_almost-all_r-cc-gph} are proved through 
the next sections in the following way. In \Cref{sec:4-vtx-4-cc-mgph_good,sec:r-vtx-mgph_in_Fr_good}, we prove the results for an $r$-vertex $r$-color-critical `multi'graph. In other words, we respectively prove that all \(4\)-vertex \(4\)-color-critical multigraphs and most \(r\)-vertex \(r\)-color-critical multigraphs have two natural constructions as the multicolor extremal multigraphs. These results will be applied to regularity partition of $G_1,\dots, G_k$ to obtain approximate versions of \Cref{thm:ex-gph_4-cc-gph,thm:ex-gph_almost-all_r-cc-gph}. Appropriate stability versions of the results from \Cref{sec:4-vtx-4-cc-mgph_good,sec:r-vtx-mgph_in_Fr_good} will be proved in \Cref{sec:ex-gph_stab_good-mgph} and \Cref{sec:mcol-reg-lem}. 
 Finally, armed with everything we prove the exact statements of \Cref{thm:ex-gph_4-cc-gph,thm:ex-gph_almost-all_r-cc-gph} in \Cref{sec:very-good-cred-mgph=>very-good} along with their stability versions. In \Cref{subsec:most_r-cc-gphs_in_Fr}, we show some properties of most $r$-color-critical graphs which will be assumed before to prove \Cref{thm:ex-gph_almost-all_r-cc-gph}. We end with a few concluding remarks.

\section{Construction for non-color-critical graphs}\label{sec:construction_ex-gph_non-cc-gph}

\begin{proof}[Proof of \Cref{thm:ex-gph_non-cc-gph}]
Let $H$ be a non-color-critical graph with chromatic number $r$ for some $r \ge 3$. By Erd\H{o}s--Stone--Simonovits theorem, we know that $\ex(n,H) = t_{r-1}(n) + o(n^2)$. By simple computations, it is clear that for $k \ge k^*$ and sufficiently large $n$, we have that $k \cdot \ex(n,H) > (h-1)\binom{n}{2}$. Thus, to prove \Cref{thm:ex-gph_non-cc-gph}, it is enough to show that for $k \ge k^*$, we have that $\ex_k(n,H) > k \cdot \ex(n,H)$ for all sufficiently large $n$. Fix $k \ge k^*$. 

Consider the simply $k$-colored multigraph $G$ where all colors except one are equal to a fixed $T_{r-1}(n)$ and the final color is equal to $K_n$. We claim that this multigraph does not contain a multicolored copy of $H$. Suppose not, then we can embed a multicolor copy of $H$ in $G$ and fix such an embedding. It is clear that in the embedding, $H$ can have at most one edge outside of the fixed Tur\'an graph $T_{r-1}(n)$ (because there is only one color which contains edges outside of $T_{r-1}(n)$). Thus, there is an edge $e$ in $H$ such that $H \setminus e$ can be embedded in $T_{r-1}(n)$. Hence, $H \setminus e$ is an $(r-1)$-partite graph, contradicting to the fact that $H$ is a non-color-critical graph.

Finally, to finish the proof of \Cref{thm:ex-gph_non-cc-gph}, observe the following for sufficiently large $n$:
\[
    \ex_k(n,H) \ge e(G) = (k-1)\cdot t_{r-1}(n) + \binom{n}{2} > k\cdot \ex(n,H). \qedhere
\]
\end{proof}

\section{Preliminaries and necessary tools}
\label{sec:prelims}

Let $d_{r-1}(n)\defeq\delta(T_{r-1}(n))$ be the minimum degree of the Tur\'an graph $T_{r-1}(n)$.
We frequently use the following estimates: \(\frac{r-2}{r-1}(n-1) \le d_{r-1}(n)\le \frac{r-2}{r-1}\cdot n\) and \(\frac{r-2}{r-1}\binom{n}{2} < t_{r-1}(n)\le \frac{r-2}{r-1}\cdot \frac{n^2}{2}.\) 
We sometimes use the fact that \(e(H)\ge \binom{r}{2}\) for an \(r\)-color-critical multigraph \(H\), and in particular \(e(H)\ge 6\) if \(r\ge 4\). Throughout this paper, for brevity, we systematically avoid the floor and ceiling signs when they do not affect the underlying analysis.


	

\subsection{Notations}
In the following, \(n\), \(m\), \(r\), \(h\), and \(k\) always denote positive integers.
For a (multi)graph \(G\), let \(V(G)\) and \(E(G)\) denote the vertex set and the edge multiset of \(G\), respectively. We denote \(P(G)\defeq\binom{V(G)}{2}\). The multiplicity of an edge \(e\) in a multigraph $G$ is written as \(w_G(e)\) and the subscript will be omitted if the graph is clear from the context. For \(v\in V(G)\) and \(T\subseteq V(G)\), we write \(d_T(v)\defeq \sum_{u\in T} w_G(uv)\).


When we say a result holds if \(0<a\ll b,c\ll d<1\), it means that there exist non-decreasing functions \(f\) and \(g\) such that the result holds whenever \(b,c<f(d)\) and \(a<g(b,c)\). We will not compute those functions explicitly.

\subsection{Goodness and \texorpdfstring{\(\calF_r\)}{F\textrinferior}}
For the sake of convenience, we give a name to the property of having two natural constructions as the multicolor extremal multigraphs.

\begin{definition}
    We say a (multi)graph \(H\) with \(h\) edges and \(\chi(H)=r\) is \emph{good} if there exist \(n_0(H)>0\) such that the following holds for all \(k\ge h\), \(n\ge n_0(H)\), and $k^*=\frac{r-1}{r-2}(h-1)$. 
    If \(h\le k<k^*\), then an $n$-vertex $k$-color extremal multigraph of $H$ consists of exactly \(h-1\) non-empty colors, all of which are complete graphs \(K_n\), and if \(k\ge k^*\), then all colors of an $n$-vertex $k$-color extremal multigraph are the identical copies of an $H$-free graph.
\end{definition}

Then \Cref{conj: conj} says that for \(r\ge 3\), all the \(r\)-color-critical graphs are good. Also, \Cref{thm:ex-gph_non-cc-gph} says good graphs must be color-critical.
We denote the former extremal multigraph by \((h-1)K_n\). 
If \(H\) is a good \(r\)-color-critical graph with \(h\) edges, then $T_{r-1}(n)$ is the unique extremal graph of $H$, so we can denote the latter \(k\)-color extremal multigraph  by \(kT_{r-1}(n)\). 
It is easy to confirm that the latter one has more edges if and only if \(k\geq k^*= \frac{r-1}{r-2}(h-1)\) when \(n\) is sufficiently large.
For any \(r\)-color-critical (multi)graph \(H\) with \(h\) edges, we set \(k^*(H)=k^*(r,h)\defeq\frac{r-1}{r-2}(h-1)\). 

As mentioned in the introduction, we first study the problem for certain color-critical multigraphs.
To utilize this result to simple graph case, we make a connection between a $r$-color-critical graph $H$ and $r$-color-critical multigraph $H_c$ as follows. For an \(r\)-color-critical (multi-)graph \(H\), we call a proper coloring $f$ with color classes $V_1,\dots, V_r$ a \emph{critical} coloring if there exists two colors $\ell,\ell'$ such that $e(V_\ell, V_{\ell'})=1$.
For an \(r\)-color-critical (multi-)graph \(H\) and its critical coloring \(f\),
consider an \(r\)-vertex multigraph \(H^{f}\) whose vertices are the color classes \(V_1,\dots,V_r\) of \(f\) and the multiplicity of an edge \(V_iV_j\) is \(e(V_i,V_j)\) in \(H\) for each \(i\neq j\). 
By the choice of $f$, it is clear that $H^{f}$ is $r$-color-critical. 
We call \(H^{f}\) a \emph{color-reduced multigraph} of \(H\). 
Note that this choice depends on the choice of the coloring $f$.
For an \(r\)-color-critical (multi)graph \(H\), we write \(H_c\) to denote an $r$-color-critical multigraph $H^f$ where $\max\{ w_{H^f}(ij): ij\in P(H^f) \}$ is minimized over all choices of critical coloring of \(f\). If there are several choices of $f$ attaining the minimum, make an arbitrary choice.
This choice will be convenient to define $\mathcal{F}_r$ below.

We will show that for the \(4\)-color-critical graphs and most of the \(r\)-color-critical graphs $H$, \(r\ge 5\), their color-reduced multigraphs $H_c$ are also good. This fact together with the help of the multicolor version of the Szemer\'{e}di regularity lemma, 
we will prove that the original graphs $H$ are good as well. For this, we first establish a stability version of the corresponding color-reduced graph and then using an appropriate embedding lemma for multicolored regularity lemma to embed the targeted graph into a simply $k$-colored multigraph $G$ having more edges than the conjectured number.

In particular, for \(r\ge 5\), we prove the goodness for those graphs in a specific class called \(\calF_r\) which contains most of the \(r\)-color-critical graphs. An \(r\)-color-critical (multi)graph \(H\) with \(h\) edges is in \(\calF_r\) if it has an  $r$-color-critical color-reduced multigraph $H_c$ whose edge multiplicities are at most
\[
    \frac{2+2/r^2}{(r-1)(r-2)}(h-1)=\frac{1}{\binom{r-1}{2}}(h-1) + O\biggl(\frac{h}{r^4}\biggr).
\]
Note that a color-reduced multigraph \(H_c\) of \(H\in\calF_r\) is itself in \(\calF_r\). Roughly speaking, for $H\in \mathcal{F}_r$, the color-reduced multigraph \(H_c\) has balanced edge multiplicities, except for a few pairs including the pair having a critical edge. This allows to embed \(H_c\) in a certain \(r\)-vertex submultigraph \(G_0\) of a simply \(k\)-colored multigraph \(G\) with some lower bounds on edge multiplicities of \(G_0\). We will find such a \(G_0\) when \(G\) is multicolored-\(H\)-free and \(e(G)\ge\ex_k(|G|,H)\) but \(G\) does not have a desired structure, giving a contradiction.
Moreover, it is not difficult to show that almost all $r$-color-critical graphs belong to $\mathcal{F}_r$. We provide a proof sketch of the following proposition in \Cref{subsec:most_r-cc-gphs_in_Fr}.

\begin{proposition}\label{prop:almost all in Fr}
    For an integer $r$ and a real $\varepsilon>0$, there exists $s_0$ such that for all $s\geq s_0$ at least $(1-\varepsilon)$-fraction of all $s$-vertex $r$-color-critical graphs \(H\) on the vertex set \([s]\) are in \(\mathcal{F}_r\).
\end{proposition}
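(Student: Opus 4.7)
\begin{proofsketch}[Proof proposal]
The plan is to identify the typical structure of an $r$-color-critical graph on $[s]$, to exhibit a critical coloring for such graphs with small maximum color-reduced multiplicity, and to show that the remaining atypical graphs are negligibly few.

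Let $\Omega_s$ denote the set of $r$-color-critical graphs on $[s]$. Every $H\in\Omega_s$ can be written as $H=H_0+e$ for some critical edge $e$, where $\chi(H_0)=r-1$. By the Erd\H{o}s--Kleitman--Rothschild theorem and the Kolaitis--Pr\"omel--Rothschild-type structural results for random $K_r$-free graphs, the number of $(r-1)$-chromatic graphs on $[s]$ is $2^{t_{r-1}(s)(1+o(1))}$, and almost all such $H_0$ admit an essentially unique proper $(r-1)$-coloring with color classes $W_1,\dots,W_{r-1}$ of sizes $(1\pm o(1))\tfrac{s}{r-1}$ and cross-edge counts $e(W_i,W_j)=(1\pm o(1))\tfrac{h-1}{\binom{r-1}{2}}$ of order $\Theta(s^2)$. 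For such typical $H_0$, every valid critical edge $e=uv$ must lie inside some color class $W_{i_0}$, and a direct argument shows that $H$ has only this one critical edge, so the map $(H_0,e)\mapsto H_0+e$ is essentially bijective on typical pairs.

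Given a typical pair $(H_0,e)$, I would partition $W_{i_0}=A\sqcup(W_{i_0}\setminus A)$ with $u\in A$, $v\notin A$, and $|A|=\lfloor|W_{i_0}|/2\rfloor$, and take $f$ to be the resulting proper $r$-coloring of $H$. Since the only edge between $A$ and $W_{i_0}\setminus A$ is $e$, this $f$ is a critical coloring of $H$. The edge multiplicities in the color-reduced multigraph $H^f$ are $1$ (between $A$ and $W_{i_0}\setminus A$), $(1+o(1))\tfrac12 e(W_{i_0},W_j)$ (between $A$ or $W_{i_0}\setminus A$ and each other $W_j$), and $e(W_i,W_j)$ (for pairs disjoint from $W_{i_0}$). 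By the concentration in the typical structure, the maximum is $(1+o(1))\cdot\frac{2h}{(r-1)(r-2)}$, which is strictly smaller than the threshold $\frac{2+2/r^2}{(r-1)(r-2)}(h-1)$ once $s$ is large enough, because the $2/r^2$ slack is a fixed positive fraction while $h=\Theta(s^2)$. Hence $H\in\calF_r$.

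The main obstacle is making the structural dichotomy quantitative enough so that the atypical contribution to $\Omega_s$ is genuinely $o(1)$. The atypical $H$ are those for which $H_0$ is $(r-2)$-chromatic (contributing at most $2^{t_{r-2}(s)(1+o(1))}\cdot\binom{s}{2}=o(2^{t_{r-1}(s)})$ graphs, by the gap $t_{r-1}(s)-t_{r-2}(s)=\tfrac{1}{(r-1)(r-2)}\binom{s}{2}$), or $(r-1)$-chromatic but with unbalanced parts, unconcentrated cross-edge counts, or a non-unique coloring pattern (each an $o(1)$-fraction of $(r-1)$-chromatic graphs by standard random-graph concentration over the uniform $(r-1)$-partite model). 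A careful union bound over these atypical pairs $(H_0,e)$, using the essential bijectivity on typical pairs, then shows that the atypical $H$ form an $o(1)$-fraction of $\Omega_s$, so more than $(1-\varepsilon)|\Omega_s|$ of them lie in $\calF_r$ for $s$ sufficiently large.
\end{proofsketch}
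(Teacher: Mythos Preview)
Your approach is essentially the same as the paper's: both parameterize $r$-color-critical graphs as an $(r-1)$-partite graph plus one intra-class edge, use concentration (you via Kolaitis--Pr\"omel--Rothschild-type structure results as a black box, the paper via a direct random $(r-1)$-partite model $\GG(n_1,\dots,n_{r-1})$ with edge probability $1/2$) to control the cross-class edge counts, establish essential uniqueness of the critical edge and of the $(r-1)$-partition, and then verify the $\calF_r$ threshold for the resulting critical $r$-coloring. One small slip: your atypical case ``$H_0$ is $(r-2)$-chromatic'' is vacuous, since removing a critical edge from $H$ forces $\chi(H_0)=r-1$ exactly; also, the half-split of $W_{i_0}$ is harmless but unnecessary, as the maximum multiplicity in $H^f$ is already attained at a pair $W_i,W_j$ with $i,j\neq i_0$ regardless of how you split.
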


\subsection{Minimum degree condition of the host graph}
In many places, it is convenient to assume a minimum degree condition of the host graph \(G\). 
The following proposition allows us to assume such a minimum degree condition. We supply the proof in \Cref{appdx:(prop:min-deg-condition)-proof}.

\begin{proposition}\label{prop:min-deg-condition}
    Let $r\geq 3$, $k\geq 1$, and \(H\) be an $r$-color-critical (multi)graph with $h$ edges. For each $n\in\mathbb{N}$, let
\[
    A(n)\defeq\begin{cases*}
        (h-1)K_n & if \(h\le k<k^*(H)\), \\
        kT_{r-1}(n) & if \(h\ge k^*(H)\).
    \end{cases*}
\]
Suppose there is an \(M_0>0\) such that for all \(n>M_0\), this
$A(n)$ is the unique $n$-vertex simply \(k\)-colored extremal multigraph with at least $e(A(n))$ edges and minimum degree $\delta(A(n))$. Then there exists an \(n_0=n_0(M_0,k)>0\) such that for all \(n>n_0\), the multigraph \(A(n)\) is the unique $n$-vertex $k$-extremal multigraph of $H$.
\end{proposition}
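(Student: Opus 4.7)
The plan is to argue by iteratively deleting low-degree vertices and invoking the hypothesis of the proposition on successively smaller vertex counts. The engine of the reduction would be the identity
\[
    e(A(n)) - \delta(A(n)) = e(A(n-1)),
\]
which I would verify as a warmup: when $h \leq k < k^*(H)$ both sides equal $(h-1)\binom{n-1}{2}$, and when $k \geq k^*(H)$ it reduces to $t_{r-1}(n) - \delta(T_{r-1}(n)) = t_{r-1}(n-1)$, which holds because deleting a vertex from a largest part of $T_{r-1}(n)$ produces $T_{r-1}(n-1)$ and that vertex has degree $\delta(T_{r-1}(n))$.

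Fix a threshold $n_0 = n_0(M_0, k)$ to be chosen below, and take $n > n_0$. I would assume for contradiction that some simply $k$-colored multicolored-$H$-free multigraph $G$ on $n$ vertices violates the conclusion, so $e(G) \geq e(A(n))$ and either $e(G) > e(A(n))$ or $G \not\cong A(n)$. Starting from $G_0 \defeq G$, I would build a sequence $G_0, G_1, G_2, \ldots$ of simply $k$-colored multicolored-$H$-free multigraphs, with $G_i$ on $n-i$ vertices, maintaining the invariant
\[
    e(G_i) \geq e(A(n-i)) + c_i
\]
for integers $c_0 = 0$ and $c_i \geq 1$ for all $i \geq 1$.

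At step $i$ I consider two alternatives. If $n - i > M_0$ and $\delta(G_i) \geq \delta(A(n-i))$, the hypothesis of the proposition forces $G_i \cong A(n-i)$ with $e(G_i) = e(A(n-i))$; this contradicts the invariant when $i \geq 1$, and for $i = 0$ it contradicts the assumed deviation of $G$ from $A(n)$, so in either case we are done. Otherwise, some vertex $v \in V(G_i)$ has $d_{G_i}(v) \leq \delta(A(n-i)) - 1$, and setting $G_{i+1} \defeq G_i - v$ yields
\[
    e(G_{i+1}) \geq e(G_i) - (\delta(A(n-i)) - 1) \geq e(A(n-i-1)) + c_i + 1
\]
by the identity, which preserves the invariant with $c_{i+1} = c_i + 1 \geq 1$, so the iteration continues.

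If the iteration never terminates via the first alternative before $n - i$ drops to $M_0 + 1$, then $G_{n - M_0 - 1}$ lives on $M_0 + 1$ vertices with $e(G_{n-M_0-1}) \geq c_{n-M_0-1} = n - M_0 - 1$, while trivially $e(G_{n-M_0-1}) \leq k \binom{M_0+1}{2}$. This would force $n \leq k\binom{M_0+1}{2} + M_0 + 1$, so choosing $n_0$ larger than this explicit bound rules the possibility out and ensures that the iteration must terminate via the first alternative, producing the desired contradiction. The proof is essentially bookkeeping on top of the descent; the only nonroutine point is the identity $e(A(n)) - \delta(A(n)) = e(A(n-1))$, but that admits a direct verification in both regimes and is the unique conceptual ingredient driving the inductive reduction.
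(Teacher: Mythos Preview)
Your proof is correct and rests on the same idea as the paper's: the identity \(e(A(n))-\delta(A(n))=e(A(n-1))\) forces the excess \(e(G_i)-e(A(n-i))\) to strictly increase each time a low-degree vertex is deleted, which cannot continue indefinitely.

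The only difference is packaging. The paper invokes Simonovits's \emph{progressive induction} lemma as a black box, applied to the family of extremal multigraphs with deficit function \(f(n)=\ex_k(n,H)-e(A(n))\); you instead unroll the descent by hand. Your direct argument is slightly more elementary and yields a much better threshold \(n_0=k\binom{M_0+1}{2}+M_0+1\), whereas the paper's use of the general lemma gives \(n_0=2^{(kM_0^2)^2}M_0\). The paper's abstraction would adapt more readily if the excess grew by a variable amount per step, but for this specific statement your explicit bound is the cleaner outcome.
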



The minimum degree condition obtained from the previous proposition helps us to find a vertex set whose induced graph contains edges with high multiplicities. Such an induced subgraph is useful to build a multicolored copy of \(H\). The following proposition allows us to find a vertex having many edges towards a fixed vertex set. Repeating this proposition yields a desired vertex set containing many edges with high multiplicities.

\begin{proposition}\label{prop:min-deg=>vtx_large-d_T(v)}
    Suppose \(0<\frac{1}{n}\ll \delta\ll \frac{1}{d},\frac{1}{t},\frac{1}{k}<1\).
    Suppose \(G\) is a simply \(k\)-colored multigraph of order \(n\) with \(\delta(G)\ge(1-\delta)d(n-1)\), and \(T\subseteq V(G)\) is a nonempty vertex set of order \(t\). Then there is a vertex \(v\in V(G)-T\) such that \(d_T(v)\ge dt\).
\end{proposition}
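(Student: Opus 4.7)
The plan is a standard double counting plus averaging argument, closed off by the integrality of $d_T(v)$. First I would lower-bound the total weighted degree between $T$ and $V(G)-T$. Summing the minimum degree condition over $u\in T$ gives $\sum_{u\in T} d(u)\ge t(1-\delta)d(n-1)$. On the other hand, since each of the $k$ colors is a simple graph and contributes at most $\binom{t}{2}$ edges to $G[T]$, the within-$T$ contribution is bounded by
\[
    \sum_{u\in T} d_T(u) \;=\; 2\!\!\sum_{e\in E(G[T])}\!\! w_G(e) \;\le\; 2k\binom{t}{2} \;=\; kt(t-1).
\]
Subtracting and applying the double-counting identity $\sum_{v\in V(G)-T} d_T(v) = \sum_{u\in T} d_{V(G)-T}(u)$ yields
\[
    \sum_{v\in V(G)-T} d_T(v) \;\ge\; t(1-\delta)d(n-1) - kt(t-1).
\]

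Averaging over the $n-t$ vertices of $V(G)-T$ then produces some $v$ with $d_T(v)$ at least the right-hand side divided by $n-t$. A short algebraic manipulation reduces the desired strict inequality $d_T(v) > dt-1$ to
\[
    n-t \;>\; t(t-1)(k-d) + td\delta(n-1).
\]
Under the hierarchy $1/n\ll\delta\ll 1/d,1/t,1/k$, I would pick the hidden $\ll$-functions so that $td\delta < 1/100$, which makes $td\delta(n-1) < n/100$, and so that $n \gg t^2 k$, which makes $t(t-1)(k-d) < n/100$; the inequality then holds with plenty of room. Because $d_T(v)$ is a non-negative integer, $d_T(v) > dt-1$ upgrades to $d_T(v)\ge dt$, finishing the proof.

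The only delicate point I anticipate is simultaneously controlling the two error terms: the $\delta$-driven slack $td\delta(n-1)$ is linear in $n$ and so must be tamed by choosing $\delta$ small in terms of $1/(dt)$, while the constant combinatorial slack $t(t-1)(k-d)$ dies only by taking $n$ large relative to $t^2k$. The $\ll$-convention laid out earlier in the paper is precisely designed to make such a choice of functions routine, so the verification boils down to bookkeeping rather than any substantive new idea.
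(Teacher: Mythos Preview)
Your proposal is correct and essentially identical to the paper's own proof: the paper also sums degrees over $T$, subtracts the within-$T$ contribution bounded by $k t(t-1)$, averages over $V(G)-T$, and concludes $d_T(v)>dt-1$. You are simply more explicit about the algebra and the final integrality step, but the substance is the same double-counting argument.
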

\begin{proof}
    We have that \(e(T,V(G)-T) = \sum_{v\in T}d(v)-2e(T) \ge t(1-\delta)d(n-1)-kt(t-1)\). Thus there is a vertex \(v\in V(G)-T\) such that \(d_T(v)\ge  \frac{1}{n-t}( t(1-\delta)d(n-1)-kt(t-1) )  > dt-1\).
\end{proof}

\subsection{Nested colorings}
We say that a simple \(k\)-coloring is \emph{nested} if its colors form a chain under inclusion. It is easy to see that the analogue of \cite[Proposition~2.1]{keevash2004multicolour} holds with the same proof even if we rather consider a multigraph \(H\) instead of a simple graph.

\begin{proposition}\label{prop:reform_to_nested-G}
    Suppose \(G\) is a simply \(k\)-colored multigraph, and \(G\) does not contain a multicolored (multi)graph \(H\). Then there exists a simply \(k\)-nested-colored multigraph \(F\) on the same vertex set as \(G\) such that
    \begin{enumerate}
        \item \(F\) and \(G\) have the same edge set as multigraphs, and
        \item \(F\) contains no multicolored \(H\).
    \end{enumerate}
\end{proposition}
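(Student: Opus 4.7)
The plan is to mimic the standard compression argument used for simple \(H\) in \cite{keevash2004multicolour}, and check that it carries over to multigraph \(H\). Given a simple \(k\)-coloring \(G_1,\dots,G_k\) of \(G\), I would define a \emph{compression} on an ordered pair of colors \((G_i,G_j)\) that replaces them by \(G_i'\defeq G_i\cup G_j\) and \(G_j'\defeq G_i\cap G_j\), where the union and intersection are taken as subsets of \(\binom{V(G)}{2}\). Three things then need to be verified: (a) the result is still a simple \(k\)-coloring of the same multigraph \(G\); (b) no multicolored copy of \(H\) is created; (c) iterating compressions reaches a nested coloring.

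For (a), each pair \(uv\in\binom{V(G)}{2}\) lies in the same number of colors before and after the compression, since being in both \(G_i\) and \(G_j\), in exactly one, or in neither maps to being in both \(G_i',G_j'\), in exactly \(G_i'\), or in neither, respectively. Hence the edge multiset of \(G\) is preserved and each of \(G_i',G_j'\) remains simple.

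The heart of the argument is (b). Suppose after compression there were a multicolored copy of \(H\); let \(e_i,e_j\) be the edges of \(H\) assigned to colors \(i,j\), with images \(f_i\in G_i'\) and \(f_j\in G_j'\). Then \(f_j\in G_i\cap G_j\) and \(f_i\in G_i\cup G_j\). If \(f_i\in G_i\), then the assignment \(e_i\mapsto i,\ e_j\mapsto j\) already realizes a multicolored copy in the original coloring. Otherwise \(f_i\in G_j\), and the swapped assignment \(e_i\mapsto j,\ e_j\mapsto i\) works. The one subtlety, arising because \(H\) is now a multigraph, is that \(f_i\) and \(f_j\) might coincide as positions in \(G\); but \(f_j\in G_i\cap G_j\) forces the underlying edge to have multiplicity at least two in the original \(G\), so distinct edge copies are available for \(e_i\) and \(e_j\), while all other edges of \(H\) keep their assignments to colors in \([k]\setminus\{i,j\}\). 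Either way we contradict multicolored-\(H\)-freeness of the original coloring.

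For (c), I would use the potential \(\Phi\defeq\sum_{i=1}^k e(G_i)^2\), which is bounded above by \(k\binom{n}{2}^2\). A short computation gives that a compression on \((G_i,G_j)\) changes \(\Phi\) by \(2\bigl(e(G_i\cup G_j)-e(G_i)\bigr)\bigl(e(G_i)-e(G_i\cap G_j)\bigr)\ge 0\), with equality exactly when one of \(G_i,G_j\) contains the other. So long as some two colors are incomparable under inclusion a compression strictly increases \(\Phi\), and since \(\Phi\) takes only finitely many integer values the process terminates at a coloring in which every two colors are nested, yielding the desired \(F\). The main potential pitfall is exactly the case analysis in (b) for multigraph \(H\), but once one distinguishes whether \(f_i\) lies in \(G_i\) or in \(G_j\), the swap argument goes through cleanly.
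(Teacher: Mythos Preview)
Your proposal is correct and follows exactly the approach the paper indicates: repeatedly replace two incomparable colors \(G_i,G_j\) by \(G_i\cup G_j\) and \(G_i\cap G_j\), verifying that no multicolored \(H\) is created and using a potential to guarantee termination. Your write-up is in fact more detailed than the paper's one-line sketch (which simply cites \cite{keevash2004multicolour}); the only omissions are the trivial cases where at most one of the colors \(i,j\) is used in the hypothetical multicolored copy, and you may note that the ``subtlety'' you flag cannot actually occur in the swap case, since \(f_i=f_j\in G_i\cap G_j\) would already place \(f_i\in G_i\).
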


The proof constructs \(F\) by applying the transformation to \(G\) finitely many times, which chooses two non-nested colors \(G_i\) and \(G_j\), and replace them by \(G_i\cap G_j\) and \(G_i\cup G_j\).
From this proposition, it is easy to see that for our purpose to prove that two natural constructions for $k$-extremal multigraphs cover all possibilities, it is enough to consider the nested colorings.
Especially, for a given color set \([k]\), we can assume that any edge of multiplicity \(s\) has colors exactly \([s]\), and we will use such assumptions in \Cref{sec:mcol-reg-lem} and \Cref{sec:very-good-cred-mgph=>very-good}.





\subsection{The condition for embedding a multicolored multigraph}
For two multigraphs \(H\) and \(G\), we say that an injective map \(\phi:V(H)\rightarrow V(G)\) is an \emph{embedding} if \(w_G(\phi(x)\phi(y)) \geq w_H(xy)\) for all \(xy \in P(H)\).
The following proposition allows us to  find a multicolored copy of a given multigraph.
This is a consequence of Hall's theorem, so we omit the proof.
\begin{proposition}\label{prop:proper-edge-emb-orders}
    Let \(G\) be a simply \(k\)-colored multigraph and \(H\) be an \(h\)-edge multigraph. Suppose there is an embedding \(\phi\colon H\hookrightarrow G\) which also gives \(P(H)\hookrightarrow P(G)\). Then \(\phi(H)\) yields a multicolored copy of \(H\) if there is an enumeration \((e_1,\dots, e_{h})\) of \(P(H)\) such that for each \(1\le j\le h\),
    \begin{equation}\label{eq:proper-edge-emb-orders_condition}
        \sum_{i=1}^{j} w_H(e_i) \le w_G(\phi(e_j)).
    \end{equation}
\end{proposition}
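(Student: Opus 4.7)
The plan is to encode the task as a bipartite matching problem and apply Hall's theorem. Form a bipartite graph $B$ whose left side $X$ consists of \emph{edge copies} of $H$ (where each $e\in P(H)$ contributes $w_H(e)$ vertices, so $|X|=h$), and whose right side is the color set $[k]$. An edge copy corresponding to $e\in P(H)$ is joined in $B$ to a color $c\in[k]$ exactly when $c$ is one of the colors appearing at $\phi(e)$ in the simple $k$-coloring of $G$. A matching of $B$ saturating $X$ is precisely an assignment of pairwise distinct colors to all edge copies so that each copy is sent to a color present at its image in $G$, which is exactly a multicolored copy of $H$ in $G$ via $\phi$.

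So it suffices to verify Hall's condition in $B$. Let $S\subseteq X$ be arbitrary, and let $T\subseteq P(H)$ be the set of those pair-edges of $H$ which contribute at least one copy to $S$. Then trivially $|S|\le\sum_{e\in T}w_H(e)$, and $N_B(S)=\bigcup_{e\in T}c_G(\phi(e))$, where $c_G(\phi(e))\subseteq[k]$ denotes the set of colors of $\phi(e)$ in $G$. Let $j$ be the largest index for which $e_j\in T$, so that $T\subseteq\{e_1,\dots,e_j\}$ and in particular $N_B(S)\supseteq c_G(\phi(e_j))$. The hypothesis \eqref{eq:proper-edge-emb-orders_condition} then gives
\[
    |S|\le\sum_{e\in T}w_H(e)\le\sum_{i=1}^{j}w_H(e_i)\le w_G(\phi(e_j))=|c_G(\phi(e_j))|\le|N_B(S)|.
\]
Hence Hall's condition holds for $B$, a matching saturating $X$ exists, and the induced assignment produces the desired multicolored copy of $H$.

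There is essentially no obstacle beyond correctly setting up the bipartite graph; the one small point worth noting is that since the coloring of $G$ is not assumed to be nested, one cannot name the specific colors appearing at $\phi(e_j)$, but our bound only uses the cardinality of $c_G(\phi(e_j))$, which is exactly $w_G(\phi(e_j))$, so the argument goes through uniformly.
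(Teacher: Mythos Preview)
Your proof is correct and matches the paper's approach exactly: the paper merely states that the proposition ``is a consequence of Hall's theorem'' and omits the proof, and you have supplied precisely the intended Hall argument. The bipartite model, the reduction of an arbitrary \(S\) to the maximal index \(j\) with \(e_j\in T\), and the chain of inequalities are all the natural way to fill in the omitted details.
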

Note that the `only if' direction also holds if the simple $k$-coloring of $G$ is nested.
When we want to argue that \(G\) contains a multicolored \(H\), we aim to find an embedding \(\phi\colon H\hookrightarrow G\) and also an enumeration of \(P(H)\) which satisfy \eqref{eq:proper-edge-emb-orders_condition}. Note that among the embedding \(\phi\), the enumeration of \(P(H)\), and the corresponding enumeration of \(P(\phi(H))\), two of them determine the other. We sometimes call the enumerations the \emph{edge embedding orders}, and call them \emph{proper} if they satisfy \eqref{eq:proper-edge-emb-orders_condition}.

\subsection{\texorpdfstring{\(H\)}{H}-friendly submultigraphs}
When \(k\geq k^*(H)\), we want to prove that our host multigraph \(G\) is \((r-1)\)-partite. For this, we seek to find some `skeleton', which is an \((r-1)\)-partite submultigraph \(K\) of \(G\) containing a copy of \(H-\{x\}\) for some vertex \(x\in V(H)\). By analyzing how a vertex \(v\in V(G)\) and \(K\) interact, we can either obtain a multicolored copy of \(H\) within \(K\cup \{v\}\) or determine which part of the \((r-1)\)-partition does \(v\) belong to. The following concept of \emph{\(H\)-friendly submultigraph} provides such a skeleton structure that we need.

\begin{definition}
    Consider an \(r\)-color-critical (multi)graph \(H\). We say an \(a(r-1)\)-vertex simply \(k\)-colored \((r-1)\)-partite multigraph \(K\) with a vertex partition \(W_1,\dots, W_{r-1}\) of equal sizes is \emph{\(H\)-friendly} if \(K'\) obtained as follows always contains a multicolored \(H\): add a new vertex \(v\) to \(K\) and add edges of multiplicity at most \(k\) incident to \(v\) so that \(d(v)\ge \max\{(r-2)ak,(r-1)a(h-1)\} = (r-2)a\cdot \max\{k,k^*\}\) and \(d_{W_i}(v)\ge 1\) for all \(i\in [r-1]\).
\end{definition}

For example,  for $|H|=m$, a complete \((r-1)\)-partite graph \(h K_{m,\dots, m}\) is an \(H\)-friendly multigraph. Indeed, consider a critical edge \(xy\) of \(H\), and assume \(d_H(x)\leq d_H(y)\). Then it is easy to check that there exists a multicolored copy of \(H\) within \(K'=K\cup \{v\}\) as above where \(v\) plays the role of \(x\).

One thing to note is that the definition of \(H\)-friendliness depends on the choice of \((r-1)\)-partition \((W_1,\dots, W_{r-1})\) of \(K\). However, all \(H\)-friendly graphs we deal with in this paper will be complete \((r-1)\)-partite graphs with certain edge-multiplicities, which has the unique \((r-1)\)-partition.
Hence, we will not specify this vertex partition and just mention that such a multigraph \(K\) is \(H\)-friendly.

The following lemma states that the existence of an $H$-friendly submultigraph of $G$ implies the desired global structure of $G$.

\begin{lemma}\label{lem:H-friendly-submgph=>(r-1)-partite}
    Suppose  \(0\ll\frac{1}{n}\ll\delta\ll \frac{1}{a}, \frac{1}{m}, \frac{1}{k}\le 1\) and \(r\ge4\).
    Let \(H\) be an \(m\)-vertex \(r\)-color-critical multigraph with \(h\) edges and $k\geq k^*(H)$. Let \(G\) be a simply \(k\)-colored multicolored-\(H\)-free multigraph of order \(n\) with \(\delta(G)\ge(1-\delta)kd_{r-1}(n)\).
    
    If \(G\) contains an \(a(r-1)\)-vertex \(H\)-friendly multigraph \(K\) as an induced subgraph, then \(G\) is (\(r-1\))-partite.
\end{lemma}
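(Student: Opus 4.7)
The strategy is to construct an explicit $(r-1)$-partition of $V(G)$ extending the partition $W_1,\dots,W_{r-1}$ of $K$. For each $v\in V(G)\setminus V(K)$, set $J(v)\defeq\{i\in[r-1]:d_{W_i}(v)=0\}$ and place $v$ into some $V_j$ with $j\in J(v)$; each $u\in W_i$ goes into $V_i$. The lemma then reduces to two claims: \textbf{(a)} $J(v)\neq\emptyset$ for every $v\notin V(K)$, and \textbf{(b)} each $V_i$ is an independent set in $G$.

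The proof of (a) proceeds by contradiction. Suppose some $v\notin V(K)$ has $d_{W_i}(v)\ge 1$ for every $i\in[r-1]$. If additionally $d_K(v)\ge(r-2)ak$, then $H$-friendliness of $K$ applied to $K+v$ immediately produces a multicolored copy of $H$ in $G$, contradicting multicolored-$H$-freeness. The remaining case $d_K(v)<(r-2)ak$ forces $v$ to have very large degree into $V(G)\setminus V(K)$ by the hypothesis $\delta(G)\ge(1-\delta)kd_{r-1}(n)$. The plan then is to reconstruct an alternative $H$-friendly substructure of $G$ inside $v$'s exterior neighborhood --- concretely, a complete balanced $(r-1)$-partite submultigraph $K^\star=kK_{a^\star,\dots,a^\star}$ with $a^\star\ge a$ to which $v$ is sufficiently attached --- by iteratively applying \Cref{prop:min-deg=>vtx_large-d_T(v)} inside $N(v)\setminus V(K)$. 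Such a balanced complete multipartite multigraph is $H$-friendly (as noted after the definition of $H$-friendly), and $v$ meets both the degree and part-hitting conditions for $K^\star$ by construction, so $H$-friendliness of $K^\star$ again produces a multicolored $H$, a contradiction.

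For (b), the cases $u,w\in W_i$ and $u\in W_i,\,w\in V_i\setminus W_i$ are immediate from the definitions of $K$ being $(r-1)$-partite and of $V_i$. In the only nontrivial case $u,w\in V_i\setminus W_i$ with $uw\in E(G)$, both vertices satisfy $d_{W_i}(u)=d_{W_i}(w)=0$; I would use the same extraction technique as in (a) to build an $H$-friendly complete balanced multipartite submultigraph $K'$ of $G$ whose part in position $i$ contains $u$, and then rerun the argument of (a) on $w$ with respect to $K'$. The edge $uw$ ensures that $w$ has positive degree to $K'$'s $i$-th part, so $w$ meets every part of $K'$, and the extraction again provides the required degree bound, producing a multicolored $H$ and a contradiction.

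The main obstacle is the subcase of (a) where $d_K(v)<(r-2)ak$: the minimum-degree condition only gives $d_K(v)$ close to $(r-2)ak$ on average over $v\notin V(K)$ (via $\sum_{v\notin V(K)}d_K(v)\ge a(r-1)(1-\delta)kd_{r-1}(n)-O(a^2k)$), and leaves room for individual vertices with much smaller $d_K(v)$. The technical heart of the proof is therefore the construction of the auxiliary complete balanced multipartite $H$-friendly substructure $K^\star$ inside $v$'s exterior neighborhood via greedy applications of \Cref{prop:min-deg=>vtx_large-d_T(v)}; once this extraction is in place, both (a) and (b) follow by applying $H$-friendliness to the reconstructed structure.
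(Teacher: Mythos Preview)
Your overall strategy---partition $V(G)$ according to which $W_i$ a vertex avoids, then verify independence of the parts---is the right shape, and you have correctly located the main obstacle. But the proposed resolution of that obstacle does not work.

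The gap is in the construction of $K^\star$. You claim that iterated applications of \Cref{prop:min-deg=>vtx_large-d_T(v)} inside $v$'s exterior neighborhood yield a copy of $kK_{a^\star,\dots,a^\star}$. They do not: that proposition returns a single vertex with large \emph{total} degree $d_T(\cdot)\ge dt$ to a prescribed set $T$, with $d\approx \frac{r-2}{r-1}k$. It gives no control over individual edge multiplicities, so after any number of iterations you only obtain vertices $u_1,u_2,\dots$ satisfying cumulative degree inequalities of the form $\sum_{i<j} w(u_iu_j)\ge (j-1)\frac{r-2}{r-1}k$, never the pointwise condition $w(u_iu_j)=k$ that a copy of $kK_{a^\star,\dots,a^\star}$ requires. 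Likewise, building $K^\star$ ``inside $N(v)$'' only guarantees $w(vu)\ge 1$ for $u\in V(K^\star)$, far short of the $H$-friendliness requirement $d_{K^\star}(v)\ge (r-2)a^\star k$. The same issue recurs in your treatment of~(b), which relies on the same extraction.

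The paper's proof does not attempt to handle a bad vertex $v$ in isolation. It first uses a double-counting argument on $e(K,V\setminus V(K))$ to show that the set $A=\{v:d_K(v)<(r-2)ak\}$ has size at most $\delta^{1/2}n$; every vertex outside $A$ then has the \emph{exact} profile $d_{W_i}(v)=0$ for one $i$ and $d_{W_j}(v)=ak$ for all $j\neq i$, which immediately yields an $(r-1)$-partition $(V_1,\dots,V_{r-1})$ of $V\setminus A$ with each $V_i\cup W_i$ independent (via swapping a vertex of $V_i$ into $K$ to get a new $H$-friendly configuration). Only \emph{after} this near-complete balanced partition is in hand does the paper treat the exceptional vertices of $A$: for such a $v$ it uses the already-established parts $U_i$ to locate large sets $M_i\subseteq U_i$ with $w(vu)\ge k/2$, and then exploits the balanced partition (not merely the minimum degree) to greedily build sets $W'_1,\dots,W'_{r-1}$ of size $m$ with every cross edge of multiplicity $k$. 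The partition is the scaffold that makes the multiplicity-$k$ structure findable; without it, the minimum-degree hypothesis alone is too weak to produce $K^\star$.
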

    
\begin{proof}

    Write \(V\defeq V(G)\) and let \(W_1,\dots,W_{r-1}\) be a vertex partition of \(K\) making it \(H\)-friendly, with \(|W_i|=a\) for each \(i\).
    
    Suppose \(v\in V-V(K)\) satisfies \(\sum_{i=1}^{r-1} d_{W_i}(v)\ge (r-2)ak\). Then, as \(K\) is \(H\)-friendly and \(G\) contains no multicolored copy of \(H\), we have \(\min\{ d_{W_i}(v):1\le i\le r-1\}=0\). 
    In particular, since every edge has multiplicity at most $k$, this implies that $ d_{K}(v)\leq  (r-2)ak$ for all $i\in [r-1]$ and $v\in V-V(K)$.
    Let \(A\defeq\{v\in V-V(K):d_K(v)<(r-2)ak\}\). As $d_K(v)=(r-2)ak$ for all $v\in V-A-K$, we have
    \[
        \delta(G)|K|-k\cdot 2\binom{r-1}{2}a^2
        \le e(K,V-V(K))
        \le ((r-2)ak-1)|A|+(r-2)ak(n-|K|-|A|),
    \]
    which gives \(|A|\le (r-2)ak(\delta(n-1)+1)<\delta^{1/2}n\) by \(|K|=a(r-1)\) and the minimum degree condition on \(G\). The set \(V-A\) is partitioned into \((V_1,\dots, V_{r-1})\) where
    \[
        V_i\defeq\{v\in V-V(K)-A : d_{W_i}(v)=0,\ d_{W_j}(v)=ak\ \forall j\neq i\}. 
    \]
    We can check that for any \(v\in V_i\) and \(w\in W_i\), the submultigraph \(G[(V(K)\setminus\{w\})\cup \{v\}]\) is \(H\)-friendly. Indeed, as \(W_i\) is an independent set of \(K\), the degree sequence of \(G[(V(K)\setminus\{w\})\cup \{v\}]\) dominates that of \(K\), so it is \(H\)-friendly.
    As \(K\) is an induced subgraph of \(G\), each \(W_i\) is an independent set of \(G\). Thus, if there is an edge \(ab\) in \(V_i \cup W_i\), then one of \(a,b\) is in \(V_i\) and \(G[V(K)\cup\{a,b\}]\) contains a multicolored \(H\), a contradiction.
    Thus each \(V_i\cup W_i\) is an independent set.
    
    Now we show that we can place the vertices in \(A\) into one of the independent sets one by one while keeping them independent to conclude that \(G\) is (\(r-1\))-partite.
	Suppose to the contrary that at some step we have independent sets \(\{U_i\supseteq V_i\cup W_i: 1\le i\le r-1\}\) and there is \(v\in V-\bigcup_{i\in [r-1]}U_i\) such that \(d_{U_i}(v)\ge 1\) for each \(i\) so that we cannot place \(v\) in any of the \(U_i\)'s. Fix such a vertex \(v\) and let \(U\defeq\bigcup_{i\in [r-1]}U_i\).
	Note that we have \(|V-U|\le |A|\le \delta^{1/2}n\).
	
	If \(\bigl||U_i|-|U_j|\bigr|\ge 4\delta^{1/4}n\) for some \(i\neq j\), then
	\begin{align*}
		e(G)
		&\le e(U)+\sum_{v\in V-U}d(v) \le \sum_{i\neq j}|U_i||U_j| + \sum_{v\in V-U}d(v) 
		\\&\le \bigl(kt_{r-1}(|U|)-k(2\delta^{1/4}n)^2\bigr) + kn|V-U|
		< (1-\delta) kt_{r-1}(n),
	\end{align*}
	a contradiction as $\delta(G)\geq (1-\delta)k d_{r-1}(n)$. Thus \(\bigl||U_i|-\frac{|U|}{r-1}\bigr|<4\delta^{1/4}n\) for all \(i\), giving
	\[
	    \biggl||U_i|-\frac{n}{r-1}\biggr|<4\delta^{1/4}n+|V-U|<\delta^{1/5}n.
	\]
	
    Without loss of generality, let \(d_{U_1}(v)=\min\{d_{U_i}(v):1\le i\le r-1\}\). For each \(2\leq i\leq r-1\), let
    \[
        M_i\defeq\{u\in U_i : w(uv)\ge k/2\}\subseteq U_i
    \]
    
    \begin{claim}\label{cl: Mi size}
        For every \(2\leq i\leq r-1\) we have \(|M_i|>n/(4k^3)\).
    \end{claim}
    \begin{claimproof}
        Suppose that \Cref{cl: Mi size} does not hold for some \(2\leq i\leq r-1\). Then, since $r\leq h\leq k$, we have
    	\begin{align*}
    		d_{U_1}(v)
    		\le d_{U_i}(v)
    		&\le k|M_i| + \frac{k-1}{2}|U_i-M_i|
    		= \frac{k-1}{2}|U_i| + \frac{k+1}{2}|M_i|
    		\\&< \frac{k-1}{2}\biggl(\frac{n}{r-1}+\delta^{1/5}n\biggr) + \frac{k+1}{2}\cdot\frac{n}{4k^3}
    		\\&< \frac{k-1/8}{2(r-1)}n,
    	\end{align*}
    	which yields a contradiction, as the assumed minimality of \(d_{U_1}(v)\) yields
    	\begin{align*}
    		d(v)
    		&\le k(|V-U_1-U_i|)+d_{U_1}(v)+d_{U_i}(v)
    		\\&< k\biggl(n-2\biggl(\frac{n}{r-1}-\delta^{1/5}n\biggr)\biggr)
    		    + 2\cdot \frac{k-1/8}{2(r-1)}n 
    		\\&= \frac{r-2}{r-1}kn - \biggl(\frac{1}{8(r-1)}-2\delta^{1/5}k\biggr)n
    		< (1-\delta)kd_{r-1}(n). \qedhere
    	\end{align*}
    \end{claimproof}
    
    As \(d_{U_1}(v)\ge 1\), we can choose \(u_1\in U_1\) with \(w(vu_1)\ge 1\). Let \(W'_1\subseteq U_1\) be a set of \(m\) vertices containing \(u_1\).
	Consider a sequence $W'_1,\dots, W'_s$ of sets of \(m\) vertices with $s\leq r-1$ such that $W'_i\subseteq M_i$ for each $2\leq i\leq s$ and that $w(w_iw_j)=k$ for all $1\le j<i\le s$, $w_i\in W'_i$, and \(w_j\in W'_j\). Take such a sequence with the maximum possible $s$. Indeed, such a maximum choice exists as a sequence of one set \(W'_1\) trivially satisfies this condition with \(s=1\).
	
	We claim that $s=r-1$. Suppose $s<r-1$.
	For each $i\in [s]$, take
	\[
	    L_i\defeq\{u \in M_{s+1}: \exists w\in W'_i\ \text{s.t.}\ w(uw)<k\}.
	\]
	Then we have 
	\begin{align*}
	    m\cdot (1-\delta)k\frac{r-2}{r-1}(n-1)
	    &\le m\cdot (1-\delta)kd_{r-1}(n)
	    \\&\le \sum_{u\in W'_i} d(u)
	    < km(|V-U_{i}|-|L_i|)+(km-1)|L_i|
		\\&< km\biggl(n-\biggl(\frac{n}{r-1}-\delta^{1/5}n\biggr)\biggr)-|L_i|,
	\end{align*}
	giving $|L_i|\leq \delta^{1/6}n$. Thus
	\[
	    \biggl| M_{s+1} \setminus \bigcup_{i\in [s]}L_i \biggr| \geq \frac{n}{4k^3} - r\delta^{1/6}n> m.
	\]
	Hence, we may choose a set $W'_{s+1}$ of \(m\) vertices from \(M_{s+1} \setminus \bigcup_{i\in [s]}L_i\) satisfying $w(uw)=k$ for all \(u\in W'_{s+1}\), \(w\in W'_i\), and $i\in [s]$, a contradiction to the maximality of $s$. Hence we have \(s=r-1\).
	
	Let \(xy\) be a critical edge of \(H\). Then \(d(x)+d(y)+\binom{r-2}{2}\le h+1\), so without loss of generality let \(d(y)\le h/2<k/2\). This allows an embedding of a multicolored \(H\) into \(G[\bigl(\bigcup_{i\in [r-1]}W'_i\bigr)\cup\{v\}]\) with \(x\mapsto u_1\) and \(y\mapsto v\), using \Cref{prop:proper-edge-emb-orders} with an embedding order of \(P(H)\) starting with the edges incident to \(y\), a contradiction.
\end{proof}

\subsection{Ignoring large \texorpdfstring{\(k\)}{k}}
The final proposition of this section is an analogue of \cite[Proposition~2.5]{keevash2004multicolour}, which helps us to reduce the multicolor Tur\'an problems for sufficiently large $k$ to a single value of $k$. The same proof works almost line by line.
\begin{proposition}\label{prop:k-large_enough-to-check_k=k*}
    Let \(H\) be a graph.
    Suppose that there is an \(n_0=n_0(k)>0\) such that for \(n>n_0\), there exists a fixed $H$-free simple graph \(F_n\) of order \(n\) such that every $n$-vertex $k$-extremal multigraph \(G\) of \(H\) consists of $k$ identical copies of $F_n$.  Then  for all \(l\ge k\) and \(n>n_0\), every $n$-vertex $\ell$-extremal multigraph \(G\) of \(H\) consists of $\ell$ identical copies of \(F_n\).
\end{proposition}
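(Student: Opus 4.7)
The plan is a standard averaging argument over size-$k$ subsets of the $\ell$ colors. First, the simply $\ell$-colored multigraph consisting of $\ell$ identical copies of $F_n$ is multicolored-$H$-free, since any multicolored copy of $H$ would in particular embed into the simple graph $F_n$, contradicting that $F_n$ is $H$-free. This gives the lower bound $\ex_\ell(n,H)\ge \ell\cdot e(F_n)$. For the matching upper bound, let $G$ be a simply $\ell$-colored multicolored-$H$-free multigraph on $n>n_0$ vertices with colors $G_1,\dots,G_\ell$. Averaging $\sum_{i\in S}e(G_i)$ over all $S\in\binom{[\ell]}{k}$ yields mean value $\frac{k}{\ell}e(G)$, so some such $S$ attains at least this. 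The restriction of the simple $\ell$-coloring to $S$ is a simple $k$-coloring of $G_S\defeq\bigcup_{i\in S}G_i$ with no multicolored $H$, so the hypothesis gives $e(G_S)=\sum_{i\in S}e(G_i)\le \ex_k(n,H)=k\cdot e(F_n)$. Combining, $e(G)\le \ell\cdot e(F_n)$, and thus $\ex_\ell(n,H)=\ell\cdot e(F_n)$.

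For the extremal characterization, suppose $G$ is $\ell$-extremal, so that $e(G)=\ell\cdot e(F_n)$. Since $\sum_{i\in S}e(G_i)\le k\cdot e(F_n)$ holds for \emph{every} $S\in\binom{[\ell]}{k}$ while the average over $S$ is exactly $k\cdot e(F_n)$, equality must hold for every such $S$. Hence each $G_S$ is an $n$-vertex $k$-extremal multigraph, and by hypothesis equals (as a multigraph) $kF_n$, meaning every edge of $F_n$ has multiplicity $k$ in $G_S$ and no other edge appears at all.

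The one step deserving a little care is to deduce $G_i=F_n$ from this. Fix $i\in[\ell]$ and any $S\ni i$ of size $k$. Since $G_S=\bigcup_{j\in S}G_j$ realizes every edge of $F_n$ with multiplicity exactly $k$ in a union of $k$ simple graphs, each $G_j$ with $j\in S$ must contain every edge of $F_n$; moreover, $G_S$ supports no edge outside $F_n$, so no $G_j$ with $j\in S$ can contain such an edge. As every index $i\in[\ell]$ lies in some such $S$, we conclude $G_i=F_n$ for all $i\in[\ell]$, and therefore $G$ consists of $\ell$ identical copies of $F_n$. I expect no serious obstacle; the argument is entirely averaging combined with one careful edge-by-edge extraction in the last step.
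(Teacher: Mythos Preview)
Your argument is correct and is precisely the standard averaging approach that the paper defers to \cite[Proposition~2.5]{keevash2004multicolour}; the paper does not give its own proof but states that ``the same proof works almost line by line,'' and your averaging over \(k\)-subsets of colors together with the edge-by-edge extraction in the equality case is exactly that proof. One small cosmetic point: in your final paragraph, the labeled copy of \(F_n\) realized by \(G_S\) could a priori depend on \(S\), but since your argument shows \(G_j\) equals that copy for every \(j\in S\), varying \(S\) (using \(k\ge 2\), which the hypothesis forces once \(F_n\) is \(H\)-free and \(n\) is large) pins down a single labeled copy for all colors, so the conclusion follows as you wrote.
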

This allows us that in the case of \(k\ge k^*(H)\), we can only  consider the case of \(k=\lceil k^*(H)\rceil\), so that the lower bound of \(n\) only depends on \(H\) rather than both \(H\) and \(k\).

\section{4-vertex 4-color-critical multigraphs are good}
\label{sec:4-vtx-4-cc-mgph_good}

In this section, we prove the following theorem.

\begin{theorem}\label{thm:ex-gph_4-vtx-4-cc-mgph}
    Every \(4\)-vertex \(4\)-color-critical multigraph is good.
\end{theorem}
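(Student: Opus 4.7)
Let $H$ be a 4-vertex 4-color-critical multigraph with $h$ edges. Since $\chi(H)=4$ on only four vertices, the underlying simple graph of $H$ must be $K_4$ (so $h \ge 6$), and any critical edge $xy$ has multiplicity exactly $1$; a proper 3-coloring of $H-xy$ has color classes $\{x,y\}, \{z\}, \{w\}$. By \Cref{prop:min-deg-condition} and \Cref{prop:reform_to_nested-G}, it suffices to show that every sufficiently large $n$-vertex simply $k$-nested-colored multicolored-$H$-free multigraph $G$ with $e(G)\ge e(A(n))$ and $\delta(G)\ge\delta(A(n))$ equals $A(n)$, where $A(n)=(h-1)K_n$ if $h\le k<k^*(H)=\tfrac{3}{2}(h-1)$ and $A(n)=kT_3(n)$ if $k\ge k^*(H)$. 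I split into two cases.

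For $k\ge k^*(H)$, I first apply \Cref{prop:k-large_enough-to-check_k=k*} to reduce to $k=\lceil k^*(H)\rceil$, then aim to produce an induced $H$-friendly subgraph of $G$ and invoke \Cref{lem:H-friendly-submgph=>(r-1)-partite} to conclude that $G$ is $3$-partite. The remark after the definition of $H$-friendliness gives that $hK_{4,4,4}$ is $H$-friendly, so I target a $12$-vertex induced submultigraph of $G$ partitioned into three independent $4$-sets, with every cross-pair having multiplicity at least $h$. I build this greedily by iterating \Cref{prop:min-deg=>vtx_large-d_T(v)}: under the minimum-degree hypothesis $\delta(G)\ge (1-o(1))kd_3(n)$ and $k\ge h$, almost all pairs have multiplicity close to $k$, and at each step a suitable averaging selects a new vertex with multiplicity $0$ to previously chosen vertices in its own part and multiplicity at least $h$ to the vertices of the other two parts. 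Once $G$ is $3$-partite with parts $V_1,V_2,V_3$, each color is itself $3$-partite and thus trivially $H$-free (as $\chi(H)=4$); the edge-count and minimum-degree conditions then force the tripartition to be balanced and each color to be the complete tripartite graph, giving $G=kT_3(n)$.

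For $h\le k<k^*(H)$, assume for contradiction that $G\not\cong (h-1)K_n$. Since the coloring is nested and $e(G)\ge (h-1)\binom{n}{2}$, some pair $uv$ must have multiplicity at least $h$. My plan is to find vertices $u',v'\in V(G)\setminus\{u,v\}$ such that the map $\phi(x)=u$, $\phi(y)=v$, $\phi(z)=u'$, $\phi(w)=v'$ yields a multicolored $H$, contradicting the multicolored-$H$-free hypothesis. The condition $\delta(G)\ge (h-1)(n-1)$ and averaging produce many vertices $t$ with $w(ut),w(vt)\ge h-1$; among these I pick $u'$, and a second averaging within the heavy-neighborhood of $u'$ yields $v'$ satisfying $w(uv'),w(vv'),w(u'v')\ge h-1$. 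Then each of the five non-critical pairs $\{uu',uv',vu',vv',u'v'\}$ has multiplicity at least $h-1$ while $w(uv)\ge h$, and enumerating $P(H)$ with $xy$ placed last satisfies the cumulative-sum condition of \Cref{prop:proper-edge-emb-orders}, yielding the desired multicolored copy of $H$.

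The main obstacle is the greedy construction of the induced $hK_{4,4,4}$ in the large-$k$ case: simultaneously enforcing independence within each of the three prospective parts and multiplicity at least $h$ across parts on exactly twelve chosen vertices demands delicate quantitative control over the distribution of pair multiplicities in $G$, exploiting that $k=\lceil k^*(H)\rceil$ is comparable to $h$ together with the sharp minimum-degree hypothesis. In the small-$k$ case, the subtlety is ensuring the second round of averaging really returns a $v'$ in the intersection of the heavy-neighbor sets of $u$, $v$, and $u'$; this uses the minimum degree at $u'$ together with a union bound over the failing events.
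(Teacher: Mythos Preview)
Both halves of your proposal have genuine gaps that you flag as ``obstacles'' or ``subtleties'' but do not actually resolve.

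For $h\le k<k^*$, the first averaging step already fails. \Cref{prop:min-deg=>vtx_large-d_T(v)} applied to $T=\{u,v\}$ yields a vertex $u'$ with $w(uu')+w(vu')\ge 2(h-1)$, which forces only $\min\{w(uu'),w(vu')\}\ge 2(h-1)-k>(h-1)/2$ since $k<\tfrac32(h-1)$; it does \emph{not} give both $w(uu')\ge h-1$ and $w(vu')\ge h-1$. Indeed, for $k$ close to $k^*$ the set $\{t:w(ut)\ge h-1\}$ can be as small as $(n-1)/(k-h+2)\approx 2n/h$, with no reason to meet the analogous set for $v$. Consequently the hypothesis feeding your use of \Cref{prop:proper-edge-emb-orders}---that all five non-critical image pairs carry multiplicity at least $h-1$---is unavailable. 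The paper confronts exactly this difficulty: \Cref{lem:4-vtx-4-cc-mgph_small-case} is a lengthy case analysis that manufactures a proper edge-embedding order from the weaker bounds $b_1\ge(h-1)/2$, $b_2\ge h-1$, $c_1+c_2+c_3\ge 3(h-1)$, and even then an exceptional configuration ($h_5\ge b_1$) must be treated by a separate global argument producing an independent set of size $>n/2$.

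For $k\ge k^*$, the assertion that ``almost all pairs have multiplicity close to $k$'' is false: $\delta(G)\ge kd_3(n)\approx\tfrac{2k}{3}n$ gives average pair multiplicity about $\tfrac{2k}{3}$, not $k$. With no tripartite structure known in advance there is no mechanism for greedily choosing twelve vertices forming an induced $hK_{4,4,4}$, because you cannot decide which prospective pairs should be empty. The paper instead builds a \emph{three}-vertex $H$-friendly submultigraph via \Cref{lem:ex-gph_4-vtx-4-cc-mgph_weak-deg-bd_skeleton-vtxs}, again leaning on \Cref{lem:4-vtx-4-cc-mgph_small-case}, and only then invokes \Cref{lem:H-friendly-submgph=>(r-1)-partite}. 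The missing ingredient in both of your cases is precisely this embedding lemma; without it the multiplicity bounds that averaging actually delivers are too weak for your strategy.
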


We start with a proof sketch of this result. Let \(H\) be a \(4\)-vertex \(4\)-color-critical multigraph with \(h\) edges and the fixed critical edge, and let \(G\) be a rainbow-\(H\)-free multigraph of order \(n\) such that
\[
    e(G)\ge \begin{cases*}
        (h-1)\binom{n}{2} & if \(k<k^*=k^*(H)=\frac{3}{2}(h-1)\), \\
        kt_3(n) & if \(k\ge k^*\).
    \end{cases*}
\]

First consider the case \(k<k^*\). By \Cref{prop:min-deg-condition}, we may assume that \(\delta(G)\ge \delta((h-1)K_n)=(h-1)(n-1)\). If the multiplicities of every edge is at most $h-1$, then there is nothing to prove. Otherwise, there is an edge \(v_1v_2\) of multiplicity at least \(h\). Next we utilize \Cref{prop:min-deg=>vtx_large-d_T(v)} to obtain a pair of vertices $v_3$, $v_4$ such that the following equations hold:
\[
    w(v_1v_3)+w(v_2v_3)\ge 2(h-1) \quad \text{and} \quad w(v_1v_4)+w(v_2v_4)+w(v_3v_4)\ge 3(h-1).
\]
It turns out that in $H$, if the multiplicity of the edge not adjacent to the critical edge is not too large, then we can embed $H$ in a greedy manner using \Cref{prop:proper-edge-emb-orders}. In the remaining case, we argue that there is a set $S$ consisting of at least half of the vertices $v$ in $G$ with `large' value of $w(v_1v) + w(v_2v)$. The `large' number will be chosen in such a way that $S$ will be forced to be an independent set (otherwise existence of an edge $e$ within $S$ will enable us to embed $H$ with its critical edge being embedded into $e$). However, this will violate the minimum degree assumption on $G$, giving us a contradiction.

For the case \(k \ge k^*\), by \Cref{prop:min-deg-condition}, we assume that $\delta(G)\ge \delta(k T_3(n))$. Similar arguments as above can be made to find an edge \(v_1v_2\) with multiplicity \(k\) and then find a vertex \(v_3\) such that if \(v\in V-\{v_1,v_2,v_3\}\) satisfies \(e(v,\{v_1,v_2,v_3\})\ge 2k\), then it is exactly \(2k\) and \(\{w(v_1v),w(v_2v),w(v_3v)\}=\{0,k,k\}\) as a multiset. This forms a \(3\)-vertex \(H\)-friendly subgraph of \(G\). Using this as a skeleton of the tripartite structure of \(G\), we can obtain the exact structure of $G$.

As we mentioned above, in the case when the multiplicity of the edge not adjacent to the critical edge of $H$ is not too large, we can embed $H$ with a rather involved case-study. The next lemma serves that purpose for us. For the convenience of stating it, we introduce one notation. Let \(p_1,\dots,p_\ell\) be real numbers. For each \(j\in[\ell]\), we denote the \(j\)-th smallest number among \(\{p_1,\dots,p_\ell\}\) by \(\mins{j}\{p_i:1\le i\le\ell\}\). In particular, if $p_1\leq \dots \leq p_\ell$, then $\mins{j}\{p_i\} = p_j$.

\begin{lemma}\label{lem:4-vtx-4-cc-mgph_small-case}
    Let \(H\) be a \(4\)-vertex \(4\)-color-critical multigraph with \(h\) edges, and \(k\ge h\). Let \(G_0\) be a simply \(k\)-colored \(4\)-vertex multigraph. Suppose the edge multiplicities of \(H\) and \(G_0\) are given as in \Cref{fig:(lem:4-vtx-4-cc-mgph_small-case)_labeling_G0_H}, which satisfy
	\begin{gather*}
		a\ge h;\qquad
		b_1\ge \frac{h-1}{2},\qquad b_2\ge h-1; \\
		c_1+c_2+c_3\ge \max\{3h-3,2k\},\qquad
			\min\{c_i\}\ge 1,\qquad \mins{2}\{c_i\}\ge \frac{3h-3}{4},\qquad \max\{c_i\}\ge h-1.
	\end{gather*}
	Except the case when \(h_5\ge b_1\), there is a multicolored copy of \(H\) in \(G_0\).
	
	\begin{figure}[!htbp]
		\centering
		\begin{tikzpicture}[
			x=3cm, y=3cm,
			every node/.style={draw, circle, fill=white},
			labelnode/.style={draw=none, rectangle},
			]
			\draw (0,0) node(v1){\(v_1\)}
				-- node[labelnode]{\(a\)} (0,1) node(v2){\(v_2\)}
				-- node[labelnode]{\(b_2\)} (1,1) node(v3){\(v_3\)}
				-- node[labelnode]{\(c_3\)} (1,0) node(v4){\(v_4\)}
				-- node[labelnode]{\(c_1\)} (v1);
			
			\draw (v1) -- node[labelnode,near end]{\(b_1\)} (v3)
				(v2) -- node[labelnode,near end]{\(c_2\)} (v4);
				
			\node[labelnode] at (0.5,-0.25) {$G_0$};
		\end{tikzpicture}
		\qquad
		\begin{tikzpicture}[
			x=3cm, y=3cm,
			every node/.style={draw, circle, fill=white},
			labelnode/.style={draw=none, rectangle},
			]
			\draw (0,0) node(x1){\(x_1\)}
			-- node[labelnode]{\(I=1\)} (0,1) node(x2){\(x_2\)}
			-- node[labelnode]{\(h_2\)} (1,1) node(x3){\(x_3\)}
			-- node[labelnode]{\(h_5\)} (1,0) node(x4){\(x_4\)}
			-- node[labelnode]{\(h_3\)} (x1);
			
			\draw (x1) -- node[labelnode,near end]{\(h_1\)} (x3)
			(x2) -- node[labelnode,near end]{\(h_4\)} (x4);
			
			\node[labelnode] at (0.5,-0.25) {$H$};
		\end{tikzpicture}
		\caption{The labelings of \(G_0\) and \(H\).}
		\label{fig:(lem:4-vtx-4-cc-mgph_small-case)_labeling_G0_H}
	\end{figure}
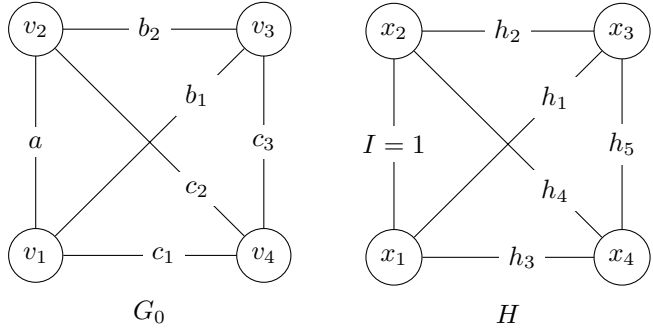
\end{lemma}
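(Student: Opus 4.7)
The plan is to apply \Cref{prop:proper-edge-emb-orders} by exhibiting an embedding \(\phi\colon V(H)\to V(G_0)\) together with a proper edge embedding order on \(P(H)\). Two observations shape the argument. First, since \(h_1+h_2+h_3+h_4+h_5=h-1\) and each \(h_i\ge 1\), at most one of the \(h_i\) can exceed \((h-1)/2\) and at most one can exceed \(3(h-1)/4\). Second, the symmetries of \(H\) that swap \(x_1\leftrightarrow x_2\) (exchanging \(h_1\leftrightarrow h_2\) and \(h_3\leftrightarrow h_4\)) and \(x_3\leftrightarrow x_4\) (exchanging \(h_1\leftrightarrow h_3\) and \(h_2\leftrightarrow h_4\)) let us assume \(h_1=\min\{h_1,h_2,h_3,h_4\}\); in particular \(h_1\le (h-1)/4\). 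Sorting the edges of \(G_0\) by multiplicity from weakest to strongest gives: the minimum \(c_i\) (at least \(1\)), then \(b_1\) (at least \((h-1)/2\)), then \(\mins{2}\{c_i\}\) (at least \(3(h-1)/4\)), then \(b_2\) and \(\max_i c_i\) (both at least \(h-1\)), and finally \(a\) (at least \(h\)).

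The starting candidate is the embedding \(\phi_0\colon (x_1,x_2,x_3,x_4)\mapsto (v_3,v_4,v_1,v_2)\), which routes \(x_1x_2\) (weight \(1\)) to \(v_3v_4\) (target \(c_3\)), the opposite edge \(x_3x_4\) (weight \(h_5\)) to \(v_1v_2\) (target \(a\)), and distributes the other four edges via \(x_1x_3\mapsto v_1v_3\), \(x_1x_4\mapsto v_2v_3\), \(x_2x_3\mapsto v_1v_4\), \(x_2x_4\mapsto v_2v_4\), with targets \(b_1,b_2,c_1,c_2\) respectively. The per-edge embedding conditions then reduce to \(h_2\le c_1\) and \(h_4\le c_2\); the others are immediate from the bounds. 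When they hold, a proper order is obtained by enumerating edges of \(G_0\) from weakest to strongest (earliest-deadline-first), and the tiered thresholds above together with \(h_1\le (h-1)/4\) and the uniqueness of a heavy \(h_i\) keep each partial sum below its target. When the per-edge condition fails, we switch to an alternative: swap \(\phi(x_3)\leftrightarrow\phi(x_4)\) to exchange \(h_2\) and \(h_4\) between \(c_1\) and \(c_2\); use \(\{\phi(x_3),\phi(x_4)\}=\{v_1,v_3\}\), which routes \(x_3x_4\) to \(v_1v_3\) (valid precisely because \(h_5<b_1\)) and frees \(v_1v_2\) for the heaviest of \(x_1x_3,x_1x_4,x_2x_3,x_2x_4\); or permute \(\phi\) so that \(x_1x_2\) lands on whichever \(v_iv_4\) carries the minimum \(c_i\).

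The main obstacle is the case analysis. It splits according to which \(c_i\) is the minimum and which \(h_i\) (if any) exceeds \(3(h-1)/4\); in each case, one of the above embeddings, paired with the weakest-to-strongest ordering, works. The two structural facts of the first paragraph drive the analysis: a heavy \(h_i\) is unique and can always be routed to one of \(a\), \(b_2\), or \(\max_i c_i\), while the small \(c_i\) is unique and absorbs the critical edge. The hypothesis \(h_5<b_1\) is used only to enable the alternative that sends \(x_3x_4\) through \(v_1v_3\); in its absence the remaining embeddings cannot cover certain configurations, which is why the statement excludes the case \(h_5\ge b_1\).
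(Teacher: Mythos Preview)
Your plan coincides with the paper's approach: both use \Cref{prop:proper-edge-emb-orders}, route the critical edge \(I\) to \(\min\{c_i\}\), try the target order \((\min c_i,\, b_1,\, \mins{2}c_i,\, \max c_i,\, b_2,\, a)\), and split cases according to which \(c_i\) is smallest and which \(h_i\) (if any) is large. What you have written, however, is an outline rather than a proof: the actual case analysis is announced but never carried out, and several of the bridging claims are asserted without verification.

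The most concrete gap is your blanket assertion that, once the per-edge inequalities hold, ``the tiered thresholds \ldots\ keep each partial sum below its target'' under the weakest-to-strongest ordering. This is exactly what fails in one sub-case. When \(h_5\ge (h-1)/2\), \(\min\{c_i\}=c_2\), and \(\mins{2}\{c_i\}\le h_1+h_5\), the third partial sum \(1+h_5+h_1\) can exceed \(\mins{2}\{c_i\}\), so your default ordering is not proper. The paper (Case~3-(3)) handles this by first deriving the sharper bounds \(h_5\ge\tfrac{5(h-1)}{8}\) and \(c_2\ge\tfrac{h+5}{2}\) from the hypothesis \(c_1+c_2+c_3\ge\max\{3h-3,2k\}\) (a condition you never use), and then switching to the non-monotone target order \((c_1,b_1,c_2,c_3,b_2,a)\). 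None of your listed alternatives covers this: swapping \(\phi(x_3)\leftrightarrow\phi(x_4)\) does not help since the obstruction involves \(h_5\), and routing \(x_3x_4\) to \(v_1v_3\) forces \(I\) onto \(c_2\), which is precisely the problematic minimum here. To turn your outline into a proof you need to execute the case analysis and, in particular, isolate and treat this sub-case with the extra inequality on \(c_1+c_2+c_3\).
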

\begin{proof}
	For each edge, we use the label referring its multiplicity to refer the edge itself if there is no risk of confusion, and we denote \(I\defeq x_1x_2\).
	
	We consider several cases, and for each case we determine the proper edge embedding orders using \Cref{prop:proper-edge-emb-orders}. We will decide two enumerations \((e_1,\dots,e_6)\) and \((f_1,\dots,f_6)\) of \(P(H)\) and \(P(G_0)\), respectively, and embed each \(e_i\) into \(f_i\). Except the last one case, we will set 
    \[
        e_1=I
        \quad \text{and} \quad (f_i)_{i=1}^6=\bigl(\min\{c_i\},b_1,\mins{2}\{c_i\},\max\{c_i\},b_2,a\bigr).
    \]
	The embedding orders will be determined so that they are consistent with respect to the vertex-edge incidence relations and they satisfy the condition in \Cref{prop:proper-edge-emb-orders}. 
	In particular, since \((f_i)_{i=1}^6\) is fixed, if we determine the positions of two incident edges within $(e_1,\dots, e_6)$, this determines how three vertices (thus, indeed all four vertices) must be mapped into $G_0$ and determine the entire sequence $(e_1,\dots, e_6)$. This concludes the following useful fact.
    \begin{equation}\label{eq: uniquemap}
        \begin{minipage}{0.9\textwidth}
        For two incident edges $f_i, f_j$ of $G_0$ and two incident edges $e_i, e_j$ of $H$, there exists exactly one embedding $\phi: H\hookrightarrow G_0$ such that $\phi(e_i)=f_i$ and $\phi(e_j)=f_j$. 
        \end{minipage}
    \end{equation}
    Note that the above choice of \((f_1,\dots, f_6)\) implies 
    \begin{equation}\label{eq: granted_order}
        f_1 \geq 1= I,
        \quad
        f_4, f_5\geq h-1 \geq h-e_6 = e_1+\dots+ e_5,
        \quad
        \text{and}
        \quad
        f_6\geq h.
    \end{equation}
    Thus, to verify that the orders we decide is proper, we only need to check 
    \begin{equation}\label{eq: proper}
        f_2\geq 1+ e_2
        \quad \text{and} \quad
        f_3 \geq 1+e_2+e_3.
    \end{equation}
	
	\case{\(h_1,\dots,h_5<(h-1)/2\)}
	In this case, as every \(h_i\) is at least \(1\) and they sum up to \(h-1\), we see 
	\[
	    \min\{h_i\}\le \frac{h-1}{5}
	    \quad \text{and} \quad
	    \mins{2}\{h_i\}\le \frac{h-2}{4}.
	\] 
	We consider two subcases as follows.
		
	\begin{enumerate}[label=\textbf{Case~\thecase}-(\arabic*):, wide,nosep]
		\item \(f_1 = \min\{c_i\} \neq c_2\).
		    In this case, we let \(e_2\defeq\min\{h_1,\dots,h_4\}\).
		    As $e_1$ and $e_2$ are fixed and they are incident edges of $H$ and $f_1$ and $f_2$ are also incident in $G_0$,  \eqref{eq: uniquemap} yields a unique embedding.\footnote{For example, if \(f_1=c_1\) and \(e_2=h_1\), then we should embed \(x_1,x_2,x_3,x_4\) to \(v_1,v_4,v_3,v_2\) in this order. So, \((e_i)_i=(I,h_1,h_4,h_2,h_5,h_3)\) if \(\mins{2}\{c_i\}=c_2\) and \((e_i)_i=(I,h_1,h_2,h_4,h_5,h_3)\) otherwise.}
		    Moreover, this yields proper embedding orders because the orderings satisfy \eqref{eq: proper} as
		    \begin{align*}
		        f_2 & \ge \frac{h-1}{2}\ge 1+\mins{2}\{h_i\} \quad \text{and} \\
		        f_3 &
				\ge \left\lceil\frac{3h-3}{4}\right\rceil
				\ge 1 +\left\lfloor\frac{h-1}{2}\right\rfloor +\left\lfloor\frac{h-2}{4}\right\rfloor
				\ge 1+\max\{h_i\}+\mins{2}\{h_i\}
				\ge 1+e_2+e_3.
		    \end{align*}
		    Here, one can check the second inequality for $f_3$ by considering the value of \(h \bmod 4\).
			
		\item \(f_1= \min\{c_i\} =c_2\).
			In this case, we let \(e_2\defeq h_5\) and \(e_3\defeq\min\{h_1,\dots,h_4\}\). As $e_1$ and $e_3$ are fixed and they are incident edges of $H$ and $f_1$ and $f_3$ are also incident in $G_0$, \eqref{eq: uniquemap} yields a unique embedding.
			Moreover, this yields proper embedding orders because the orderings satisfy \eqref{eq: proper} as
			\begin{align*}
			    f_2 &\ge \left\lceil\frac{h-1}{2}\right\rceil \ge 1+h_5 \quad \text{and} \\
			    f_3
				&\ge \left\lceil\frac{3h-3}{4}\right\rceil
				\ge 1+\left\lfloor\frac{h-1}{2}\right\rfloor
				    +\left\lfloor\frac{h-1}{5}\right\rfloor
				\ge 1+\max\{h_i\}+\min\{h_i\}
				\ge 1+e_2+e_3.
			\end{align*}
			Here, by noting that \(\left\lfloor\frac{h-2}{4}\right\rfloor \ge \left\lfloor\frac{h-1}{5}\right\rfloor\), the second inequality for $f_3$ is implied by the last inequality used in the previous case.
	\end{enumerate}
	
	\case{\(\max\{h_1,\dots,h_4\}\ge (h-1)/2\)}
	In this case, without loss of generality suppose that 
	\[
	    h_1=\max\{h_1,\dots,h_4\}\ge (h-1)/2
	    \quad \text{and thus} \quad
	    h_2 + h_3 + h_4 + h_5\le (h-1)/2.
	\]
	Hence, we have
	\[
	    \mins{3}\{h_2,\dots,h_5\}\le \frac{h-5}{4} 
	    \quad \text{and} \quad
        \max\{h_2,\dots,h_5\}\le \frac{h-7}{2}.
    \]
    
    If \(f_1 = \min\{c_i\} \neq c_3\), then we set \(e_6\defeq h_1\) so that we will embed \(h_1\) to \(a\) which has the largest multiplicity lower bound. As $e_6=h_1$ is incident with $e_1=I$ and $f_1$ and $f_6$ are incident in $G_0$, \eqref{eq: uniquemap} yields a unique embedding.
    
    If \(f_1=\min\{c_i\}=c_3\), then we want to embed \(I\) to \(c_3\). As \(h_1\) is incident with \(I\), we cannot embed \(h_1\) to \(a\) which is not incident with \(c_3\). Hence we let \(e_5\defeq h_1\) so that we embed \(h_1\) to the edge of \(G_0\) whose multiplicity lower bound is the second largest. 
    As $e_5$ is incident with $e_1=I$ and $f_1$ and $f_5$ are incident in $G_0$, \eqref{eq: uniquemap} yields a unique embedding.

    In either case, the embedding orders are proper embedding orders because \eqref{eq: proper} holds as
	\begin{align*}
		f_2&\ge \frac{h-1}{2} \ge 1+\frac{h-7}{2} \ge 1+\max\{h_2,\dots,h_5\}\ge 1+e_2
		\quad\text{and} \\
		f_3&\ge \frac{3h-3}{4} \ge 1+\frac{h-7}{2}+\frac{h-5}{4} \ge 1+\max\{h_2,\dots,h_5\}+\mins{3}\{h_2,\dots,h_5\}\ge 1+e_2+e_3.
	\end{align*}
	
	\case{\(h_5\ge (h-1)/2\) and \(b_1\ge h_5+1\)}
	In this case, we have \(h_1 + h_2 + h_3 + h_4\le (h-1)/2\). Without loss of generality, we assume \(h_1=\min\{h_1,\dots,h_4\}\). Note that \(e_1=I\) is incident with \(h_1\). We have 
	\[
	    h_1\le \frac{h-1}{8}
	    \quad\text{and}\quad
	    \max\{h_1,\dots,h_4\}\le \frac{h-7}{2}.
	\]
	\begin{enumerate}[label=\textbf{Case~\thecase}-(\arabic*):, wide,nosep]
		\item \(f_1= \min\{c_i\}\neq c_2\).
		    Let \(e_2\defeq h_1\). Then $e_1=I$ and $e_2=h_1$ are incident in $H$ while $f_1$ and $f_2$ are incident in $G_0$.
		    Hence \eqref{eq: uniquemap} yields a unique embedding. These embedding orders are proper because they satisfy \eqref{eq: proper} as
			\begin{align*}
				f_2&\ge \frac{h-1}{2}\ge 1+\frac{h-1}{8}\ge 1+h_1=1+e_2
				\quad\text{and} \\
				f_3&\ge \frac{3h-3}{4}\ge 1+\frac{h-1}{8}+\frac{h-7}{2} \ge 1+h_1+\max\{h_1,\dots,h_4\}\ge 1+e_2+e_3.
			\end{align*}
		
		\item \(f_1= \min\{c_i\}=c_2\) and \(\mins{2}\{c_i\}\ge 1+h_1+h_5\).
		    As \(f_1=c_2\) and \(f_2=b_1\) are not incident, we choose $e_3$ first instead of $e_2$. Also, let \(e_3\defeq h_1\). As $e_1=I$ and $e_3=h_1$ are incident in $H$ while $f_1$ and $f_3$ are incident in $G$, \eqref{eq: uniquemap} yields a unique embedding, and we have $e_2 = h_5$ in this unique ordering.
		    This uniquely determines \((e_i)_i\), which is proper as
    		\begin{align*}
    			f_2&\ge 1+h_5=1+e_2 \quad\text{and}\\
    			f_3&\ge 1+h_5+h_1=1+e_2+e_3.
    		\end{align*}
    		
		\item \(f_1=\min\{c_i\}=c_2\) and \(\mins{2}\{c_i\}\le h_1+h_5\).
    		This is the only case in which the prescribed \((f_i)_{i=1}^6\) is not used. Suppose \(c_2\le c_1\le c_3\) as the other case is similar. From
    		\[
    			\frac{3h-3}{4}\le c_1\le h_1+h_5\le \frac{h-1}{8}+h_5,
    		\]
    		we see \(h_5\ge (5h-5)/8\). As \(h_1+\dots+h_5=h-1\), this yields new bounds
    		\[
    			h_1= \min\{h_1,\dots, h_4\} \le\frac{3h-3}{32},\qquad
    			\max\{h_1,\dots,h_4\}\le\frac{3h-27}{8}.
    		\]
    		In addition, from \(c_1+c_2+c_3\ge \max\{3h-3,2k\}\) we have
    		\[
    		    c_1+c_2
    		    \ge\begin{cases*}
    		        (3h-3)-\frac{3h-4}{2}  \ge \frac{3h-3}{2} & if \(h\le k\le \frac{3h-4}{2}\), \\
    		        2k-k  \ge \frac{3h-3}{2} & if \(k\ge\frac{3h-3}{2}\).
    		    \end{cases*}
    		\]
    		With this and \(c_1\le h_1+h_5\leq  h-(1+h_2+h_3+h_4)\le h-4\), it follows \(c_2\ge (h+5)/2\). Set \((e_i)_i\defeq(I,h_1,h_4,h_2,h_5,h_3)\) and \((f_i)_i\defeq(c_1,b_1,c_2,c_3,b_2,a)\). This is proper as the inequalities in \eqref{eq: granted_order} holds along with the following ones:
    		\begin{align*}
    			f_2&\ge \frac{h-1}{2}\ge 1+\frac{3h-3}{32}\ge 1+h_1=1+e_2, \\
    			f_3&\ge \frac{h+5}{2}\ge 1+\frac{3h-3}{32}+\frac{3h-27}{8}
    				\ge 1+h_1+\max\{h_1,\dots,h_4\}\ge 1+e_2+e_3.
    		\end{align*}
	\end{enumerate}
	
	The remaining case is when \(h_5\ge (h-1)/2\) and \(b_1\le h_5\), which is equivalent to the exceptional case \(h_5\ge b_1\) in the statement.
\end{proof}

In order to prove \Cref{thm:ex-gph_4-vtx-4-cc-mgph}, we first prove the following stronger lemma  which implies the theorem straightforwardly. We will use this lemma later in the proof of \Cref{thm:ex-gph_stab_r-cc-mgph_k-small}.

\begin{lemma}\label{lem:ex-gph_4-vtx-4-cc-mgph_weak-deg-bd}
Suppose \(k\ge h\) and \(0<\frac{1}{n}\ll\delta\ll\frac{1}{k}<1\). 
    Let \(H\) be an \(4\)-vertex \(4\)-color-critical multigraph with \(h\) edges, and \(k^*\defeq k^*(H)=\frac{3}{2}(h-1)\). Let \(G\) be a simply \(k\)-colored multicolored-\(H\)-free multigraph of order \(n\) such that
    \[
        \delta(G)\ge
        \begin{cases*}
            (1-\delta)(h-1)(n-1) & if \(h\le k<k^*\), \\
            (1-\delta)kd_3(n) & if \(k\ge k^*\).
        \end{cases*}
    \]
    \begin{enumerate}
        \item If \(h\le k<k^*\), then \(w(e)\le h-1\) for all \(e\in P(G)\).
        \item If \(k\ge k^*\) and \(G\) has an edge with multiplicity at least \(h\), then \(G\) is \(3\)-partite.
    \end{enumerate}
\end{lemma}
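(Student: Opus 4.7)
My plan is to treat both parts in parallel, with both starting from the assumption that some edge $v_1v_2$ of $G$ satisfies $w(v_1v_2) \ge h$. Part (1) will derive a contradiction, whereas Part (2) will derive the desired $3$-partite structure.

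For Part (1), assuming $h \le k < k^* = \frac{3}{2}(h-1)$, I would apply \Cref{prop:min-deg=>vtx_large-d_T(v)} twice to grow a $4$-vertex submultigraph: first with $T = \{v_1,v_2\}$ and parameter near $h-1$, yielding $v_3$ with $w(v_1v_3) + w(v_2v_3) \ge 2(h-1)$; then with $T = \{v_1,v_2,v_3\}$, yielding $v_4$ with $w(v_1v_4) + w(v_2v_4) + w(v_3v_4) \ge 3(h-1)$. Since $k < \frac{3}{2}(h-1)$ and every multiplicity is at most $k$, a short computation (in particular, two of the $c_i$'s being below $\frac{3(h-1)}{4}$ would force the third above $\frac{3(h-1)}{2} > k$) verifies every hypothesis of \Cref{lem:4-vtx-4-cc-mgph_small-case} on $G_0 := G[\{v_1,v_2,v_3,v_4\}]$. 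This lemma then produces a multicolored $H$ in $G_0$ \emph{unless} we are in the exceptional case $h_5 \ge b_1$. In that exceptional case, $h_5 \ge (h-1)/2$ forces $h_1+h_2+h_3+h_4 \le (h-1)/2$, so every $h_i$ is small. I would enlarge the scope by defining
\[
    S \defeq \bigl\{v \in V(G) \setminus \{v_1,v_2\} : w(vv_1) + w(vv_2) \ge 2(h-1) - \eta\bigr\}
\]
for a small slack $\eta$; a degree-sum argument using $\delta(G) \ge (1-\delta)(h-1)(n-1)$ then shows $|S| \ge n/2$. For any $u,u' \in S$ with $w(uu') \ge 1$, I would embed $H$ into $G[\{v_1,v_2,u,u'\}]$ via $x_3x_4 \mapsto v_1v_2$ (heavy $H$-edge onto the heavy $G$-edge), $x_1x_2 \mapsto uu'$ (critical edge needing only multiplicity one), and the four cross-edges fitted greedily by \Cref{prop:proper-edge-emb-orders}; the smallness of each $h_i$ together with $u,u' \in S$ meets every cumulative constraint. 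Hence $S$ is independent, but then each $v \in S$ has $d(v) \le k(n-|S|) \le kn/2$, contradicting $\delta(G) \ge (1-\delta)(h-1)(n-1)$ since $h \ge 6$ (as $H$ is $4$-color-critical).

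For Part (2), with $k \ge k^*$ and the edge $v_1v_2$ of multiplicity $\ge h$, the stronger degree bound $\delta(G) \ge (1-\delta)kd_3(n)$ allows locating a triangle $\{v_1,v_2,v_3\}$ whose three pairwise multiplicities all equal $k$. This is the main technical step: \Cref{prop:min-deg=>vtx_large-d_T(v)} alone yields average multiplicities only around $2k/3$, so the upgrade to all-$k$ must additionally exploit multicolored-$H$-freeness---a triangle with some edge of multiplicity $<k$, together with the forced surrounding structure on vertices of high pair-degree into it, either admits a local multicolored $H$ or can be replaced by a triangle with larger pairwise multiplicities. Once $K := G[\{v_1,v_2,v_3\}] = 3K_3$ is in hand, I would directly verify $K$ is $H$-friendly with parts $W_i = \{v_i\}$: for any new $v$ with $d(v) \ge 2k$ and $w(vv_i) \ge 1$, map the critical endpoint $x_1$ of $H$ to $v$ and pair the $H$-multiplicities $1, h_{13}, h_{14}$ incident to $x_1$ with the sorted $w(vv_i)$'s; the bounds $w_{(2)} \ge k/2$ and $w_{(3)} \ge 2k/3$, combined with $k \ge \frac{3}{2}(h-1)$, suffice to meet all cumulative constraints in \Cref{prop:proper-edge-emb-orders} (the three triangle edges of $K$ handle the remaining cumulatives since each has multiplicity $k \ge h$). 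Finally, \Cref{lem:H-friendly-submgph=>(r-1)-partite} applied with $r=4$ and $a=1$ concludes $G$ is $3$-partite.

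The main obstacles I foresee are: (i) in Part (1), the exceptional case $h_5 \ge b_1$---the local $4$-vertex lemma no longer applies, and the global independent-set argument on $S$ requires careful balancing of the threshold defining $S$ against the smallness of $h_1,\dots,h_4$; and (ii) in Part (2), upgrading from a single edge of multiplicity $\ge h$ to an all-$k$ triangle, which must rely on multicolored-$H$-freeness rather than purely on averaging.
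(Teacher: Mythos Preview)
Your non-exceptional branch of Part~(1) is correct and essentially matches the paper. There are, however, real gaps in the exceptional case of Part~(1) and in Part~(2).

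\textbf{Part (1), exceptional case.} With your threshold $2(h-1)-\eta$ for $S$, the degree-sum argument does \emph{not} give $|S|\ge n/2$: summing $d(v_1)+d(v_2)$ and bounding each $v\in S$ by $2k$ and each $v\notin S$ by $2(h-1)-\eta$ yields only
\[
    |S|\gtrsim \frac{\eta n}{2(k-h+1)+\eta},
\]
which is far below $n/2$ when $\eta$ is small. The paper uses instead the threshold $k'+(h-h_5+1)/2$ with $k'=\max\{k,k^*\}$ and, crucially, also assumes for contradiction that every $v\in S$ satisfies $w(vv_1)+w(vv_2)<k'+h_5+1$; only with this extra cap on the $S$-contribution does the computation produce $|S|>n/2$. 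The payoff is not a bare contradiction but a vertex $v_3\in S$ with $\min\{w(v_1v_3),w(v_2v_3)\}\ge h_5+1$, so that \Cref{lem:4-vtx-4-cc-mgph_small-case} applies to any fourth vertex without hitting its exceptional clause---i.e.\ $G[\{v_1,v_2,v_3\}]$ is $H$-friendly.

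\textbf{Part (2).} Your plan rests on finding an all-$k$ triangle, but the minimum degree $(1-\delta)kd_3(n)\approx\tfrac{2k}{3}n$ does not force any vertex to have even one neighbour at multiplicity $k$ (indeed $d(v)-(k-1)(n-1)<0$ here since $k\ge h\ge 6$), and you give no concrete mechanism by which $H$-freeness would supply one. The paper sidesteps this entirely: the same auxiliary result (\Cref{lem:ex-gph_4-vtx-4-cc-mgph_weak-deg-bd_skeleton-vtxs}) that handles Part~(1) also furnishes, for $k\ge k^*$, an $H$-friendly triple $\{v_1,v_2,v_3\}$---certified via \Cref{lem:4-vtx-4-cc-mgph_small-case} using only $w(v_1v_2)\ge h$ and $b_1\ge h_5+1$, not all-$k$ edges---after which \Cref{lem:H-friendly-submgph=>(r-1)-partite} finishes.

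Structurally, the paper funnels both parts through a single lemma producing an $H$-friendly $3$-vertex induced subgraph; both of your gaps trace back to the regime $h_5\ge(h-1)/2$, where the right $v_3$ must be \emph{chosen} (so that $b_1>h_5$) rather than obtained by averaging.
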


\begin{proof}[Proof of \Cref{thm:ex-gph_4-vtx-4-cc-mgph} using \Cref{lem:ex-gph_4-vtx-4-cc-mgph_weak-deg-bd}]
    Let \(H\) be a \(4\)-vertex \(4\)-color-critical multigraph with \(h\) edges, and \(k^*\defeq k^*(H)=\frac{3}{2}(h-1)\).
    
    First consider when \(h\le k<k^*\). Let \(G\) be a simply \(k\)-colored multigraph of order \(n\) not containing a multicolored copy of \(H\), such that \(e(G)\ge (h-1)\binom{n}{2}\). By \Cref{prop:min-deg-condition} we may assume \(\delta(G)\ge (h-1)(n-1)\). Then \Cref{lem:ex-gph_4-vtx-4-cc-mgph_weak-deg-bd} applies so that every edge in \(G\) has multiplicity at most \(h-1\), which gives the desired result.
    
    Next, consider when \(k\ge k^*\). By \Cref{prop:k-large_enough-to-check_k=k*} it is enough to prove for \(k=\lceil k^* \rceil\). Let \(G\) be a simply \(k\)-colored multigraph of order \(n\) not containing a multicolored copy of \(H\), such that \(e(G)\ge k t_3(n)\). By \Cref{prop:min-deg-condition} assume \(\delta(G)\ge kd_3(n)\). There is an edge with multiplicity at least \(h\), since otherwise \(e(G)\le (h-1)\binom{n}{2}< k t_3(n)\), a contradiction. Then \Cref{lem:ex-gph_4-vtx-4-cc-mgph_weak-deg-bd} applies so that \(G\) is \(3\)-partite, which gives the desired result.
\end{proof}

To prove \Cref{lem:ex-gph_4-vtx-4-cc-mgph_weak-deg-bd}, we collect the following lemma. It implies that, if there is one edge in $G$ of large multiplicity, then we can find a $H$-friendly subgraph for a $4$-vertex $4$-color-critical multigraph $H$ within a multigraph $G$ with large minimum degree.
    
\begin{lemma}\label{lem:ex-gph_4-vtx-4-cc-mgph_weak-deg-bd_skeleton-vtxs}
 Suppose \(k\ge h\) and \(0<\frac{1}{n}\ll\delta\ll\frac{1}{k}<1\), and \(k^*\defeq k^*(4,h)=\frac{3}{2}(h-1)\).
    Let \(H\) be a \(4\)-vertex \(4\)-color-critical multigraph with \(h\) edges and \(G\) be a simply \(k\)-colored multicolored-\(H\)-free multigraph of order \(n\) with
    \[
        \delta(G)\ge
        \begin{cases*}
            (1-\delta)(h-1)(n-1) & if \(h\le k<k^*\), \\
            (1-\delta)kd_3(n) & if \(k\ge k^*\).
        \end{cases*}
    \]
    
    Suppose there is an edge \(v_1v_2\) of multiplicity at least \(h\). Then there is \(v_3\in V(G)-\{v_1,v_2\}\) such that \(G[\{v_1,v_2,v_3\}]\) is \(H\)-friendly.
\end{lemma}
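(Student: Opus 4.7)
The plan is to invoke \Cref{prop:min-deg=>vtx_large-d_T(v)} with $T = \{v_1, v_2\}$ (so $t = 2$) and the degree parameter $d$ given by the minimum-degree hypothesis ($d = h-1$ when $h \le k < k^*$, and $d = 2k/3$ when $k \ge k^*$, via $d_3(n) \ge 2(n-1)/3$). This produces $v_3 \in V(G) - \{v_1, v_2\}$ with $w(v_1 v_3) + w(v_2 v_3) \ge 2d$. After possibly swapping the labels $v_1 \leftrightarrow v_2$, write $b_1 \defeq \min\{w(v_1 v_3), w(v_2 v_3)\}$ and $b_2 \defeq \max\{w(v_1 v_3), w(v_2 v_3)\}$; elementary bounds using $k \le \lfloor k^* \rfloor$ (resp.\ $k \ge k^*$) give $b_2 \ge h - 1$ and $b_1 \ge (h-1)/2$, matching the hypotheses on $b_1, b_2$ in \Cref{lem:4-vtx-4-cc-mgph_small-case}.

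To establish $H$-friendliness, take any vertex $v$ with $c_i \defeq w(v v_i) \ge 1$ for each $i \in \{1, 2, 3\}$ and $c_1 + c_2 + c_3 \ge 2\max\{k, k^*\}$. Using the cap $c_i \le k$, one verifies $c_1 + c_2 + c_3 \ge \max\{3h - 3, 2k\}$, $\min c_i \ge 1$, $\mins{2} c_i \ge (3h-3)/4$, and $\max c_i \ge h - 1$---exactly the hypotheses on $c_1, c_2, c_3$ in \Cref{lem:4-vtx-4-cc-mgph_small-case}. If additionally $h_5 < b_1$, then \Cref{lem:4-vtx-4-cc-mgph_small-case} directly yields a multicolored copy of $H$ in $G[\{v_1, v_2, v_3, v\}]$, as required.

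The remaining case is the exception $h_5 \ge b_1$, which I handle directly. Here $h_5 \ge b_1 \ge (h-1)/2$ forces $h_1 + h_2 + h_3 + h_4 \le (h-1)/2$, so each $h_i \le (h-1)/2 \le b_1$; combined with $b_1 + b_2 \ge 2d$ and $b_1 \le h_5 \le h-5$, we deduce $b_2 \ge 2d - h_5 \ge h$. I then map $x_3 x_4 \mapsto v_2 v_3$ (supported by $b_2 \ge h \ge h_5$) and the critical edge $I = x_1 x_2 \mapsto v_1 v$ (supported by $c_1 \ge 1$), and distribute the remaining $h_i$'s over the other edges of $G[\{v_1, v_2, v_3, v\}]$. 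A proper edge-embedding order in the sense of \Cref{prop:proper-edge-emb-orders} is constructed by placing $I$ first, then the $h_i$'s, and finally $h_5$; the inequalities $\mins{2} c_i \ge (3h-3)/4 \ge h_i$, $b_1 \ge (h-1)/2 \ge h_i$, and $b_2 \ge h$ together ensure the cumulative condition. A short case analysis on which of $c_1, c_2, c_3$ is the smallest, possibly after a further swap of $v_1 \leftrightarrow v_2$, completes the argument.

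The main obstacle is the exceptional case: the proper edge-embedding order must be verified for every admissible $v$, and the subtlety comes from the opposition structure of the $K_4$ underlying $H$ and $G_0$, which forces $I$ and $h_5$ to map to opposite edges of $G_0$. When the smallest $c_i$ happens to be opposite to $b_1$, the natural embedding strategy breaks down, and a careful relabeling or alternative distribution of the $h_i$'s (exploiting the strong bound $b_2 \ge h$ and the packing room in the non-minimal $c_i$'s) is required.
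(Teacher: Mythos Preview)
Your plan is sound when \(h_5<(h-1)/2\): there the generic \(v_3\) from \Cref{prop:min-deg=>vtx_large-d_T(v)} has \(b_1\ge(h-1)/2>h_5\), so the exceptional clause of \Cref{lem:4-vtx-4-cc-mgph_small-case} never triggers, and your verification of the \(c_i\)-hypotheses is correct. This part matches the paper.

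The gap is in your handling of the exceptional case \(h_5\ge b_1\). The direct embedding you sketch cannot work for every admissible test vertex \(v\). Concretely, take \(H\) with \(h_5=51\), \(h_1=h_2=h_3=h_4=12\), \(h=100\), and let \(k=148<k^*=148.5\). Suppose the \(v_3\) returned by \Cref{prop:min-deg=>vtx_large-d_T(v)} has \(b_1=w(v_1v_3)=50\), \(b_2=w(v_2v_3)=148\) (this satisfies \(b_1+b_2\ge 2(h-1)\)). Now consider a test vertex \(v\) with \(c_1=w(v_1v)=148\), \(c_2=w(v_2v)=1\), \(c_3=w(v_3v)=148\); then \(c_1+c_2+c_3=297=2k^*\) and all \(c_i\ge1\), so \(v\) is a legitimate vertex in the definition of \(H\)-friendliness. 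In \(G_0=G[\{v_1,v_2,v_3,v\}]\) the edge of multiplicity \(1\) (namely \(c_2\)) can only receive an edge of \(H\) of multiplicity \(1\), i.e.\ the critical edge \(I\). But \(I\mapsto c_2=v_2v\) forces \(h_5\mapsto v_1v_3=b_1=50<51=h_5\), so no embedding of \(H\) into \(G_0\) exists at all, let alone a proper edge-embedding order. Your proposed ``relabeling or alternative distribution'' cannot rescue this: the opposition structure you flag is fatal, not merely subtle. The problem is that the local data \((a,b_1,b_2,c_1,c_2,c_3)\) alone do not suffice when \(h_5\ge b_1\), and your argument uses only local data at this step.

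The paper avoids this by choosing \(v_3\) differently when \(h_5\ge(h-1)/2\). It defines
\[
    S=\Bigl\{u: w(uv_1)+w(uv_2)\ge \max\{k,k^*\}+\tfrac{h-h_5+1}{2}\Bigr\},
\]
proves via a direct embedding that \(S\) is independent, and then uses the global minimum-degree hypothesis together with \(|S|>n/2\) (derived from independence) to force the existence of some \(v_3\in S\) with \(w(v_1v_3)+w(v_2v_3)\ge \max\{k,k^*\}+h_5+1\). This stronger bound gives \(b_1\ge h_5+1\), so the exceptional case of \Cref{lem:4-vtx-4-cc-mgph_small-case} is again avoided. The key missing ingredient in your argument is this use of global structure to upgrade the choice of \(v_3\).
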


\begin{proof}
    Label the edge multiplicities of \(H\) as in \Cref{fig:(clm:(case:(lem:ex-gph_4-vtx-4-cc-mgph_weak-deg-bd)-pf_k-small)_S-indep)_labeling_gphs}.
    Let $k'\defeq \max\{k^*, k\}$.

	Suppose first that \(h_5< (h-1)/2\).
	As $(1-\delta)k^* d_3(n) \geq (1-\delta)(h-1)(n-1)$, using \Cref{prop:min-deg=>vtx_large-d_T(v)}, choose \(v_3\in V-\{v_1,v_2\}\) such that \(w(v_1v_3)+w(v_2v_3)\ge 2(h-1).\)
    We claim that $G[\{v_1,v_2,v_3\}]$ is \(H\)-friendly.
	Indeed, if there exists $v\notin \{v_1,v_2,v_3\}$ such that 
	$w(v_1v)+ w(v_2v)+ w(v_3v) \geq \max\{2k, 3(h-1)\}$ and $\min\{ w(v_1v), w(v_2v),w(v_3v)\}\geq 1$, then with these choices, it is straightforward that the assumptions on  \Cref{lem:4-vtx-4-cc-mgph_small-case} are met with $G[\{v_1,v_2,v_3,v\}],v_1,v_2,v_3,v$ playing the roles of $G_0,v_1,v_2,v_3,v_4$ in \Cref{lem:4-vtx-4-cc-mgph_small-case}.
	We can apply \Cref{lem:4-vtx-4-cc-mgph_small-case} to conclude that the induced subgraph \(G[\{v_1,v_2,v_3,v\}]\) contains a multicolored copy of \(H\). Hence $G[\{v_1,v_2,v_3\}]$ is \(H\)-friendly.

    Next, suppose \(h_5\ge (h-1)/2\).
	Define a set
	\[
		S\defeq
		\biggl\{v\in V-\{v_1,v_2\} : w(vv_1)+w(vv_2)\ge k'+\frac{h-h_5+1}{2}\biggr\}.
	\]
    
	\begin{claim}\label{clm:(case:(lem:ex-gph_4-vtx-4-cc-mgph_weak-deg-bd)-pf_k-small)_S-indep}
		\(S\) is independent.
	\end{claim}
	\begin{claimproof}
		Suppose to the contrary that there is an edge \(v_3v_4\) in \(S\). Let \(G_0\defeq G[\{v_1,\dots,v_4\}]\), and label the edge multiplicities as in \Cref{fig:(clm:(case:(lem:ex-gph_4-vtx-4-cc-mgph_weak-deg-bd)-pf_k-small)_S-indep)_labeling_gphs}. Without loss of generality assume \(m_1\le m_2\). As in the proof of \Cref{lem:4-vtx-4-cc-mgph_small-case}, for each edge, we use the label referring its multiplicity to refer the edge itself, and we denote \(I\defeq x_1x_2\). We claim \(G_0\) contains a multicolored \(H\), so we determine the proper edge embedding orders \((e_i)_{i=1}^6\) and \((f_i)_{i=1}^6\) of \(P(H)\) and \(P(G_0)\), respectively. We assign: \(e_1=I\), \(f_1=v_3v_4\) and \(e_6=h_5\), \(f_6=v_1v_2\).
		\begin{figure}[!htbp]
			\centering
			\begin{tikzpicture}[
				x=3cm, y=3cm,
				every node/.style={draw, circle, fill=white},
				labelnode/.style={draw=none, rectangle},
				]
				\draw (0,0) node(v2){\(v_2\)}
				-- node[labelnode]{\(\ge h\)} (0,1) node(v1){\(v_1\)}
				-- node[labelnode]{\(m_1\)} (1,1) node(v3){\(v_3\)}
				-- node[labelnode]{\(\ge 1\)} (1,0) node(v4){\(v_4\)}
				-- node[labelnode]{\(m_4\)} (v2);
				
				\draw (v2) -- node[labelnode,near end]{\(m_2\)} (v3)
				(v1) -- node[labelnode,near end]{\(m_3\)} (v4);
				
    			\node[labelnode] at (0.5,-0.25) {$G_0$};
			\end{tikzpicture}
			\qquad
    		\begin{tikzpicture}[
				x=3cm, y=3cm,
				every node/.style={draw, circle, fill=white},
				labelnode/.style={draw=none, rectangle},
				]
				\draw (0,0) node(x2){\(x_2\)}
				-- node[labelnode]{\(h_5\)} (0,1) node(x1){\(x_1\)}
				-- node[labelnode]{\(h_1\)} (1,1) node(x3){\(x_3\)}
				-- node[labelnode]{\(I=1\)} (1,0) node(x4){\(x_4\)}
				-- node[labelnode]{\(h_4\)} (x2);
				
				\draw (x2) -- node[labelnode,near end]{\(h_2\)} (x3)
				(x1) -- node[labelnode,near end]{\(h_3\)} (x4);
				
    			\node[labelnode] at (0.5,-0.25) {\(H\)};
			\end{tikzpicture}
			\caption{The labelings of \(G_0\) and \(H\).}
			\label{fig:(clm:(case:(lem:ex-gph_4-vtx-4-cc-mgph_weak-deg-bd)-pf_k-small)_S-indep)_labeling_gphs}
		\end{figure}
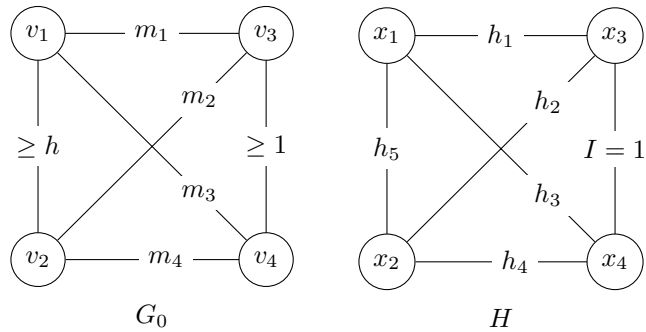
		
		Suppose \(m_4\le m_3\). Then as \(m_1\le m_2\), we have \(m_1+m_4\le m_2+m_3\),
		\begin{align*}
			m_1,m_4&\ge \biggl(k'+\frac{h-h_5+1}{2}\biggr) - k \ge \frac{h-h_5+1}{2},\\
			m_2,m_3&\ge \frac{1}{2} \biggl( \frac{3}{2}(h-1)+\frac{h-h_5+1}{2} \biggr)
				= h-\frac{h_5}{4}-\frac{1}{2}
				\geq h-h_5
				= h_1+h_2+h_3+h_4+1,
		\end{align*}
		and \(\min\{h_1+h_4,h_2+h_3\}\le (h-h_5-1)/2\). Without loss of generality, suppose \(h_1+h_4\le h_2+h_3\). Then set \((e_2,\dots,e_5)=(h_1,h_4,h_2,h_3)\) and \((f_2,\dots,f_5)=(m_1,m_4,m_2,m_3)\). It is straightforward to check that these embedding orders are consistent with respect to the vertex-edge incidence relations and they satisfy \eqref{eq:proper-edge-emb-orders_condition}. The case when \(m_4>m_3\) is similar, so we omit it. Therefore \(G_0\) contains a multicolored \(H\), a contradiction.
	\end{claimproof}
    
    We claim that there exists a vertex \(v_3\in S\) such that \(w(v_1v_3)+w(v_2v_3)\geq k'+h_5+1.\)
    Suppose not. Then we have \(w(v_1v)+w(v_2v) < k'+h_5+1\) for all \(v\in S\). Then, by the minimum degree condition of $G$ and the fact that $d_3(n) \ge \frac{2}{3}(n-1)$, we have that $\delta(G) \ge (1-\delta) k' \cdot \frac{2}{3}(n-1)$. Thus,
    \begin{align*}
	    2(1-\delta) k' \cdot \frac{2}{3}(n-1) &\le d(v_1)+d(v_2)
	    \\&= e(\{v_1,v_2\},S) + e\bigl(\{v_1,v_2\},V-(S\cup\{v_1,v_2\})\bigr) + 2w(v_1v_2)
		\\&\le \left(k'+h_5+\frac{1}{2}\right)|S|+\biggl(k'+\frac{h-h_5}{2}\biggr)(n-|S|-2) + 2k.
	\end{align*}
	Here, the final inequality holds by the definition of \(S\). Using $h_5 \le h - 5$, we have
	\begin{align*}
		|S|
		&\ge \frac{(2k'-3h+3h_5-8\delta k')n-12k+4(1+2\delta)k'+6(h-h_5)}{3(3h_5-h+1)}
		\\&> \frac{(2k'-3h+3h_5-1)n}{3(3h_5-h+1)}
		= \frac{n}{3} + \frac{(2k'-2h-2)n}{3(3h_5-h+1)} \\
		&\geq \frac{n}{3} + \frac{(2k'-2h-2)n}{3(2h-14)}.
	\end{align*}
	Note that we have \((h+1)/2 \ge h-h_5 \ge h_1+h_2+h_3+h_4+1 \ge 5\), so \(h\ge 11\). As \(k'\ge\frac{3}{2}(h-1)\), we conclude that \(|S|\ge \frac{n}{3} + \frac{(h-5)n}{3(2h-14)} > \frac{n}{2}\). This fact together with \Cref{clm:(case:(lem:ex-gph_4-vtx-4-cc-mgph_weak-deg-bd)-pf_k-small)_S-indep} implies that for any \(v\in S\),
	\[
		d(v)\le k|V-S| < \frac{kn}{2} < (1-\delta)(h-1)(n-1) \le (1-\delta)k^*d_3(n),
	\]
	a contradiction to the minimum degree condition. Thus, there exists $v_3 \in S$ such that $w(v_1v_3)+w(v_2v_3)\ge k+h_5+1$. Fix such a vertex $v_3$.
	
    We claim that \(G[\{v_1,v_2,v_3\}]\) is \(H\)-friendly.
	Indeed, if there exists a vertex \(v\notin \{v_1,v_2,v_3\}\) such that \(w(v_1v)+w(v_2v)+w(v_3v)\ge \max\{2k, 3(h-1) \}\) and \(\min\{w(v_1v),w(v_2v),w(v_3v)\}\ge 1\), then \Cref{lem:4-vtx-4-cc-mgph_small-case} applies with \(\min\{w(v_1v_3),w(v_2v_3)\}\ge h_5+1\) to get a multicolored copy of \(H\) in \(G[v_1,v_2,v_3,v]\).
	
	Therefore, there exists \(v_3\in V-\{v_1,v_2\}\) making \(G[\{v_1,v_2,v_3\}]\) \(H\)-friendly.
\end{proof}

We finish this section by proving \Cref{lem:ex-gph_4-vtx-4-cc-mgph_weak-deg-bd} as follows.

\begin{proof}[Proof of \Cref{lem:ex-gph_4-vtx-4-cc-mgph_weak-deg-bd}]
    \case{\(h\le k<k^*\)}\label{case:(lem:ex-gph_4-vtx-4-cc-mgph_weak-deg-bd)-pf_k-small}
    Suppose to the contrary that there is an edge \(v_1v_2\) with multiplicity at least \(h\). 
    By \Cref{lem:ex-gph_4-vtx-4-cc-mgph_weak-deg-bd_skeleton-vtxs}, there exists vertices $v_1,v_2,v_3$ such that $G[\{v_1,v_2,v_3\}]$ is $H$-friendly.
    
    Using \Cref{prop:min-deg=>vtx_large-d_T(v)}, we can find a vertex $v\in V-\{v_1,v_2,v_3\}$ such that 
    \(w(v_1v)+ w(v_2v)+ w(v_3v)\geq 3(h-1)\).
    As $k<k^*= \frac{3}{2}(h-1)$, we have $3(h-1) = \max\{2k, 3(h-1)\}$.
    Moreover, as each of $w(v_iv)\leq k$, we have that for each $i\in [3]$, 
    \( w(v_iv) \geq 3(h-1) -2k \geq 1.\)
    As $G[\{v_1,v_2,v_3\}]$ is $H$-friendly, this implies that $G[\{v_1,v_2,v_3,v\}]$ contains a multicolored copy of $H$, a contradiction.
    Hence every edge of \(G\) has multiplicity at most \(h-1\), completing the proof.
	
	\case{\(k\ge k^*\) and \(G\) has an edge with multiplicity at least \(h\)}
    By applying \Cref{lem:ex-gph_4-vtx-4-cc-mgph_weak-deg-bd_skeleton-vtxs}, we obtain a vertex set $\{v_1,v_2,v_3\}$ such that  \(K\defeq G[\{v_1,v_2,v_3\}]\) is an \(H\)-friendly multigraph.
	By applying \Cref{lem:H-friendly-submgph=>(r-1)-partite} with \(r=4\) and \(K\), we conclude that \(G\) is \(3\)-partite. This finishes the proof.
\end{proof}

\section{\texorpdfstring{\(r\)}{r}-vertex multigraphs in \texorpdfstring{\(\calF_r\)}{F\textrinferior} are good}
\label{sec:r-vtx-mgph_in_Fr_good}

Here we prove that for \(r\ge5\), the \(r\)-vertex multigraphs in \(\calF_r\) are good.

\begin{theorem}\label{thm:ex-gph_r-vtx-mgph_in_Fr}
    For \(r\ge 5\), the \(r\)-vertex multigraphs in \(\calF_r\) are good.
\end{theorem}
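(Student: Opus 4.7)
The plan is to adapt the strategy of \Cref{sec:4-vtx-4-cc-mgph_good} to general $r \ge 5$, using the near-uniformity of the edge multiplicities of an $r$-vertex $H\in\calF_r$ to replace the case analysis that was manageable in the $4$-vertex setting. First, by \Cref{prop:min-deg-condition}, we may pass to a host multigraph $G$ of order $n$ with $\delta(G)\ge (1-\delta)(h-1)(n-1)$ when $h \le k < k^*$, and $\delta(G)\ge (1-\delta)kd_{r-1}(n)$ when $k \ge k^*$; for the latter range \Cref{prop:k-large_enough-to-check_k=k*} further lets us restrict to $k = \lceil k^* \rceil$. The goal then is either to force every edge multiplicity of $G$ to be at most $h-1$ (which, combined with the minimum degree condition, yields $G = (h-1)K_n$), or to force $G$ to be $(r-1)$-partite (which yields $G = kT_{r-1}(n)$).

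The crux is an embedding lemma analogous to \Cref{lem:4-vtx-4-cc-mgph_small-case}: an $r$-vertex submultigraph $G_0$ of $G$ with suitable lower bounds on its edge multiplicities must contain a multicolored copy of $H$. Because $H\in\calF_r$ lives on exactly $r$ vertices with $\chi(H)=r$, its underlying simple graph must be $K_r$, the critical edge has $w_H = 1$, and every edge of $H$ has multiplicity at most $M\defeq\frac{(2+2/r^2)(h-1)}{(r-1)(r-2)}$. I would enumerate $P(G_0)$ as $f_1,\dots,f_{\binom{r}{2}}$ in non-decreasing multiplicity and build a compatible enumeration $e_1,\dots,e_{\binom{r}{2}}$ of $P(H)$ (respecting the vertex incidences under some bijection $V(H) \to V(G_0)$), then verify the cumulative inequality $\sum_{i \le j} w_H(e_i) \le w_{G_0}(f_j)$ of \Cref{prop:proper-edge-emb-orders}. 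The balance condition $w_H(e_i)\le M$ controls the left-hand side by $jM$, while the iterative construction of $G_0$ below will guarantee that the $j$-th smallest edge multiplicity of $G_0$ grows at the required rate.

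To produce such a $G_0$, I would start from an edge $v_1v_2$ of multiplicity at least $h$ (in the small-$k$ case) or equal to $k$ (in the large-$k$ case), and iteratively invoke \Cref{prop:min-deg=>vtx_large-d_T(v)} with appropriate parameters to select $v_3,v_4,\dots,v_r$ so that each successive vertex carries a large cumulative edge multiplicity $d_{\{v_1,\dots,v_{i-1}\}}(v_i)$ to the previously chosen vertices. The minimum degree condition on $G$ guarantees each extension step is possible. In the small-$k$ regime, the resulting $G_0$ yields a multicolored copy of $H$ by the embedding lemma, contradicting the multicolored-$H$-free assumption; hence all edges of $G$ have multiplicity at most $h-1$. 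In the large-$k$ regime, a slightly modified construction on $r-1$ vertices produces an $H$-friendly induced submultigraph, to which \Cref{lem:H-friendly-submgph=>(r-1)-partite} applies (valid because $r\ge 4$) to force the $(r-1)$-partite structure of $G$.

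The main obstacle lies in the embedding lemma, where the $r = 4$-style exhaustive case analysis of \Cref{lem:4-vtx-4-cc-mgph_small-case} is no longer feasible. The key is to exploit the fact that $M$ exceeds the natural lower bound $(h-1)/\binom{r-1}{2}$ by only $O(h/r^4)$, so that the partial sums $\sum_{i\le j}w_H(e_i)\le jM$ are tightly controlled, and then to show that among the $r!$ vertex bijections $V(H)\to V(G_0)$, at least one induces an enumeration of $P(G_0)$ whose $j$-th term beats $jM$ for every $j$. A natural approach is to pair the largest-multiplicity edges of $H$ with the largest-multiplicity edges of $G_0$ via a greedy/matching argument; the nontrivial part is that this pairing must come from a vertex bijection, which forces us to argue either by averaging over bijections or by incrementally refining the vertex identification during the iterative construction of $G_0$ so that the eventual enumeration is automatically compatible.
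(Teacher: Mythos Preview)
Your overall architecture matches the paper exactly: reduce via \Cref{prop:min-deg-condition} and \Cref{prop:k-large_enough-to-check_k=k*}, iteratively select $v_1,\dots$ using \Cref{prop:min-deg=>vtx_large-d_T(v)}, establish an embedding lemma showing the resulting $(r-1)$-vertex induced submultigraph is $H$-friendly, and finish with \Cref{lem:H-friendly-submgph=>(r-1)-partite} when $k\ge k^*$.

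The gap is precisely where you flag it, and your suggested fixes (averaging over bijections, or incrementally refining the identification) are not how the paper proceeds and would likely be painful to push through. Globally sorting $P(G_0)$ in non-decreasing multiplicity and then hoping some vertex bijection realizes that order is the wrong move: the compatibility constraint is severe, and the bound $\sum_{i\le j}w_H(e_i)\le jM$ is too coarse on its own to guarantee success for any bijection.

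What the paper does instead is to \emph{fix the vertex bijection from the outset}---namely, $x_i\mapsto v_i$ in the order produced by the iterative construction---and then exploit the structure of that construction. For each $j$, sort only the edges from $v_j$ to $\{v_1,\dots,v_{j-1}\}$ as $e_{j,1},\dots,e_{j,j-1}$; the cumulative bound $\sum_{i<j}w(v_iv_j)\ge(j-1)\max\{h-1,\tfrac{r-2}{r-1}k\}$ together with $w\le k$ gives the explicit lower bound
\[
w(e_{j,i})\ \ge\ b_{j,i}\ \defeq\ \frac{i(r-1)-(j-1)}{i(r-2)}(h-1).
\]
The edge embedding order on $P(G_0)$ is then prescribed by a specific diagonal-style listing of the $e_{j,i}$ (all the $e_{\cdot,1}$'s, then all the $e_{\cdot,2}$'s, etc., with the $r-1$ ``maximal'' edges $e_{j,j-1}$ placed last). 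This order is compatible with the fixed bijection by construction, so no search over bijections is needed. Properness then reduces to the purely algebraic verification that the $m$-th listed edge has $b$-bound exceeding $(m-1)\alpha_r(h-1)$, where $\alpha_r=\frac{2+2/r^2}{(r-1)(r-2)}$; this is checked only at the endpoints $e_{r,i}$ and $e_{i+2,i}$ of each arithmetic-progression row, plus $e_{r-1,1}$. The $\calF_r$ hypothesis enters exactly here, as the cap $w_H\le\alpha_r(h-1)$, and the $2/r^2$ slack is what makes those endpoint inequalities go through.
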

    
    

We again prove a stronger lemma which implies \Cref{thm:ex-gph_r-vtx-mgph_in_Fr} straightforwardly as in the proof of \Cref{thm:ex-gph_4-vtx-4-cc-mgph}.
Again, this stronger lemma will be useful later when we prove \Cref{thm:ex-gph_stab_r-cc-mgph_k-small}.

\begin{lemma}\label{lem:ex-gph_r-vtx-r-cc-mgph_in_Fr_weak-deg-bd}
Suppose \(k\ge h\) and \(0\ll\frac{1}{n}\ll\delta\ll\frac{1}{k}<1\).
    Let \(H\) be an \(r\)-vertex multigraph in \(\calF_r\) with \(h\) edges and \(r\ge 5\), and \(k^*\defeq k^*(H)\).  Let \(G\) be a simply \(k\)-colored multicolored-\(H\)-free multigraph of order \(n\) such that
    \[
        \delta(G)\ge
        \begin{cases*}
            (1-\delta)(h-1)(n-1) & if \(h\le k<k^*\), \\
            (1-\delta)kd_{r-1}(n) & if \(k\ge k^*\).
        \end{cases*}
    \]
    \begin{enumerate}
        \item If \(h\le k<k^*\), then \(w(e)\le h-1\) for all \(e\in P(G)\).
        \item If \(k\ge k^*\) and \(G\) has an edge with multiplicity at least \(h\), then \(G\) is (\(r-1\))-partite.
    \end{enumerate}
\end{lemma}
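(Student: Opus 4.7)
The proof follows the template of Lemma~\ref{lem:ex-gph_4-vtx-4-cc-mgph_weak-deg-bd} with two structural adaptations: the skeleton is now built on $r-1$ vertices rather than on three, and the explicit case analysis of Lemma~\ref{lem:4-vtx-4-cc-mgph_small-case} is replaced by a uniform embedding argument that exploits the balanced-multiplicity hypothesis $H \in \calF_r$. In both parts of the lemma we assume for contradiction the existence of an edge $v_1v_2$ with $w(v_1v_2) \geq h$, and aim to construct $v_3, \ldots, v_{r-1}$ so that $K \defeq G[\{v_1, \ldots, v_{r-1}\}]$ is $H$-friendly with $a = 1$. Once this is established, Part~2 is an immediate consequence of Lemma~\ref{lem:H-friendly-submgph=>(r-1)-partite}. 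For Part~1, we find via Proposition~\ref{prop:min-deg=>vtx_large-d_T(v)} a vertex $v$ with $d_{V_{r-1}}(v) \geq (r-1)(h-1)$ and check $w(v_iv) \geq (r-1)(h-1) - (r-2)k \geq 1$ for all $i$ (the arithmetic is identical to Case~1 of Lemma~\ref{lem:ex-gph_4-vtx-4-cc-mgph_weak-deg-bd}), whereupon the $H$-friendliness of $K$ supplies a multicolored $H$ in $G$, contradicting the hypothesis.

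The skeleton is constructed iteratively. At each step $i \in \{3, \ldots, r-1\}$, we select $v_i \in V(G) - V_{i-1}$ so that $w(v_jv_i) \geq p$ for every $j < i$, for a threshold $p$ chosen with care. A double-counting estimate shows that the number of vertices $v$ with $w(v_jv) < p$ is at most
\[
    \frac{(n-1)\bigl(k - (1-\delta)(h-1)\bigr)}{k - p + 1} + O(\delta n),
\]
and a union bound over $j < i$ leaves many viable candidates as long as $p$ is below a threshold depending on $k$, $h$, and $i$; the $\calF_r$ bound $\mu \leq \frac{2(h-1)}{(r-1)(r-2)}(1+1/r^2)$ and the reduction $k \leq \lceil k^* \rceil$ (obtained from Proposition~\ref{prop:k-large_enough-to-check_k=k*} in Part~2) together permit us to take $p$ comfortably above $\mu$. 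We then apply Proposition~\ref{prop:min-deg=>vtx_large-d_T(v)} restricted to this candidate set to pick $v_i$ with $d_{V_{i-1}}(v_i)$ close to the maximum possible, so that the individual pair multiplicities in $K$ admit a second-layer lower bound $w(v_jv_i) \geq d_{V_{i-1}}(v_i) - (i-2)k$ derived from the sum constraint and the per-edge upper bound.

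The heart of the proof is the embedding lemma: for every $v \in V - V_{r-1}$ satisfying $d_{V_{r-1}}(v) \geq (r-2)\max\{k, k^*\}$ and $w(v_iv) \geq 1$ for all $i$, the multigraph $G_0 \defeq G[V_{r-1} \cup \{v\}]$ contains a multicolored copy of $H$. From the degree sum on $v$ and $w(v_iv) \leq k$, one shows that at most one index $i^*$ has $w(v_{i^*}v) \leq \mu$, while the remaining $r-2$ values $w(v_iv)$ are of order $k^*$. We define $\phi \colon V(H) \to V(G_0)$ mapping the critical edge $xy$ (with $w_H(xy)=1$) to $vv_{i^*}$, mapping the maximum-multiplicity edge of $H$ into the pair $v_1v_2$ (which has multiplicity at least $h$), and distributing the remaining vertices bijectively. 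Ordering $P(H)$ so that the images in $G_0$ have increasing multiplicity, we verify the prefix-sum condition of Proposition~\ref{prop:proper-edge-emb-orders}: at step $j$, the cumulative sum $\sum_{i \le j} w_H(e_i)$ is bounded above using $w_H \leq \mu$ and the total $\sum_e w_H(e) = h$, while the image multiplicity $w_G(\phi(e_j))$ admits lower bounds combining the refined skeleton inequalities with the distribution of $v$'s neighbourhood. The chief obstacle is this prefix-sum verification at intermediate steps, where the balance of $\mu$ encoded in the definition of $\calF_r$ is used essentially --- without it, an explicit case analysis in the style of Lemma~\ref{lem:4-vtx-4-cc-mgph_small-case} would be unavoidable.
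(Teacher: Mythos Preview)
Your overall architecture matches the paper's: build an $(r-1)$-vertex skeleton $K$ starting from a high-multiplicity edge $v_1v_2$, show $K$ is $H$-friendly, and then invoke Lemma~\ref{lem:H-friendly-submgph=>(r-1)-partite} (Part~2) or add one more vertex via Proposition~\ref{prop:min-deg=>vtx_large-d_T(v)} (Part~1). But the paper's execution is considerably simpler and avoids the complications you introduce.

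In the paper the skeleton is built with \emph{no} per-edge thresholding: one just applies Proposition~\ref{prop:min-deg=>vtx_large-d_T(v)} verbatim at each step $j=3,\dots,r-1$ to secure only the \emph{sum} bound $\sum_{i<j} w(v_iv_j)\ge (j-1)\max\{h-1,\frac{r-2}{r-1}k\}$. All of the embedding work is then packaged into a separate Lemma~\ref{lem:r-vtx-Fr_embedding}, which takes precisely these sum hypotheses. There the individual lower bounds are derived \emph{a posteriori}: ordering $\{v_iv_j:i<j\}$ by multiplicity as $e_{j,1},\dots,e_{j,j-1}$, the sum bound plus the cap $w\le k$ forces
\[
    w(e_{j,i}) \ \ge\ \frac{i(r-1)-(j-1)}{i(r-2)}(h-1)\ \eqdef\ b_{j,i},
\]
and the embedding of $H$ uses a fixed ``staircase'' order of $P(G_0)$ that ends with the diagonal $e_{r,r-1},\dots,e_{2,1}$. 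The prefix-sum verification then reduces to two explicit one-variable inequalities (checked at the endpoints $e_{r,i}$ and $e_{i+2,i}$ of each row), and this is exactly where the $\calF_r$ multiplicity cap $\alpha_r(h-1)$ is spent. Your double-counting to enforce $w(v_jv_i)\ge p$ at construction time is therefore an unnecessary detour: the paper gets all the individual lower bounds it needs from the sums alone.

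There is also a genuine error in your write-up: you invoke Proposition~\ref{prop:k-large_enough-to-check_k=k*} to reduce to $k\le\lceil k^*\rceil$ in Part~2, but that proposition concerns $k$-color \emph{extremal} multigraphs, whereas Lemma~\ref{lem:ex-gph_r-vtx-r-cc-mgph_in_Fr_weak-deg-bd} is a structural statement about an arbitrary multicolored-$H$-free $G$ with a minimum-degree hypothesis. No such reduction is available here. Fortunately it is also unnecessary --- both in the paper's argument (the $b_{j,i}$ computation is uniform in $k$) and in yours (your bad-vertex bound $(n-1)\bigl(k-(1-\delta)k\frac{r-2}{r-1}\bigr)/(k-p+1)\approx \frac{n}{r-1}\cdot\frac{k}{k-p}$ only loosens the constraint on $p$ as $k$ grows). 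So the step is both illegitimate and superfluous; drop it and the rest of your sketch survives, though the paper's route via sum-only hypotheses and the explicit $b_{j,i}$ ordering is cleaner and leaves less to verify.
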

\begin{proof}
    We use the following lemma which allows to find an \(H\)-friendly submultigraph of \(G\). The proof works only when the edge multiplicities of \(H\) are bounded, which is the reason why we focus on \(\calF_r\).
    
    \begin{lemma}\label{lem:r-vtx-Fr_embedding}
        Let \(H\) be an \(r\)-vertex multigraph in \(\calF_r\) with \(h\) edges, and let \(k\ge h\) and \(k^*\defeq k^*(H)\). Let \(K\) be a simply \(k\)-colored multigraph on \(\{v_1,\dots,v_{r-1}\}\) such that \(w(v_1v_2)\ge h\) and for each \(3\le j\le r-1\),
        \[
            \sum_{i=1}^{j-1} w(v_iv_j)\geq (j-1)\max\biggl\{(h-1),\frac{r-2}{r-1}k\biggr\}.
        \]
        Then \(K\) is \(H\)-friendly.
    \end{lemma}
    
    First let \(h\le k<k^*\). Suppose to the contrary that there is an edge \(v_1v_2\) with multiplicity at least \(h\). By \Cref{prop:min-deg=>vtx_large-d_T(v)}, we can find \(v_3,\dots,v_r\) such that \(\sum_{i=1}^{j-1} e(v_iv_j)\ge (j-1)(h-1)\) for each \(3\le j\le r\). By \Cref{lem:r-vtx-Fr_embedding} we see \(G[\{v_1,\dots,v_{r-1}\}]\) is \(H\)-friendly, so \(G[\{v_1,\dots,v_{r-1},v_r\}]\) contains a multicolored \(H\), a contradiction.
    
    Next let \(k\ge k^*\) and \(v_1v_2\) be an edge with multiplicity at least \(h\). By \Cref{prop:min-deg=>vtx_large-d_T(v)}, we can find \(v_3,\dots,v_{r-1}\) such that \(\sum_{i=1}^{j-1} e(v_iv_j)\ge (j-1)\frac{r-2}{r-1}k\) for each \(3\le j\le r-1\). \Cref{lem:r-vtx-Fr_embedding} says that \(G[\{v_1,\dots,v_{r-1}\}]\) is \(H\)-friendly, thus \Cref{lem:H-friendly-submgph=>(r-1)-partite} applies so that \(G\) is (\(r-1\))-partite.
\end{proof}

We finish this section with the following proof of \Cref{lem:r-vtx-Fr_embedding}.
\begin{proof}[Proof of \Cref{lem:r-vtx-Fr_embedding}]
    Let \(v_r\) be a vertex not in \(K\) such that \(\sum_{i=1}^{r-1}w(v_iv_r)\ge (r-2)\max\{k,k^*\}\) and \(\min\{w(v_iv_r)\}\ge1\), and let \(G_0\defeq G[V(K)\cup \{v\}]\). We need to show \(G_0\) contains a multicolored \(H\).
    For each \(2\le j\le r\), enumerate \(\{v_iv_j:1\le i\le j-1\}\) into an ascending order \(e_{j,1}, \dots,e_{j,j-1}\)  of multiplicities. Since \(w(e)\le k\) for each \(e\in P(G)\), we see that for all \(2\le i\le r\) and \(1\le i\le j-1\),
    \begin{align*}
        w(e_{j,i})
        &\ge
        \frac{1}{i}\left( \sum_{\ell =1}^{j-1}w(v_{\ell}v_j) - \sum_{\ell=i+1}^{j-1} w(e_{j,\ell})\right) \\
        &\ge
        \frac{1}{i}\left(  (j-1)\max\left\{h-1, \frac{r-2}{r-1}k \right\}  - (j-1-i)k\right)
        \\&\ge
        \frac{1}{i}\left( (j-1)\cdot \max\left\{ h-1, \frac{r-2}{r-1}k \right\} -(j-1-i)\cdot \frac{r-1}{r-2}\max\left\{h-1, \frac{r-2}{r-1}k \right\}\right)
        \\&\ge
        \frac{i(r-1)-(j-1)}{i(r-2)}\max\left\{h-1, \frac{r-2}{r-1}k \right\}\\
        &\geq \frac{i(r-1)-(j-1)}{i(r-2)}(h-1)
        \eqdef b_{j,i}.
    \end{align*}
    We have \(w(e_{2,1})=w(v_1v_2)\ge h\) by the assumption. If \(h\le k<k^*\), then the third inequality is strict, so the above gives \(w(e_{r,1})>b_{r,1}=0\), i.e., \(w(e_{r,1})\ge 1\); if \(k\ge k^*\) we have \(w(e_{r,1})\ge1\) by the assumption.
    We embed \(H\) in \(G_0\) so that a critical edge of \(H\) is embedded in \(e_{r,1}\) and the vertex-edge incidence relation is preserved, with the edge embedding order of \(P(H)\) given as
    \begin{align*}
        &e_{r,1},\,e_{r-1,1},\,\dots,\,e_{4,1},\,e_{3,1}, \\
        &e_{r,2},\,e_{r-1,2},\,\dots,\,e_{4,2}, \\
        &\quad\vdots \\
        &e_{r,r-3},\,e_{r-1,r-3}, \\
        &e_{r,r-2}, \\
        &e_{r,r-1},\,e_{r-1,r-2},\,\dots,\,e_{2,1}.
    \end{align*}
    Since \(w(e_{j,j-1})\ge b_{j,j-1}=h-1\) for \(3\le j\le r\) and \(w(e_{2,1})\ge h\), this is a proper embedding order if the \(m\)-th edge for \(2\le m\le\binom{r}{2}-(r-1)\), has multiplicity at least \((m-1)\cdot \alpha_r(h-1)+1\) where \(\alpha_r\defeq\frac{2+2/r^2}{(r-1)(r-2)}\) (recall that \(w(e)\le\alpha_r(h-1)\) for all edges \(e\) of \(H\) by the definition of \(\calF_r\)). Since each \((b_{j,i})_{j=r}^{i+2}\) for a fixed \(i\) is an arithmetic progression, it suffices to check this for the edges 
    \[e_{r,i} \text{ and } e_{i+2,i} \text{ for each } 1\le i\le r-2, \text{ as well as } e_{r-1,1}.\] 

    We start with the edges $e_{r,i}$. As we embed a critical edge of $H$ in $e_{r,1}$, we only need to consider $2 \le i \le r-2$. For \(2\le i \le r-2\), the edge \(e_{r,i}\) is the \(1+\sum_{\ell=1}^{i-1}(r-(\ell+1))=(i-1)(r-1)-\frac{i(i-1)}{2}+1\eqdef m_{1,i}\)-th element in the list. Then we need to show
    \[
        F_{1,i}\defeq b_{r,i}-(m_{1,i}-1)\alpha_r(h-1)> 0.
    \]
    Note that for \(2\le i\le r-2\), we have
    \[ 
        \frac{b_{r,i}}{h-1}
        = \frac{r-1}{r-2} - \frac{r-1}{i(r-2)}
        = \frac{i-1}{r-2}\cdot \frac{r-1}{i}
    \]
    and 
    \begin{align*}
    (m_{1,i}-1)\alpha_r &= (i-1)\left((r-1)-\frac{i}{2} \right) \frac{1}{r-2}\left(\frac{2}{r-1} +\frac{2}{r^2(r-1)}\right) \\
    &= \frac{i-1}{r-2}\left( 2 - \frac{i}{r-1} + \frac{2}{r^2} - \frac{i}{r^2(r-1)} \right).
    \end{align*}

    Hence, we have
    \begin{align*}
    \frac{r-2}{i-1}\cdot \frac{F_{1,i}}{(h-1)} &= \frac{r-2}{i-1}\left(\frac{b_{r,i}}{h-1} - (m_{1,i}-1)\alpha_r \right)
    \\&= 
    \left(\frac{r-1}{i} - 2 + \frac{i}{r-1}\right) - \left(\frac{2}{r^2}- \frac{i}{r^2(r-1)}\right) \\
    &=  \frac{(r-1-i)^2}{(r-1)i} + \frac{i^2-2(r-1)i}{r^2(r-1)i} \\
    &= 
    \frac{r^2(r-1-i)^2}{r^2(r-1)i} + \frac{(r-1-i)^2 - (r-1)^2}{r^2(r-1)i} 
     >0
    \end{align*}
    The final inequality holds as $r^2(r-1-i)^2\geq (r-1)^2$.
    
    
    Next, as the edge $e_{r,r-2}$ is already considered in the last case, we only need to deal with the edges \(e_{i+2,i}\) for $1 \le i \le r-3$. For $1 \le i \le r-3$, the edge \(e_{i+2,i}\) is the \(\sum_{\ell=1}^{i} (r-(\ell+1))=i(r-1)-\frac{i(i+1)}{2}\eqdef m_{2,i}\)-th element in the list. Then we need to show
    \[
        F_{2,i}\defeq b_{i+2,i}-(m_{2,i}-1)\alpha_r(h-1) > 0.
    \]
    We have
    \[\frac{b_{i+2,i}}{h-1} = \frac{i(r-1)-i-1}{i(r-2)} = \frac{i}{r-2} \left( \frac{r-2}{i}- \frac{1}{i^2}\right)\]
    and
    \begin{align*}
    (m_{2,i}-1)\alpha_r &=
    \left(i(r-1) - \frac{i(i+1)}{2} -1\right) \alpha_r\\
    &= \frac{i}{r-2}\left( (r-1) - \frac{i(i+1)+2}{2i}\right)\left(\frac{2}{r-1} + \frac{2}{r^2(r-1)} \right) \\
    &\leq 
    \frac{i}{r-2}\left( 2 - \frac{i(i+1)+2}{i(r-1)} + \frac{2}{r^2}\right) \\
    &= \frac{i}{r-2}\left( 2 -\frac{i+1}{r-1} - \frac{2}{i(r-1)} + \frac{2}{r^2}\right)
    \end{align*}
    Hence, we have
    \begin{align*}
    \frac{r-2}{i}\cdot \frac{F_{2,i}}{h-1}
    &= \frac{r-2}{i}\biggl( \frac{b_{i+2,i}}{h-1}-(m_{2,i}-1)\alpha_r \biggr)
    \\&\ge 
    \biggl(\frac{r-2}{i} - \frac{1}{i^2}\biggr) - \biggl(2 - \frac{i+1}{r-1} - \frac{2}{i(r-1)} + \frac{2}{r^2} \biggr)
    \\
    &> \left(\frac{r-2}{i}+ \frac{i+1}{r-1}-2\right) - \frac{1}{i^2} + \left(\frac{2}{i(r-1)} - \frac{2}{(r-1)^2}\right) \\
    &\ge \frac{(r-i-1)(r-i-2)}{i(r-1)} - \frac{1}{i^2} \\
    &= \frac{i(r-i-1)(r-i-2) - (r-1)}{i^2(r-1)} \ge 0.
    \end{align*}
    The last inequality holds as we have $i(r-i-1)(r-i-2) - (r-1)\ge 0$ because $1 \le i \le r-3$. 
    
    Finally, observe that $w(e_{r-1,1}) \ge \frac{h-1}{r-2} > \alpha_r(h-1)$. This finishes the verification that the edge embedding is indeed a proper one, which ends the proof of \Cref{lem:r-vtx-Fr_embedding}.
\end{proof}

\section{Stability for \texorpdfstring{\(r\)}{r}-vertex \texorpdfstring{\(r\)}{r}-color-critical multigraphs}
\label{sec:ex-gph_stab_good-mgph}

In previous two sections, we have determined \(\Ex_k(n,H)\) for all 
\(4\)-vertex \(4\)-color-critical multigraph or an \(r\)-vertex multigraph in \(\calF_r\) with \(r\ge 5\).

We now prove that for such a graph \(H\), if a simply \(k\)-colored multigraph \(G\) with no multicolored copy of \(H\) has close-to-maximum number of edges, then \(G\) is close to one of two natural extremal graphs.
We use the ideas in \cite[Lemma~2.3]{roberts2018stability} to prove this. For the convenience of writing, we use the following notation. For multigraphs \(G_1\) and \(G_2\) of the same order, define their \emph{symmetric difference}
\[
    |G_1\symmdiff G_2|\defeq \min_{\substack{G_2'\cong G_2 \\ V(G_2')=V(G_1)}} \sum_{e\in P(G_1)} |w_{G_1}(e)-w_{G_2'}(e)|
\]
as the minimum number of edges needed to be changed from \(G_1\) to make it isomorphic to \(G_2\).

\begin{theorem}\label{thm:ex-gph_stab_r-cc-mgph_k-small}
    Suppose \(h\le k\) and \(0<\frac{1}{n}<\eta\ll\varepsilon,\frac{1}{k}<1\).
    Let \(H\) be an \(r\)-vertex \(r\)-color-critical multigraph with \(h\) edges with \(r\geq 4\).
    Furthermore, if \(r\ge 5\), then assume \(H\in \calF_r\). Let \(G\) is a simply \(k\)-colored multicolored-\(H\)-free multigraph of order \(n\) such that
    \[
        e(G)\ge
        \begin{cases*}
            (h-1)\binom{n}{2} - \eta n^2 & if \(k<k^*\defeq k^*(H)\), \\
            kT_{r-1}(n) - \eta n^2 & if \(k\ge k^*\).
        \end{cases*}
    \]
    Then we have:
    \begin{enumerate}
        \item if \(k=k^*\), then either \(|G\symmdiff (h-1)K_n|\le\varepsilon n^2\) or \(|G\symmdiff kT_{r-1}(n)|\le\varepsilon n^2\);
        \item if \(k\neq k^*\), then \(|G\symmdiff \Ex_k(n,H)|\le\varepsilon n^2\).
    \end{enumerate}
\end{theorem}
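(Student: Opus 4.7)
My plan is a two-step argument. First, a minimum-degree cleaning reduces to a subgraph satisfying the hypothesis of the structural results in \Cref{sec:4-vtx-4-cc-mgph_good} and \Cref{sec:r-vtx-mgph_in_Fr_good}. Second, those structural results pin down the shape of the cleaned subgraph, and re-inserting the deleted vertices yields the symmetric-difference bound for $G$ itself. Throughout, let $A(n)$ denote $(h-1)K_n$ when $k < k^*$ and $kT_{r-1}(n)$ when $k \geq k^*$, and write $d^*(n)\defeq\delta(A(n))$. Fix an auxiliary parameter $\delta$ with $\eta \ll \delta \ll \varepsilon,1/k$.

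For the cleaning, iteratively remove any vertex whose current degree is less than $(1-\delta)d^*(n_i)$, where $n_i$ is the current order. The resulting multigraph $G'$ on $n'$ vertices is still multicolored-$H$-free and has $\delta(G') \geq (1-\delta) d^*(n')$. Because at most $(n-n') \cdot d^*(n)$ edges are removed while $e(A(n)) - e(A(n'))$ is of order $\Theta((n-n')n)$, the hypothesis $e(G) \geq e(A(n)) - \eta n^2$ forces $n - n' \leq \sqrt{\eta}\,n$.

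Now apply \Cref{lem:ex-gph_4-vtx-4-cc-mgph_weak-deg-bd} if $r = 4$, or \Cref{lem:ex-gph_r-vtx-r-cc-mgph_in_Fr_weak-deg-bd} if $r \geq 5$, to $G'$. If $k < k^*$, every edge of $G'$ has multiplicity at most $h-1$; the lower bound on $e(G')$ then forces all but $O(\sqrt{\eta})n^2$ edges of $G'$ to have multiplicity exactly $h-1$, giving $|G' \symmdiff (h-1)K_{n'}| \leq \varepsilon n^2/2$. If $k > k^*$, the comparison $kt_{r-1}(n') > (h-1)\binom{n'}{2}$ together with the lower bound on $e(G')$ forces some edge of $G'$ to have multiplicity at least $h$, hence the lemma gives that $G'$ is $(r-1)$-partite; a Tur\'an-type stability argument (applied to each of the $k$ nested color classes) then yields $|G' \symmdiff kT_{r-1}(n')| \leq \varepsilon n^2/2$. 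The borderline case $k = k^*$ bifurcates: either $G'$ contains an edge of multiplicity $\geq h$ (and we are in the $(r-1)$-partite scenario) or not (and we are in the multiplicity-at-most-$(h-1)$ scenario); each branch yields one of the two allowed conclusions. Finally, re-inserting the $\leq \sqrt{\eta}\,n$ deleted vertices contributes at most $\sqrt{\eta}\,n \cdot kn = O(\sqrt{\eta})n^2$ to the symmetric difference with the appropriate $A(n)$, which is comfortably absorbed into $\varepsilon n^2$.

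The main obstacle is the step that promotes ``$G'$ is $(r-1)$-partite with nearly maximal edge count'' into ``$G'$ is $O(\sqrt{\eta})n^2$-close to $kT_{r-1}(n')$.'' By \Cref{prop:reform_to_nested-G} we may assume the simple $k$-coloring of $G'$ is nested; each color is then a simple $(r-1)$-partite graph on $n'$ vertices with at most $t_{r-1}(n')$ edges, and the total is forced to be nearly $kt_{r-1}(n')$. Applying the Erd\H{o}s--Simonovits stability theorem color by color, compatibly with the global $(r-1)$-partition of $G'$, yields the claim once one checks that the partitions for different colors must, up to a small error, coincide.
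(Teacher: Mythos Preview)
Your two-step plan---pass to a large-minimum-degree induced submultigraph and then invoke \Cref{lem:ex-gph_4-vtx-4-cc-mgph_weak-deg-bd} or \Cref{lem:ex-gph_r-vtx-r-cc-mgph_in_Fr_weak-deg-bd}---is exactly the paper's strategy. Two of your steps, however, do not go through as written.

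\emph{The deletion bound.} The sentence ``at most $(n-n')d^*(n)$ edges are removed while $e(A(n))-e(A(n'))$ is of order $\Theta((n-n')n)$'' cannot by itself force $n-n'\le\sqrt{\eta}\,n$: both quantities have the \emph{same} leading behaviour (indeed $e(A(n))-e(A(n'))=\sum_{i=n'+1}^{n}d^*(i)\le (n-n')d^*(n)$), so comparing them yields no tension. What is missing is an \emph{upper} bound on $e(G')$, and this is where the paper invokes the goodness of $H$ established in \Cref{thm:ex-gph_4-vtx-4-cc-mgph,thm:ex-gph_r-vtx-mgph_in_Fr}: once $|G'|>n_0(H)$ one has $e(G')\le e(A(|G'|))$. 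Coupling this with the sharper fact that each deleted vertex has degree strictly below $(1-\delta)d^*(\cdot)$ gives $\delta\bigl(e(A(n))-e(A(n'))\bigr)<\eta n^2$, hence $n-n'=O(\eta/\delta)n$. Without the goodness input nothing prevents the iterative deletion from stripping away almost all of $V(G)$.

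\emph{The $(r-1)$-partite step.} The detour through \Cref{prop:reform_to_nested-G} and colour-by-colour Erd\H{o}s--Simonovits stability is unnecessary and does not close: the individual colours need not each have nearly $t_{r-1}(n')$ edges, so stability cannot be applied to them separately, and the ``compatibility of partitions'' issue you flag is a red herring. The structural lemma already hands you a \emph{single} $(r-1)$-partition $V_1,\dots,V_{r-1}$ of the multigraph $G'$, whence trivially $e(G')\le k\sum_{i<j}|V_i||V_j|$. The minimum-degree bound on $G'$ forces $e(G')\ge kt_{r-1}(n')-O(\delta)(n')^2$, so the part sizes are within $O(\delta^{1/2})n'$ of one another, and $|G'\symmdiff kT_{r-1}(n')|$ is then bounded directly by $kt_{r-1}(n')-e(G')$ plus an imbalance error. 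This is the paper's argument; no simple-graph stability theorem or nesting assumption is needed.
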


We remark that when $k=k^*$, the difference between the numbers of edges in $(h-1)K_n$ and $kT_{r-1}(n)$ is $o(n^2)$ (in fact, it is $O(n)$), thus in this case, we consider two possibilities in \Cref{thm:ex-gph_stab_r-cc-mgph_k-small}.


In order to prove this theorem, we will show that for those choices of $H$, the graph \(G\) in \Cref{thm:ex-gph_stab_r-cc-mgph_k-small} have a large minimum degree after deleting a small number of vertices.
Then we may apply \Cref{lem:ex-gph_4-vtx-4-cc-mgph_weak-deg-bd} or \Cref{lem:ex-gph_r-vtx-r-cc-mgph_in_Fr_weak-deg-bd} to the remaining graph with high minimum degree. 

We first collect the following useful proposition.

\begin{proposition}\label{prop: deletion J}
    Suppose $0< \frac{1}{n} \ll \delta < 1$.
    Let $G$ be an $n$-vertex multigraph with $d\binom{n}{2}$ edges. If $B\subseteq V(G)$ is a vertex set with $|B|=\frac{1}{2}\delta n$ and every $v\in B$ satisfies $d_G(v)< (1 -\delta)dn$, then $G-B$ has at least $\left(1+\frac{1}{2}\delta^2\right)d\binom{|G-B|}{2}$ edges.
\end{proposition}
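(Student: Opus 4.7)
The plan is a direct edge-count. The key observation is that removing $B$ decreases the edge total by at most $\sum_{v \in B} d_G(v)$, and the hypothesis on the degrees of vertices in $B$ gives a clean lower bound on $e(G-B)$ which I then compare to $(1+\delta^2/2)d\binom{|G-B|}{2}$.

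The first step will be the decomposition $e(G) - e(G-B) = e(B, V\setminus B) + e(B) \le \sum_{v\in B} d_G(v)$, together with the hypothesis $d_G(v) < (1-\delta)dn$ for all $v\in B$ and $|B| = \delta n/2$, which yields
\[
    e(G-B) > d\binom{n}{2} - \tfrac{1}{2}\delta(1-\delta)dn^2 = \tfrac{dn^2}{2}\bigl(1 - \tfrac{1}{n} - \delta + \delta^2\bigr).
\]
The second step will be to expand the target: since $|G-B| = (1-\delta/2)n$, I get
\[
    \bigl(1+\tfrac{\delta^2}{2}\bigr) d\binom{|G-B|}{2} \le \tfrac{dn^2}{2}\bigl(1+\tfrac{\delta^2}{2}\bigr)\bigl(1-\tfrac{\delta}{2}\bigr)^2 = \tfrac{dn^2}{2}\bigl(1 - \delta + \tfrac{3\delta^2}{4} - \tfrac{\delta^3}{2} + \tfrac{\delta^4}{8}\bigr).
\]
Subtracting, the gap between the lower bound and this upper bound is at least $\tfrac{dn^2}{2}\bigl(\tfrac{\delta^2}{4} + \tfrac{\delta^3}{2} - \tfrac{\delta^4}{8} - \tfrac{1}{n}\bigr)$, which is positive for every $0<\delta<1$ whenever $1/n \ll \delta$, as assumed.

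There is no real obstacle: the proposition is designed so that the quadratic-in-$\delta$ savings (namely the $\delta^2$ coming from the $(1-\delta)$ slack in the degree hypothesis applied to a $\delta/2$-fraction of vertices) comfortably beats the $\tfrac{3\delta^2}{4}$ contribution arising from the target factor $(1+\delta^2/2)(1-\delta/2)^2$, leaving plenty of room to absorb the $1/n$ error term.
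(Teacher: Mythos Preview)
Your proof is correct and follows essentially the same approach as the paper: both bound $e(G-B)\ge e(G)-\sum_{v\in B}d_G(v)$ and then compare to the target using the hypotheses on $|B|$ and on the degrees in $B$. The only difference is bookkeeping---the paper expands $\binom{n}{2}=\binom{|G-B|}{2}+|G-B||B|+\binom{|B|}{2}$ and isolates an extra $\frac{1}{4}d\delta^2 n^2$, whereas you expand everything in powers of $\delta$---but the computations are equivalent.
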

\begin{proof}
    Consider $J\defeq G-B$, then we have
    \begin{align*}
		e(J)
		\ge e(G)-\sum_{v\in B} d(v)
	    &> d\binom{|G-B|+|B|}{2} - (dn-\delta dn)|B|
	    \\&= d\binom{|G-B|}{2} + d|G-B||B| + d\binom{|B|}{2} - d(|G-B|+|B|)|B| + \delta d n |B| 
	    \\&> d\binom{|G-B|}{2} + d|B|(\delta n- |B|) 
	    \\&= d\binom{|G-B|}{2} + \frac{1}{4}d\delta^2 n^2
	    > \biggl(1+\frac{1}{2}\delta^2\biggr)d\binom{|G-B|}{2}. \qedhere
    \end{align*}
\end{proof}

\begin{proof}[Proof of \Cref{thm:ex-gph_stab_r-cc-mgph_k-small}]
    \Cref{thm:ex-gph_4-vtx-4-cc-mgph,thm:ex-gph_r-vtx-mgph_in_Fr} show that \(H\) is good in either of the cases.
    Note that if $k\neq k^*$, then $|k-k^*|\geq \frac{1}{r-2}$ from the integrality of $k$ and the definition of $k^*$.
    Note that the the minimum degree $\delta(\Ex_k(n,H))$ is $\max\bigl\{\frac{r-2}{r-1}k,h-1\bigr\}(n-1)+O(1) = \max\bigl\{k,\frac{r-1}{r-2}(h-1)\bigr\}d_r(n)+O(1)$.
    
	Let \(\delta\) be such that \(0<\eta\ll\delta\ll\varepsilon,\frac{1}{k}<1\). We first show that there is a submultigraph of \(G\) of order at least \((1-\delta^{1/2})n\) with minimum degree at least \((1-\delta^{1/2})\max\left\{k,\frac{r-1}{r-2}(h-1)\right\}d_r(n)\).

	Let $d'>0$ be the number such that $G$ contains $d'\binom{n}{2}$ edges, then $d'\geq  \max\left\{\frac{r-2}{r-1}k, (h-1)\right\}-2\eta$.
	Define
	\[
		L\defeq\{v\in G : d(v) < (1 - \delta)d'n\}.
	\]
	
    If $|L|\geq \delta n/2$, then choose any \(B\subseteq L\) with \(|B|=\delta n/2\). 
    By \Cref{prop: deletion J}, we conclude that 
    the multigraph $G-B$ contains at least the following number of edges:
    \begin{align*} \left(1+\frac{1}{2}\delta^2 \right)d'\binom{|G-B|}{2} &>
    \left(\max\left\{\frac{r-2}{r-1}k, (h-1)\right\}-2\eta + \frac{1}{2}\delta^2\right)\binom{|G-B|}{2}  \\ &> \max\left\{k,\frac{r-1}{r-2}(h-1)\right\} t_{r-1}(|G-B|). 
    \end{align*}

	On the other hand, \(G-B\) does not contain a multicolored copy of \(H\) and \(|G-B|>n/2 > \eta^{-1}/2\) with \(\eta \ll 1/h\). Thus since \(H\) is good, we have that \(e(G-B)\le \max\{(h-1)\binom{|G-B|}{2},kt_{r-1}(|G-B|)\} \le \max\{k,\frac{r-1}{r-2}(h-1)\} t_{r-1}(|G-B|)\),
	a contradiction.
	Hence, we have $|L|<\delta n/2$.
	
	Now consider the graph \(J\defeq G-L\). Then \(\sum_{v\in L}d(v)\le kn|L|\), and
	\begin{equation}\label{eq:(thm:ex-gph_stab_r-cc-mgph_k-small)-pf_mindeg-J}
		\delta(J)\ge (1-\delta)\max\left\{k,\frac{r-1}{r-2}(h-1)\right\}d_r(n)-k|L|>(1-\delta^{1/2})\max\left\{k,\frac{r-1}{r-2}(h-1)\right\}d_r(n).
	\end{equation}
	By \Cref{lem:ex-gph_4-vtx-4-cc-mgph_weak-deg-bd} for \(r=4\) or \Cref{lem:ex-gph_r-vtx-r-cc-mgph_in_Fr_weak-deg-bd} for \(r\ge5\), we obtain that \(J\) is either a subgraph of $(h-1)K_{|J|}$ or an $(r-1)$-partite graph.
	
	First, suppose that \(J\) is a subgraph of \((h-1)K_{|J|}\). 
	If $k>k^*$, then 
	\[e(G)\geq \ex_k(n,H)-\eta n^2 \geq \left(k^*+ \frac{1}{r-2}\right)t_{r-1}(n)-\eta n^2
	> (h-1)\binom{n}{2},
	\]
	a contradiction. Hence we have $k\leq k^*$ and
	as we have \(e(J)\geq \delta(J)|J|/2 \geq (h-1)\binom{|J|}{2} - \delta^{1/3} n^2\) from \eqref{eq:(thm:ex-gph_stab_r-cc-mgph_k-small)-pf_mindeg-J}, we conclude that
		\begin{align*}
	    |G\symmdiff (h-1)K_n|
	    &\le  \sum_{e\in P(G)-P(J)}|(h-1)-w(e)| + \biggl( (h-1)\binom{|J|}{2} - e(J) \biggr)
	    \\&\le kn|L| + \biggl( (h-1)\binom{|J|}{2} - \biggl((h-1)\binom{|J|}{2} - \delta^{1/3} n \biggr) \biggr)
	    \\&\le 2\delta^{1/3} n^2 < \varepsilon n^2. 
	\end{align*}

    If \(J\) is not a subgraph of \((h-1)K_{|J|}\), then there is an edge \(v_1v_2\) in \(J\) with multiplicity at least \(h\). By \Cref{lem:ex-gph_4-vtx-4-cc-mgph_weak-deg-bd} for \(r=4\) or \Cref{lem:ex-gph_r-vtx-r-cc-mgph_in_Fr_weak-deg-bd} for \(r\ge 5\), we obtain that \(k\ge k^*\) and  \(J\) is an (\(r-1\))-partite graph with parts \(V_1,\dots,V_{r-1}\). If \(|V_i|-|V_j|\ge 2a\) for some \(i\neq j\), then \(e(J)\le t_{r-1}(|J|)-a^2\), thus this together with \eqref{eq:(thm:ex-gph_stab_r-cc-mgph_k-small)-pf_mindeg-J} implies that two classes can differ in size by at most \(2\delta^{1/5}n\). It follows that \(\bigl||V_i|-\frac{|J|}{r-1}\bigr|<2\delta^{1/5}n\) for each \(i\) and so by deleting at most \(kn(r-1)(2\delta^{1/5}n)<\delta^{1/6}n^2\) edges, we attain an (\(r-1\))-partite graph with class sizes equal to that of \(T_{r-1}(|J|)\), and the number of edges is at least \(kt_{r-1}(|J|)-2\delta^{1/6}n^2\). Therefore
	\[
	    |G\symmdiff kT_{r-1}(n)|\le |J\symmdiff kT_{r-1}(|J|)|+\sum_{v\in L}d(v) \le 3\delta^{1/6}n^2+kn|L| < \varepsilon n^2. \qedhere
	\]
\end{proof}

\section{Stability for \texorpdfstring{\(r\)}{r}-color-critical graphs on more than \texorpdfstring{\(r\)}{r} vertices}
\label{sec:mcol-reg-lem}

In this section, we prove the following lemma. This lemma states that for an \(r\)-color critical graph \(H\), if its color-reduced multigraph \(H_c\) satisfies stability, then \(H\) also satisfies stability. For our convenience, we assume that the coloring of the host multigraph $G$ is nested as discussed in the preliminaries.

\begin{lemma}\label{lem:ex-gph_stab_cc-gph_with_very-good-cred-mgph}
    Suppose \(k\ge h\), \(k^*\defeq k^*(r,h)\), and \(0<\frac{1}{n}\ll\eta\ll\mu,\frac{1}{k}\leq 1\).
    Let \(H\) be an \(r\)-color-critical graph with \(h\) edges and let \(H_c\) be the corresponding color-reduced multigraph. 
    If \(r\geq 5\), then assume \(H_c \in \mathcal{F}_r\).
    Suppose \(G\) is a simply \(k\)-nested-colored multicolored-\(H\)-free multigraph of order \(n\) such that
    \[
        e(G)\ge\begin{cases*}
            (h-1)\binom{n}{2}-\eta n^2 & if \(h\le k<k^*\), \\
            kt_{r-1}(n)-\eta n^2 & if \(k\ge k^*\).
        \end{cases*}
    \]
    Then we have either of the following:
    \begin{enumerate}
        \item If \(k<k^*\), then \(|G\symmdiff (h-1)K_n|\leq \mu n^2\);
        \item if \(k=k^*\), then either \(|G\symmdiff (h-1)K_n|\leq \mu n^2\) or \(|G\symmdiff kT_{r-1}(n)|\leq \mu n^2\);
        \item if \(k>k^*\), then \(|G\symmdiff kT_{r-1}(n)|\leq \mu n^2\).
    \end{enumerate}
\end{lemma}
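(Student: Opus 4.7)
The plan is to reduce Lemma~7.1 to Theorem~6.1 via the multicolor Szemer\'edi regularity lemma. Because the coloring of \(G\) is nested, view it as a chain \(G_1 \supseteq \cdots \supseteq G_k\) of simple graphs. Choose parameters \(0 < \varepsilon_0 \ll d \ll \mu, 1/k\) and apply the multicolor regularity lemma to obtain an equitable partition \(V_0, V_1, \ldots, V_M\) of \(V(G)\) with \(|V_0| \le \varepsilon_0 n\) and common block size \(m = |V_i|\), such that for every color \(\ell\) all but at most \(\varepsilon_0 M^2\) pairs \((V_i, V_j)\) are \(\varepsilon_0\)-regular in \(G_\ell\). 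Define the reduced multigraph \(R\) on \([M]\) by declaring \(ij \in R_\ell\) precisely when \((V_i, V_j)\) is \(\varepsilon_0\)-regular in \(G_\ell\) with density at least \(d\); the resulting \(k\)-coloring of \(R\) remains nested. A routine count gives
\[
    e(R) \ge \frac{e(G)}{m^2} - (\varepsilon_0 + d)k\binom{M}{2} \ge \begin{cases*} (h-1)\binom{M}{2} - \eta' M^2 & if $h \le k < k^*$, \\ k\, t_{r-1}(M) - \eta' M^2 & if $k \ge k^*$, \end{cases*}
\]
for some \(\eta' \ll \mu\).

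The key step is to show that \(R\) contains no multicolored copy of \(H_c\). Fix the critical coloring of \(H\) with classes \(U_1, \ldots, U_r\) defining \(H_c\), so \(w_{H_c}(ab) = e_H(U_a, U_b)\). Suppose for contradiction that \(R\) admits a multicolored \(H_c\), supported on vertices \(i_1, \ldots, i_r \in [M]\), with an injection of the \(h\) edge-slots of \(H_c\) into \([k]\); by the definition of \(R\), whenever a slot \(\{a,b\}\) receives color \(c\), the pair \((V_{i_a}, V_{i_b})\) is \(\varepsilon_0\)-regular in \(G_c\) with density at least \(d\). Bijectively identify the \(e_H(U_a, U_b) = w_{H_c}(ab)\) edges of \(H\) between \(U_a\) and \(U_b\) with the \(w_{H_c}(ab)\) colors assigned to the \(\{a,b\}\)-slot, so every edge \(xy\) of \(H\) obtains a distinct color \(c(xy) \in [k]\). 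The standard counting lemma for \(r\)-partite \(\varepsilon_0\)-regular systems then produces an embedding \(H \hookrightarrow V_{i_1} \cup \cdots \cup V_{i_r}\) with each \(x \in U_a\) landing in \(V_{i_a}\) and each edge \(xy\) landing in \(G_{c(xy)}\). Because the colors \(c(xy)\) are distinct across edges, this is a multicolored copy of \(H\) in \(G\), contradicting the hypothesis.

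Hence \(R\) is a simply \(k\)-nested-colored multicolored-\(H_c\)-free multigraph with near-extremal edge count, so Theorem~6.1 applies and yields that \(R\) is \(\mu_0\)-close to \((h-1)K_M\) when \(k < k^*\), to \(k T_{r-1}(M)\) when \(k > k^*\), and to one of the two when \(k = k^*\), for some \(\mu_0 \ll \mu\). To transfer this back, observe that for a regular pair \((V_i, V_j)\) with \(w_R(ij) = s\), the number of edges of \(G\) between \(V_i\) and \(V_j\) is at most \((s + dk)m^2\) and at least \((s - \varepsilon_0 k)m^2\), while irregular pairs and edges incident to \(V_0\) together contribute \(O((\varepsilon_0 + M^{-1}) k n^2)\). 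Matching these contributions against the \(m\)-blow-up of the extremal multigraph for \(R\), which agrees with \((h-1)K_n\) or with \(k T_{r-1}(n)\) up to \(O(\varepsilon_0 n^2)\) edge changes, yields \(|G \symmdiff \Ex_k(n, H)| \le \mu n^2\) in each regime.

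The main obstacle is the embedding step in the second paragraph. Its content is that a multicolored \(H_c\) in \(R\) specifies, via the bijection between \(E(H)\) and the \(h\) edge-slots of \(H_c\), a prescribed color for every edge of \(H\); the \(r\)-partite counting lemma then supplies the embedding provided \(\varepsilon_0 \ll d \ll 1/h\). The remainder is routine bookkeeping, arranging \(\varepsilon_0, d\) small enough and \(M\) large enough that the cumulative loss through the regularity approximation stays well below \(\mu n^2\).
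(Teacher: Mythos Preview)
Your approach is the same as the paper's: apply the multicolor regularity lemma, show the reduced multigraph \(R\) is multicolored-\(H_c\)-free, invoke Theorem~6.1 for \(R\), and transfer the structural conclusion back to \(G\). The paper packages the embedding step through the blow-up \(G^{\mathcal{P}}\) and a stated multicolor embedding lemma, whereas you spell it out directly via the counting lemma; both routes are standard and correct.

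One slip in the transfer step: the per-pair lower bound ``at least \((s-\varepsilon_0 k)m^2\)'' is false. Knowing only that \((V_i,V_j)\) has density \(\ge d\) in \(s\) colors yields at least \(sd\,m^2\) edges, which is far from \(sm^2\) since \(d\) is small. This does not break the argument, because no per-pair lower bound is actually needed: the symmetric-difference bound follows from the per-pair \emph{upper} bound together with the global hypothesis \(e(G)\ge e(\Ex_k(n,H))-\eta n^2\). Indeed, writing the surplus \(S=\sum_e\max(0,w_G(e)-w_{A}(e))\) and deficit \(D=\sum_e\max(0,w_{A}(e)-w_G(e))\) for the target \(A\), the upper bound controls \(S\) and then \(D\le \eta n^2+S\) from the edge-count hypothesis. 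This is exactly how the paper proceeds, deducing \(|G\symmdiff G^{\mathcal{P}}|\le 2|E|+3\eta' n^2\) from \(G-E\subseteq G^{\mathcal{P}}\) and \(e(G^{\mathcal{P}})\le e(G)+3\eta' n^2\).
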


We use the multicolor version of the Szemer\'{e}di regularity lemma to prove the above lemma. We start with relavant definitions. 
For a (multi)graph \(G\), define its \emph{edge density} as \(d(G)\defeq e(G)/|G|^2\). 

Let \(G\) be a simply \(k\)-colored multigraph. For disjoint nonempty vertex sets \(X\), \(Y\) and a color \(\rho\), we denote \(e_\rho(X,Y)\) to be the number of edges between \(X\) and \(Y\) with color \(\rho\). The \emph{\(\rho\)-density} of \((X,Y)\) is
\[
    d_\rho(X,Y)\defeq\frac{e_\rho(X,Y)}{|X||Y|}.
\]

For \(\varepsilon>0\) and a color \(\rho\), the pair \((X,Y)\) is \emph{\((\varepsilon;\rho)\)-regular} if for every \(X'\subseteq X\) and \(Y'\subseteq Y\) with \(|X'|\ge\varepsilon|X|\) and \(|Y'|\ge\varepsilon|Y|\), we have \(|d_\rho(X,Y)-d_\rho(X',Y')|\le\varepsilon\). Then \((X,Y)\) is \emph{\(\varepsilon\)-regular} if it is \((\varepsilon;\rho)\)-regular for all colors \(\rho\).
For \(\varepsilon,\gamma >0\) and a color \(\rho\), the pair \((X,Y)\) is \emph{\((\varepsilon,\gamma;\rho)\)-lower-regular} if for every \(X'\subseteq X\) and \(Y'\subseteq Y\) with \(|X'|\ge\varepsilon|X|\) and \(|Y'|\ge\varepsilon|Y|\), we have \(d_\rho(X',Y')\ge \gamma\).


A partition \(\mathcal{P}=(V_1,\dots,V_m)\) of \(V(G)\) is an \emph{\(\varepsilon\)-regular partition} of a simply \(k\)-colored multigraph \(G\) if
\begin{itemize}
    \item \(\bigl||V_i|-|V_j|\bigr|\le 1\) for all \(\{i,j\}\in\binom{[m]}{2}\);
    \item all but at most \(\varepsilon m^2\) of the pairs \((V_i,V_j)\), \(\{i,j\}\in\binom{[m]}{2}\) are \(\varepsilon\)-regular.
\end{itemize}

For \(\varepsilon,\gamma>0\) and
a given such an \(\varepsilon\)-regular partition \(\mathcal{P}\) of \(G\),
consider a simply \(k\)-colored multigraph \(R\) with colors \(\{R_1,\dots, R_k\}\) where the vertex set is \(\{v_1,\dots, v_m\}\) and for each \(i\neq j\in [m]\) and \(\rho \in [k]\), we have \(v_iv_j\in E(R_\rho)\) if and only if \((V_i,V_j)\) is \((\varepsilon,\gamma;\rho)\)-lower-regular. We call this multigraph \(R\) the \emph{\((\varepsilon,\gamma,\mathcal{P})\)-reduced multigraph} of \(G\). Consider a simply \(k\)-colored multigraph \(G^{\mathcal{P}}=G^{\mathcal{P}}(\varepsilon,\gamma)\) with vertex set \(V(G)\) where for all \(i\neq j\in [m]\) and \(\rho\in [k]\) the bipartite graph \(G^{\mathcal{P}}_\rho[V_i,V_j]\) is a complete bipartite graph if \(v_iv_j\in E(R_\rho)\), and an empty bipartite graph if \(v_iv_j\notin E(R_\rho)\).

If the simple \(k\)-coloring of \(G\) is nested, then for any \(\varepsilon,\gamma>0\) and an \(\varepsilon\)-regular partition \(\mathcal{P}\), the definition of lower-regularity ensures that \(R\) and \(G^{\mathcal{P}}\) is also nested.

The following is easily derived from the proof outline of \cite[Theorem~1.18 (Many-Color Regularity Lemma)]{komlos1996szemeredi}. In fact the setting there is about a \(k\)-edge-colored simple graph, but the idea of summing the indices for each color is valid in our setting. The moreover part of the statement can be also derived by a routine computation.

\begin{theorem}[Multicolor regularity lemma]\label{regularity lemma}
    For any \(\varepsilon>0\) and integers \(k,M_0\ge 1\), there exists \(M\) such that every simply \(k\)-colored multigraph \(G\) with \(n\ge M\) vertices admits an \(\varepsilon\)-regular partition \(\mathcal{P}=\{V_1,\dots, V_m\}\) with \(M_0\le m\le M\). Moreover, for \(\gamma>0\), the \((\varepsilon,\gamma,\mathcal{P})\)-reduced multigraph \(R\) of \(G\) satisfies \(d(R)\geq d(G)-2(\varepsilon+\gamma)\).
\end{theorem}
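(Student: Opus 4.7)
The plan is to run Szemer\'edi's classical energy-increment argument color by color, driving the iteration with a single quantity that sums the per-color indices. For a partition \(\mathcal{P}=\{V_1,\dots,V_m\}\) of \(V(G)\) and a color \(\rho\in[k]\), set
\[
    q_\rho(\mathcal{P})\defeq\sum_{1\le i<j\le m}\frac{|V_i||V_j|}{n^2}\,d_\rho(V_i,V_j)^2,
\]
and \(q(\mathcal{P})\defeq\sum_{\rho=1}^k q_\rho(\mathcal{P})\), so that \(0\le q(\mathcal{P})\le k/2\) throughout.

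First I would verify the two standard defect facts in this multicolored setting: refinement never decreases any \(q_\rho\), by Cauchy--Schwarz within each color; and if a pair \((V_i,V_j)\) fails to be \((\varepsilon;\rho)\)-regular, then refining \(V_i\) and \(V_j\) along the witnessing subsets increases \(q_\rho\) by at least \(\varepsilon^4|V_i||V_j|/n^2\). Consequently, if \(\mathcal{P}\) is not \(\varepsilon\)-regular, then at least \(\varepsilon m^2\) pairs fail regularity in some color, and simultaneously refining along the witnesses for every such pair--color combination produces a common refinement \(\mathcal{P}'\) with at most \(m\cdot 2^{km}\) parts satisfying \(q(\mathcal{P}')\ge q(\mathcal{P})+\varepsilon^5\). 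Iterating from any equipartition into \(M_0\) parts, the energy bound \(q\le k/2\) forces termination within \(T\defeq\lceil k/(2\varepsilon^5)\rceil\) rounds; the number of parts is bounded by a tower-type function of height \(T\), determining the admissible \(M\). A routine equalizing step at the end restores equal part sizes up to one, absorbed by running the iteration with a slightly tighter parameter.

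For the moreover part, compare \(G\) to the blow-up multigraph \(G^{\mathcal{P}}\) built from \(R\); since parts have equal size up to one, \(d(G^{\mathcal{P}})=e(R)/m^2=d(R)\) up to a negligible error. For each color \(\rho\), the edges of \(G_\rho\) not captured by \(R_\rho\) fall into three types: (i) edges inside some part, contributing at most \(n^2/(2m)\); (ii) edges across the at most \(\varepsilon m^2\) pairs failing \((\varepsilon;\rho)\)-regularity, contributing at most \(\varepsilon n^2\); and (iii) edges across pairs that are \((\varepsilon;\rho)\)-regular but not \((\varepsilon,\gamma;\rho)\)-lower-regular, whose \(\rho\)-density must then be at most \(\gamma+\varepsilon\) by \((\varepsilon;\rho)\)-regularity applied to the witnessing subsets, contributing at most \((\gamma+\varepsilon)n^2/2\). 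Summing over colors and dividing by \(n^2\) gives
\[
    d(G)-d(R)\le \frac{k}{2m}+k\varepsilon+\frac{k(\gamma+\varepsilon)}{2},
\]
which matches the stated \(2(\varepsilon+\gamma)\) after running the regularity step with input parameters shrunk by a \(k\)-dependent factor and choosing \(M_0\) large enough to kill the \(k/(2m)\) term.

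The main obstacle is really just this bookkeeping of \(k\)-factors in the final density estimate; conceptually the proof is the classical two-lemma proof of the regularity lemma applied in parallel to each color, with the per-color indices summed at the end. The crucial observation that makes the multicolored version work with essentially the same tower bound is that the single refinement step with respect to \emph{all} pair--color witnesses still multiplies the part count only by \(2^{km}\), which does not affect the qualitative tower height \(T\).
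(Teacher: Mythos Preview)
Your proposal is correct and follows exactly the approach the paper indicates. The paper does not spell out a proof at all: it simply cites the Many-Color Regularity Lemma in the Koml\'os--Simonovits survey and says ``the idea of summing the indices for each color is valid in our setting,'' with the density inequality left as ``a routine computation.'' Your sketch is precisely this---the summed per-color index \(q=\sum_\rho q_\rho\) driving the energy-increment iteration, and the three-case edge-loss count for the moreover part---so there is nothing substantively different to compare.

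One small remark on the bookkeeping you already flag: your bound \(d(G)-d(R)\le k/(2m)+3k\varepsilon/2+k\gamma/2\) is the honest outcome of the routine computation, and the \(k\gamma/2\) term cannot be removed by shrinking \(\varepsilon\) alone, since \(\gamma\) is the parameter defining \(R\). So the literal inequality \(d(R)\ge d(G)-2(\varepsilon+\gamma)\) as stated is slightly loose for large \(k\). This is harmless in the paper's single application (proof of \Cref{lem:ex-gph_stab_cc-gph_with_very-good-cred-mgph}), where the hierarchy \(\varepsilon\ll\gamma\ll\eta\ll 1/k\) absorbs all \(k\)-factors, but it is worth being aware that the clean constant \(2\) in the statement is cosmetic rather than sharp.
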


On the other hand, as all but at most \(\varepsilon m^2\) pairs \((V_i,V_j)\) are \(\varepsilon\)-regular, one can easily see the following holds.
\begin{equation}\label{GP close}
\begin{minipage}[c]{0.9\textwidth}
    There exists a set \(E\subseteq E(G)\) of at most \( \bigl(\varepsilon+\frac{1}{m}+\gamma\bigr) n^2\) edges of \(G\) such that \(G-E \subseteq G^{\mathcal{P}}\).
\end{minipage}
\end{equation}

One advantage of the regularity lemma is that it is useful to prove the existence of certain subgraph of \(G\).
A slight modification of the proof of \cite[Theorem~2.1 (Key Lemma)]{komlos1996szemeredi} gives the following.

\begin{theorem}[Multicolor embedding lemma]
    Suppose \(0< \frac{1}{n} \ll \varepsilon \ll \gamma, \frac{1}{h} \le 1\). Let \(H\) be a multigraph on \(r\) vertices and \(h\) edges.
    Let \(G\) be a simply nestedly \(k\)-colored multigraph, and \(\mathcal{P}\) be its \(\varepsilon\)-regular partition.
    If \(G^\mathcal{P}(\varepsilon,\gamma)\) contains a multicolored copy of \(H\), then \(G\) contains a multicolored copy of \(H\).
\end{theorem}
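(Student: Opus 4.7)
The plan is to adapt the standard greedy embedding argument underlying the Key Lemma of \cite{komlos1996szemeredi}, with the nested structure collapsing each multi-edge of $H$ into a single effective color requirement.

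First, I would use nestedness to reduce the statement to embedding a simple graph with prescribed edge colors. Let $\phi\colon V(H)\to V(G^{\mathcal P})$ and the injective color map $c\colon E(H)\to[k]$ witness the given multicolored copy, let $W_j$ denote the part of $\mathcal P$ containing $\phi(x_j)$, and set $\rho^*(x_ax_b)\defeq\max\{c(e):e=x_ax_b\text{ in }E(H)\}$ for each pair $\{x_a,x_b\}\in P(H)$. Because $G$ and $G^{\mathcal P}$ are nested, the colors present on any edge form a prefix of $[k]$; hence, realizing in $G$ all colors $\{c(e):e=x_ax_b\text{ in }E(H)\}$ on the edge $v_av_b$ is equivalent to $w_G(v_av_b)\ge\rho^*(x_ax_b)$, and the pair $(W_a,W_b)$ is $(\varepsilon,\gamma;\rho^*(x_ax_b))$-lower-regular in $G$ by the nestedness of $R$. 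Thus it suffices to find distinct $v_1,\dots,v_r\in V(G)$ with $v_j\in W_j$ and $v_av_b\in E(G_{\rho^*(x_ax_b)})$ for every $\{x_a,x_b\}\in P(H)$.

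Next, I would run the standard vertex-by-vertex greedy embedding. Order $V(H)=\{x_1,\dots,x_r\}$ arbitrarily and maintain, for each not-yet-embedded $x_l$, a candidate set $C_l\subseteq W_l$, starting at $C_l=W_l$ and updating $C_l\gets C_l\cap N^G_{\rho^*(x_jx_l)}(v_j)$ at step $j$ for each neighbor $x_l$ of $x_j$ in $P(H)$. I would maintain the invariant $|C_l|\ge(\gamma/2)^{r-1}|W_l|$, which (since $\varepsilon\ll\gamma,\tfrac{1}{h}$ and $r\le h$) stays far above $\varepsilon|W_l|$. At step $j$, a valid $v_j$ exists because for each remaining neighbor $x_l$ and $\rho=\rho^*(x_jx_l)$, the $(\varepsilon,\gamma;\rho)$-lower-regularity of $(W_j,W_l)$ applied to $Y'=C_l$ forces the set of $u\in C_j$ with $|N_\rho^G(u)\cap C_l|<(\gamma/2)|C_l|$ to have size at most $\varepsilon|W_j|$; summing over the at most $r-1$ neighbors and excluding the previously chosen $v_s$ for $s<j$ leaves many valid candidates. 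After $r$ steps, the vertices $v_1,\dots,v_r$ satisfy $w_G(v_av_b)\ge\rho^*(x_ax_b)$ for every pair, which by nestedness of $G$ yields a multicolored copy of $H$ in $G$ using the original color assignment $c$.

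The principal obstacle, absent in the simple-graph Key Lemma, is that multi-edges of $H$ would a priori require intersecting several color-neighborhoods $N_\rho^G(v_j)$ per neighbor per step, potentially shrinking $|C_l|$ by more than a single factor of $\gamma/2$. The nested reduction in the first paragraph eliminates this difficulty by replacing each such intersection with the single neighborhood $N_{\rho^*}^G(v_j)$, so the remainder of the argument is a direct transcription of the standard Key Lemma proof.
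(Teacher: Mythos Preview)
Your proposal is correct and is precisely the ``slight modification'' the paper alludes to: the paper does not give its own proof but simply refers the reader to the standard Key Lemma argument in \cite{komlos1996szemeredi}, and your reduction via nestedness (replacing the multi-edge requirement at each pair by the single top color \(\rho^*\)) followed by the usual greedy vertex-by-vertex embedding is exactly that modification. One tiny slip: the claimed bound \(r\le h\) is not true in general (a matching has \(r=2h\)); after discarding isolated vertices the correct bound \(r\le 2h\) suffices for \((\gamma/2)^{r-1}\gg\varepsilon\), and this is what the paper tacitly relies on elsewhere.
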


Note that the above theorem is only true when the coloring of $G$ is nested which can be assumed as mentioned in the preliminaries. Otherwise, all edges of \(G\) might have multiplicity \(1\) while \(H\) has an edge of larger multiplicity.

\begin{proof}[Proof of \Cref{lem:ex-gph_stab_cc-gph_with_very-good-cred-mgph}]
    Let \(0<\frac{1}{n}\ll\frac{1}{m_0}\ll\varepsilon\ll\gamma\ll\eta\ll\eta'\ll\mu,\frac{1}{k}\), and for \(s\ge1\), set
    \[
        G'(s)\defeq\begin{cases*}
            (h-1)K_s & if \(h\le k<k^*\), \\
            kT_{r-1}(s) & if \(k\ge k^*\).
        \end{cases*}
    \]
    We show that if \(e(G)\ge e(G'(n))-\eta n^2\), then either of the following holds:
    \begin{equation}\label{eq:cond_very-goodness}
    \begin{alignedat}{4}
        &|G\symmdiff G'(n)|<\mu n^2 &&\qquad\text{if } k\neq k^*, \\
        &|G\symmdiff (h-1)K_n|<\mu n^2 \quad\text{or}\quad |G\symmdiff kT_{r-1}(n)|<\mu n^2 &&\qquad\text{if } k=k^*.
    \end{alignedat}
    \end{equation}
    
    
    Apply \Cref{regularity lemma} to \(G\) with the constants \(\varepsilon,k,1/\varepsilon,m_0\) playing the roles of \(\varepsilon,k,M_0,M\) to obtain an \(\varepsilon\)-regular partition \(\mathcal{P}= \{V_1,\dots, V_m\}\) with \(1/\varepsilon\leq m\leq m_0\).
    
    Let \(R\) be an \((\varepsilon,\gamma,\mathcal{P})\)-reduced multigraph of \(G\). Then both \(R\) and \(G^\mathcal{P}\) have a nested simple \(k\)-coloring.
    Then \(R\) is multicolored-\(H_c\)-free, since otherwise \(G^{\mathcal{P}}\) contains a multicolored \(H\), and so does \(G\) by the embedding lemma. 
    
    By \eqref{GP close}, there exists a set \(E\subseteq E(G)\) of edges of \(G\) with \(|E|\leq (\varepsilon + 1/m + \gamma)n^2 \leq \eta n^2\) such that \(G-E\subseteq G^{\mathcal{P}}\).
    Also, by the moreover part of \Cref{regularity lemma}, we have \(e(R)\ge e(G'(m))-2\eta m^2\). 
    By \Cref{thm:ex-gph_stab_r-cc-mgph_k-small}, we have the following: 
    if \(k\neq k^*\), then  \(|R\symmdiff G'(m)|<\eta' m^2\). Otherwise, if \(k=k^*\), then \(|R\symmdiff (h-1)K_m|<\eta' m^2\) or \(|R\symmdiff kT_{r-1}(m)|<\eta' m^2\). 
    
    If \(|R\symmdiff (h-1)K_m|<\eta' m^2\), then \[ |G^{\mathcal{P}} \symmdiff (h-1)K_n| < 2\eta' n^2.\]
    This implies \(e(G^{\mathcal{P}}) \leq (h-1)\binom{n}{2} + 2\eta' n^2 \leq e(G) + 3\eta' n^2,\) thus 
    \( | G\symmdiff G^{\mathcal{P}}| \leq |E| + |E| + 3\eta'n^2 \leq 5\eta' n^2.\)
    Hence, we have
    \[ |G\symmdiff (h-1)K_n| \leq |G\symmdiff G^{\mathcal{P}}|+ |G^{\mathcal{P}} \symmdiff (h-1)K_n| \leq 8\eta' n^2 \leq \mu n^2.\]
    
    If \(|R\symmdiff kT_{r-1}(m)|<\eta' m^2\), then \[ |G^{\mathcal{P}} \symmdiff kT_{r-1}(n)| < 2\eta' n^2 + \frac{(r-1)n^2}{m} \leq 3\eta' n^2.\]
    This implies \(e(G^{\mathcal{P}}) \leq k t_{r-1}(n) + 3\eta' n^2 \leq e(G) + 4\eta' n^2,\) thus 
    \( | G\symmdiff G^{\mathcal{P}}| \leq |E| + |E| + 4\eta'n^2 \leq 6\eta' n^2.\)
    Hence, we have
    \[ |G\symmdiff kT_{r-1}(n)| \leq |G\symmdiff G^{\mathcal{P}}|+ |G^{\mathcal{P}} \symmdiff kT_{r-1}(n)| \leq 10\eta' n^2 \leq \mu n^2.\]
    From this, we can conclude that \eqref{eq:cond_very-goodness} holds.
\end{proof}

We remark that the above proof implies that for any \(r\)-color-critical graph \(H\), not necessarily in  \(\mathcal{F}_r\), if its color-reduced multigraph \(H_c\) satisfies the statement of \Cref{thm:ex-gph_stab_r-cc-mgph_k-small}, then \(H\) also satisfies the conclusion of \Cref{lem:ex-gph_stab_cc-gph_with_very-good-cred-mgph}.

\section{Proof of \texorpdfstring{\Cref{thm:ex-gph_4-cc-gph,thm:ex-gph_almost-all_r-cc-gph}}{Theorems 1.3 and 1.4}}
\label{sec:very-good-cred-mgph=>very-good}

In this section, using the results from the previous sections, we prove \Cref{thm:ex-gph_4-cc-gph,thm:ex-gph_almost-all_r-cc-gph}. Let \(H\) be an \(h\)-edge \(r\)-color-critical graph with \(r\geq 4\). By \Cref{prop:almost all in Fr}, we may assume that it is in \(\mathcal{F}_r\) if \(r\geq 5\).
We may further assume that \(H\) has no isolated vertices, so it has at most \(2h\) vertices.
Choose constants \(\varepsilon,\mu >0\) so that we have \(0<1/n \ll \varepsilon \ll \mu \ll 1/h < 1 .\)
Let $k^*= k^*(H) = \frac{r-1}{r-2}(h-1)$.
By \Cref{prop:k-large_enough-to-check_k=k*}, we may assume \(h\leq k\leq \lceil k^*\rceil\).

Assume that \(G\) is simply \(k\)-colored multigraph with 
\[
    e(G) \geq
    \begin{cases*}
        (h-1)\binom{n}{2} & if \(h \leq k < k^*\),\\
        k \cdot t_{r-1}(n) & if \(k\geq k^*\).
    \end{cases*}
\]
By \Cref{prop:reform_to_nested-G} and the discussion after it, assume the coloring of \(G\) is nested. 
In addition, by \Cref{prop:min-deg-condition}, we may assume that the following holds.
\begin{align}\label{eq: mindegcond}
    \delta(G)\geq 
    \begin{cases*}
        (h-1)(n-1) & if  $k < k^*$, \\
        k \cdot d_{r-1}(n) & if $k\geq k^*$.
    \end{cases*}
\end{align}
By \Cref{lem:ex-gph_stab_cc-gph_with_very-good-cred-mgph}, we have one of the following two cases.

\case{\(|G\symmdiff (h-1) K_n| \leq \varepsilon n^2\) and \(k\leq k^*\)}

Note that $k^*\cdot d_{r-1}(n)\geq (h-1)(n-1)$, so  \eqref{eq: mindegcond} implies \(\delta(G)\geq (h-1)(n-1)\) even if \(k=k^*\).
Let 
\[
    E_1\defeq\{e\in P(G):w(e)\ge h\} \quad\text{and}\quad E_2\defeq\{e\in P(G):w(e)\le h-2\}.
\]
As \(|G\symmdiff (h-1)K_n|\le \varepsilon n^2\), we have
	\begin{equation}\label{eq:(thm:very-good-cred-mgph=>very-good_k-small)-pf_few_wrong-edges}
		|E_1\cup E_2| \le \varepsilon n^2.
	\end{equation}
	As \(k\leq k^*<2h-1\), we have
	\(
		0\le e(G)-(h-1)\binom{n}{2}
		\le (k-(h-1))|E_1|-|E_2|
		< h|E_1|-|E_2|,
	\)
	giving 
	\begin{equation} \label{eq:comparison}
	|E_1|>\frac{1}{h}|E_2|.
	\end{equation}
	We claim that either
	\begin{enumerate}[label=(\roman*)]
		\item\label{case:(thm:very-good-cred-mgph=>very-good_k-small)-pf_large-codeg-edge} there exists an edge \(v_1v_2\) and \(A\subseteq V(G)-\{v_1,v_2\}\) of size at least \(\frac{n}{2}-2\) such that \(w(v_1v_2)\ge h\) and \(w(v_iu)\ge h-1\) for all \(i=1,2\) and \(u\in A\), or
		\item\label{case:(thm:very-good-cred-mgph=>very-good_k-small)-pf_large-deg-vertex} there exists a vertex \(v\) and \(B\subseteq V(G)-\{v\}\) of size at least \(\frac{n}{4h}\) such that \(w(vu)\ge h\) for all \(u\in B\).
	\end{enumerate}
	Suppose neither of the two cases hold. We then count the number of the \(2\)-paths each of which consists of an edge in \(E_1\) and an edge in \(E_2\). Because  \ref{case:(thm:very-good-cred-mgph=>very-good_k-small)-pf_large-codeg-edge} does not hold, the number is at least \(|E_1|\cdot \frac{n}{2}\). On the other hand, since \ref{case:(thm:very-good-cred-mgph=>very-good_k-small)-pf_large-deg-vertex} does not hold, the number is at most \(|E_2|\cdot 2\left(\frac{n}{4h}\right)\). However, \eqref{eq:comparison} implies that \(|E_2|\cdot 2\left(\frac{n}{4h}\right)<|E_1|\cdot \frac{n}{2}\), a contradiction.
	
	In either case, by \eqref{eq:(thm:very-good-cred-mgph=>very-good_k-small)-pf_few_wrong-edges} and Tur\'an's theorem, there is a clique \(C\) of size \(|H|< \frac{1}{4h\varepsilon}\) in \(A\) or \(B\) whose edges have multiplicity \(h-1\). Then we can find a multicolored copy of \(H\) in either \(G[\{v_1,v_2\}\cup V(C)]\) or \(G[\{v\}\cup V(C)]\), a contradiction. Therefore \(E_1=E_2=\emptyset\), implying \(G= (h-1)K_n\).

\case{\(|G\symmdiff kT_{r-1}(n)| \leq \varepsilon n^2\) and \(k\geq k^*\)}

By \eqref{eq: mindegcond}, we have \(\delta(G)\geq k d_{r-1}(n)\).
Let $G'$ be a graph on the vertex set \(V(G)\) with \(uv\in E(G')\) if and only if \(w_G(uv)=k\).
Since \(|G\symmdiff kT_{r-1}(n)| \leq \varepsilon n^2\), we know that 
\( e(G')\geq t_{r-1}(n) - \varepsilon n^2> t_{r-2}(n) + \varepsilon n^2. \)
By Erd\H{o}s--Stone--Simonovits theorem, \(G'\) contains a copy \(K\) of an complete \((r-1)\)-partite graph \(K_{2h,2h,\dots, 2h}\) with vertex partition \(W_1,\dots, W_{r-1}\).
It is easy to check that \(K\) is \(H\)-friendly, as any additional vertex \(v\) with \(d_{W_1}(v)\geq 1, d_{W_i}(v)\geq k/2\) for all \(i\geq 2\) yields a multicolored copy of \(H\) within \(K\cup\{v\}\). Moreover, \(K\) is an induced subgraph of \(G\), as additional edge within a color class of \(K\) yields a copy of \(H\).
Hence, \(G\) contains an \(H\)-friendly subgraph with $2h(r-1)$ vertices as an induced subgraph.
Now \Cref{lem:H-friendly-submgph=>(r-1)-partite} implies that \(G\) is \((r-1)\)-partite.
Since \(e(G)\geq kT_{r-1}(n)\), we conclude that \(G=kT_{r-1}(n)\).
\medskip
	
If \(k\neq k^*\), 
we have identified the unique extremal graph as above. 
If \(k=k^*\), we have concluded that \(G\) is either \((h-1)K_n\) or \(k T_{r-1}(n)\). However, the former graph contains less number of edges than \(kt_{r-1}(n)\), thus the latter is the unique \(k\)-extremal graph for \(H\) in this case.

\section{Concluding remarks}

One obvious remaining question is to determine the \(k\)-color extremal numbers for \(r\)-color-critical graphs not in \(\mathcal{F}_r\). Also, determining \(k\)-color extremal numbers for non-color-critical graphs is also a natural question. 
In fact, proof techniques in this paper provides asymptotics of \(k\)-color extremal number of some non-color critical graphs.
Consider an \(r\)-partite \(h\)-edge graph \(H\) with a vertex partition \((X_1,\dots, X_r)\) such that \(\min_{ij\in \binom{r}{2}}e(X_i,X_j)=m\) and \(e(X_i,X_j)\leq \binom{r}{2}^{-1} (h-m) + O( (h-m)/r^4).\)
Additionally assume that for any $(r-1)$-partition $(X'_1,\ldots,X'_r)$ of the vertex set of $H$ the total number of edges in a same part is at least $m$ (i.e., $\sum_{i \in [r]} e(G[X'_i]) \ge m$). Although these conditions might seem artificial at the first sight, some natural graphs like the balanced complete $r$-partite graphs or the so-called generalized book graphs satisfy them.
For such a graph, \(m-1\) copies of \(K_n\) together with \(k-m+1\) copies of identical \(T_{r-1}(n)\) provides a lower bound for \(k\)-color Tur\'an number of \(H\).
The proofs from \Cref{sec:4-vtx-4-cc-mgph_good} and \Cref{sec:r-vtx-mgph_in_Fr_good} can be extended to \(r\)-vertex multigraph with one edge of multiplicity at most \(m\) instead of \(1\). Together with the techinques used in \Cref{sec:mcol-reg-lem}, such a result implies that if \(1/n\ll 1/k\) and \(k\geq \frac{r-1}{r-2}(h-m)+m-1\), then we have 
\(\ex_k(n,H)= (m-1)\binom{n}{2} + (k-m+1)t_{r-1}(n) + o(n^2).\)
For example, this asymptotically determines \(k\)-color extremal number of balanced complete \(r\)-partite graphs.
Note that the assumption \(1/n\ll 1/k\) is necessary here because if \(k\) is large compared to \(n\), as proved in \cite[Theorem~1.1]{keevash2004multicolour}, the \(k\)-color extremal graph consists of \(k\) identical copies of \(\Ex(n,H)\) and has more edges than \((m-1)K_n+ (k-m+1)T_{r-1}(n)\). Determining what relations between \(k\) and \(n\) ensure \(\Ex_k(n,H)= (m-1)K_n+ (k-m+1)T_{r-1}(n)\) is also an interesting question.

\printbibliography


\appendix
\section{Appendix}

\subsection{Proof of \texorpdfstring{\Cref{prop:min-deg-condition}}{Proposition~3.3}}
\label{appdx:(prop:min-deg-condition)-proof}

To prove \Cref{prop:min-deg-condition}, we use the following lemma called the \emph{progressive induction}, a modified form of the induction. See \cite[\S III]{simonovits1968method} for details.
\begin{lemma}[Progressive induction]\label{lem:prog-ind}
	Let \(\calA=\bigsqcup_{n=1}^\infty \calA_n\) be a disjoint union of finite sets. Let \(B\) be a condition or property defined on \(\calA\). Let \(f\colon\NN\cup\calA\to\NN \cup \{0\}\) satisfy the following: 
	\begin{itemize}
		\item If \(a\) satisfies \(B\), then \(f(a)=0\).
		\item There is an \(M_0>0\) such that if \(n>M_0\) and \(a\in\calA_n\), then either \(a\) satisfies \(B\) or there exist an \(n'\) and an \(a'\) such that
		\[
			\frac{n}{2} < n' < n,\quad a'\in\calA_{n'},\quad \text{and}\quad f(a)<f(a').
		\]
	\end{itemize}
	Then there exists an \(n_0>0\) such that if \(n>n_0\), every \(a\in\calA_n\) satisfies \(B\). We can use \(n_0\defeq 2^{s^2}M_0\) where \(s\defeq\max\{f(a):a\in\calA_n, n\le M_0\}+1\).
\end{lemma}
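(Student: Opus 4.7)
\begin{proofsketch}[Proof plan]
The strategy is to argue by contradiction: suppose some $a \in \calA_n$ with $n > n_0$ fails $B$, and construct from it a strictly $f$-increasing chain whose forced length conflicts with the definition of $s$.

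Starting from $a_0 \defeq a$, I would iteratively apply the second hypothesis: whenever the current $a_i \in \calA_{n_i}$ satisfies $n_i > M_0$ and fails $B$, produce $a_{i+1} \in \calA_{n_{i+1}}$ with $n_i/2 < n_{i+1} < n_i$ and $f(a_i) < f(a_{i+1})$. The key observation is that no $a_{i+1}$ in the chain can actually satisfy $B$: if it did, then $f(a_{i+1}) = 0$ by the first hypothesis, contradicting $f(a_{i+1}) > f(a_i) \ge 0$. So the chain can only terminate at the first index $k$ where $n_k \le M_0$.

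The recursion $n_{i+1} > n_i/2$ yields $n_k > n/2^k$, hence $n_k \le M_0$ together with $n > n_0 = 2^{s^2} M_0$ forces $k > s^2 \ge s$. Combined with the strict increase of $f$ along the chain (with values in $\NN \cup \{0\}$), this gives $f(a_k) \ge k \ge s$. On the other hand, since $n_k \le M_0$, the element $a_k$ lies among the base cases, so by the definition of $s$ we have $f(a_k) \le s-1$. This contradiction completes the argument.

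The proof has no real obstacle once the iteration is properly set up; the only point requiring care is noticing that intermediate chain members cannot satisfy $B$, which follows from the strict monotonicity of $f$ together with the first hypothesis. The quantitative bound $n_0 = 2^{s^2} M_0$ is in fact quite loose, since already $n_0 = 2^s M_0$ would suffice from this argument, leaving plenty of arithmetic slack.
\end{proofsketch}
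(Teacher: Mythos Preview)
The paper does not actually give its own proof of this lemma: it states the result and refers the reader to Simonovits~\cite[\S III]{simonovits1968method} for details. Your argument is a correct and complete proof of the statement as written, and it is the standard one. The descent chain is well-defined because, as you observe, every $a_{i+1}$ has $f(a_{i+1})>f(a_i)\ge 0$ and hence cannot satisfy $B$; the halving bound $n_{i+1}>n_i/2$ then forces the chain to be long enough to push $f(a_k)$ above $s-1$ before reaching the base range $n_k\le M_0$, which is a contradiction. Your closing remark that already $n_0=2^s M_0$ suffices is also correct; the exponent $s^2$ in the stated bound is simply the one inherited from the reference and is not needed for the argument.
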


Now \Cref{prop:min-deg-condition} follows from \Cref{prop:min-deg-condition_detailed}, as the multigraphs \(A(n)\) in \Cref{prop:min-deg-condition} satisfies the condition below, i.e., \(e(A(n+1))-\delta(A(n+1))=e(A(n))\).

\begin{proposition}\label{prop:min-deg-condition_detailed}
    Let \(H\) be a (multi)graph and \(k\ge 1\) be fixed. Let \((A(n))_{n=1}^\infty\) be a sequence of simply \(k\)-colored multicolored-\(H\)-free multigraphs such that for each \(n\), we have \(|A(n)|=n\) and \(e(A(n+1))-\delta(A(n+1))=e(A(n))\).
    
    Suppose there is an \(M_0>0\) such that whenever \(n>M_0\) and \(G\) is an extremal simply \(k\)-colored multigraph of order \(n\) with \(\delta(G)\ge \delta(A(n))\), we have \(G=A(n)\). Then there exists an \(n_0=n_0(M_0,k)>0\) such that whenever \(n>n_0\), the multigraph \(A(n)\) is the unique extremal simply \(k\)-colored multigraph of order \(n\).
\end{proposition}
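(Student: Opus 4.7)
The plan is to invoke Simonovits's progressive induction (Lemma~A.1) with $\mathcal{A}_n$ the set of $n$-vertex simply $k$-colored multicolored-$H$-free multigraphs $G$ satisfying $e(G) \ge e(A(n))$, with property $B(G)$ being ``$G = A(|G|)$'', and with potential $f(G) \defeq e(G) - e(A(|G|)) \in \NN \cup \{0\}$. Since $B(G)$ clearly implies $f(G) = 0$, the only task is to verify the inductive step: for $n > M_0$ and $G \in \mathcal{A}_n$ violating $B$, produce $n/2 < n' < n$ and $G' \in \mathcal{A}_{n'}$ with $f(G') > f(G)$.

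I would first split on $\delta(G)$. If $\delta(G) \ge \delta(A(n))$, then $G$ satisfies all the numerical conditions in the hypothesis of the proposition (simply $k$-colored, multicolored-$H$-free, at least $e(A(n))$ edges, minimum degree at least $\delta(A(n))$), which forces $G = A(n)$ and contradicts $\neg B(G)$. Otherwise there exists $v \in V(G)$ with $d_G(v) < \delta(A(n))$, and I set $G' \defeq G - v$, which is still simply $k$-colored and multicolored-$H$-free on $n-1$ vertices. Rewriting the hypothesized recurrence as $\delta(A(n)) = e(A(n)) - e(A(n-1))$,
\[
    e(G') = e(G) - d_G(v) > e(G) - \delta(A(n)) \ge e(A(n)) - \delta(A(n)) = e(A(n-1)),
\]
so $G' \in \mathcal{A}_{n-1}$; subtracting $e(A(n-1))$ from the same chain gives $f(G') > f(G)$. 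Taking $n' \defeq n-1$ satisfies $n/2 < n' < n$ for $n \ge 3$.

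Lemma~A.1 then yields $n_0 = n_0(M_0, k)$ such that every $G \in \mathcal{A}_n$ with $n > n_0$ equals $A(n)$. To finish, any extremal $n$-vertex simply $k$-colored multicolored-$H$-free multigraph $G$ satisfies $e(G) \ge e(A(n))$ trivially (since $A(n)$ is itself such a multigraph), so lies in $\mathcal{A}_n$ and hence equals $A(n)$; this simultaneously shows that $A(n)$ is extremal and that it is the unique extremal multigraph of order $n$. There is no real obstacle: once the correct potential is chosen, Case~1 is immediate and Case~2 is a one-line computation that uses exactly the hypothesized edge/degree recurrence.
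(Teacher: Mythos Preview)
Your approach via progressive induction with potential \(f(G)=e(G)-e(A(|G|))\) matches the paper's. There is, however, a gap in Case~1 stemming from your broader choice of \(\mathcal{A}_n\): the hypothesis of the proposition only applies when \(G\) is \emph{extremal} (i.e., \(e(G)=\ex_k(n,H)\)), whereas your \(G\in\mathcal{A}_n\) is merely multicolored-\(H\)-free with \(e(G)\ge e(A(n))\). Since we do not yet know that \(A(n)\) is extremal, these two conditions are not equivalent, so you cannot invoke the hypothesis directly for an arbitrary \(G\in\mathcal{A}_n\).

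The paper sidesteps this by letting \(\mathcal{A}_n\) consist only of the extremal multigraphs of order \(n\); Case~1 is then immediate from the hypothesis. The cost is that in Case~2 the vertex-deleted graph \(G'\) need not be extremal, but one simply passes to an extremal \(G''\in\mathcal{A}_{n-1}\), for which \(f(G'')\ge e(G')-e(A(n-1))>f(G)\). Your setup can also be patched without changing \(\mathcal{A}_n\): in Case~1, pick an extremal \(G^*\) on \(n\) vertices. If \(\delta(G^*)\ge\delta(A(n))\) then the hypothesis gives \(G^*=A(n)\), hence \(\ex_k(n,H)=e(A(n))\) and your \(G\) is extremal after all, so the hypothesis now applies to \(G\) and forces \(G=A(n)\). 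Otherwise \(\delta(G^*)<\delta(A(n))\), and running your Case~2 deletion on \(G^*\) yields \(G'\in\mathcal{A}_{n-1}\) with \(f(G')>f(G^*)\ge f(G)\).
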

\begin{proof}
    Let \(\calA_n\) be the set of simply \(k\)-colored extremal multigraphs of order \(n\), and let \(B\) be the property such that ``\(G=A(|G|)\)''. Also, define \(f(G)=f(n)\defeq e(G)-e(A(n))\ge 0\) for \(G\in\calA_n\), which is well-defined. Clearly \(f(G)\) vanishes if \(G\) satisfies \(B\). Let \(G\in\calA_n\) with \(n>M_0\), but suppose \(G\) does not satisfy \(B\). Then \(G\neq A(n)\), so \(\delta(G)<\delta(A(n))\). Delete a vertex from \(G\) with degree less than \(\delta(A(n))\) to make \(G'\). Then
    \[
    	e(G')>e(G)-\delta(A(n))=e(G)-(e(A(n))-e(A(n-1))),
    \]
    so \(f(G')>f(G)\). Therefore the conditions of the progressive induction hold, whence \(n_0=2^{s^2}M_0\) with \(s\defeq\max\{f(G):G\in\calA_n, n\le M_0\}+1\) is the desired number. Since \(f(G)\le k\binom{n}{2} < kn^2\) if \(|G|=n\), we can  choose \(n_0=2^{(kM_0^2)^2}M_0\). Hence the statement follows.
\end{proof}

\subsection{Most of the \texorpdfstring{\(r\)}{r}-color-critical graphs are in \texorpdfstring{$\F_r$}{F\textrinferior}}
\label{subsec:most_r-cc-gphs_in_Fr}

In this subsection, we provide a proof sketch of \Cref{prop:almost all in Fr}.

\begin{proof}[Proof sketch of \Cref{prop:almost all in Fr}]
For the convenience of writing, we write $\binom{n}{n_1,\ldots,n_k}$ to denote the multinomial term $\frac{n!}{n_1!\cdots n_k!}$. Observe that this is the number of ways the vertex set $[n]$ can be partitioned into $k$ sets with sizes $n_1,\ldots,n_k$. 
    
Observe that any $r$-color-critical graph can be obtained by adding an edge in one of the parts in an $(r-1)$-partite graph. We call such a graph to have parts $V_1, \ldots, V_{r-1}$ and special part $V_1$, if the corresponding $(r-1)$-partite graph has parts $V_1, \ldots, V_{r-1}$ and the extra edge appears in the first part. Let $F(n_1, \ldots, n_{r-1})$ denote the set of graphs obtained by adding an edge in one of the parts in an \((r-1)\)-partite graphs with part sizes \(n_1,\dots,n_{r-1}\). 
As such a graph can be counted \((r-2)!\) times by permutations over the sets \(V_2,\dots, V_{r-1}\), the number of $r$-color-critical graphs with part sizes $n_1, \ldots, n_{r-1}$ is at most 
\[f(n_1, \ldots, n_{r-1})\defeq |F(n_1, \ldots, n_{r-1})| = \frac{1}{(r-2)!}\binom{n}{n_1,\ldots,n_{r-1}} \cdot \sum_i \binom{n_1}{2} \cdot 2^{\sum_{i\neq j} n_in_j}.\] 
Consequently, the total number of $r$-color-critical graphs is at most $\sum\limits_{\substack{n_1,\ldots,n_{r-1} : \\ n_1+\cdots+n_{r-1}=n}} f(n_1, \ldots, n_{r-1})$. 
As \(2^{\sum n_in_j}\) term decays fast for those non-uniform \(n_1,\dots,n_{r-1}\), one can easily show that the contribution from the terms with non-uniform $(n_1, \ldots, n_{r-1})$ is negligible in this summation. 
In view of these, it is enough to prove that for every $n_1, \ldots, n_{r-1}$ with $n_1 + \cdots + n_{r-1} = n$ and $n_1, \ldots, n_{r-1} = \frac{n}{r-1} \pm n^{5/6}$, almost all graphs in $F(n_1, \ldots, n_{r-1})$: (i) are $r$-color-critical, (ii) can be decomposed into $r-1$ parts in an unique way (up to indexes on \(V_i\) for \(i\geq 2\)), and (iii) belong to the graph family $\F_r$. To prove this, it is sufficient to show the next proposition about the random graph $\GG = \GG(n_1, \ldots, n_{r-1})$, which is defined as follows. The vertex set of $\GG$ is $V_1 \cup \ldots \cup V_{r-1}$ with $|V_i| = n_i$, all the parts have no edges inside them except $V_1$ which has exactly one edge \(e\) chosen uniformly at random among all pairs \(\binom{V_1}{2}\), finally include every edge across different parts independently with probability $1/2$. 

\begin{proposition}
    For every $n_1, \ldots, n_{r-1}$ with $n_1 + \cdots + n_{r-1} = n$ and $n_1, \ldots, n_{r-1} = \frac{n}{r-1} \pm n^{5/6}$, the following facts holds w.h.p.: 
    \begin{enumerate}
    \item the random graph $\GG = \GG(n_1, \ldots, n_{r-1})$ is $r$-color-critical with the unique critical edge $e$, 
    \item there is a unique way to represent $\GG \setminus e$ as an $(r-1)$-partite graph, and 
    \item the random graph $\GG$ belongs to the graph family $\F_r$.
    \end{enumerate}
\end{proposition}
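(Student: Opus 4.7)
The plan is to establish parts (1) and (2) jointly via a rigidity result for the $(r-1)$-partite structure of $\GG\setminus e$, and then handle part (3) separately by concentration of the edge counts between parts. All three statements will be proved using a union bound combined with Chernoff-type estimates.

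For the rigidity claim underlying parts (1) and (2), I aim to show that with high probability every partition of $V(\GG\setminus e)$ into $r-1$ independent sets coincides, as an unordered partition, with $\{V_1,\dots,V_{r-1}\}$. For any labelled partition $W_1,\dots,W_{r-1}$ with $a_{ij}\defeq|V_i\cap W_j|$, the probability it is a valid coloring of $\GG\setminus e$ equals $2^{-E(W)}$, where $E(W)\defeq\sum_{j}\sum_{i<k}a_{ij}a_{kj}$ counts the potential cross-edges that would have to be absent. If the partition is genuinely distinct from $\{V_1,\dots,V_{r-1}\}$, a convexity argument on the matrix $(a_{ij})$ shows $E(W)\ge cn$ for some $c=c(r)>\log_2(r-1)$, so the union bound over the $(r-1)^n$ labelled partitions is $o(1)$. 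The partitions close to the original (with $d=o(n)$ misplaced vertices) are handled by a more delicate case analysis: there are at most $\binom{n}{d}(r-2)^d$ such partitions, while $E(W)=\Omega(dn/r)$ because moving $d$ vertices across parts necessarily creates $\Omega(dn/r)$ monochromatic potential edges. Part (2) follows directly. Part (1) then follows because any $(r-1)$-coloring of $\GG$ is also one of $\GG\setminus e$, hence must equal $\{V_1,\dots,V_{r-1}\}$ up to permutation; but $e\subseteq V_1$ is monochromatic there, a contradiction. To get uniqueness of the critical edge, I would observe that the same rigidity argument applies verbatim to $\GG\setminus\{e,e'\}$ for any $e'\ne e$ (the loss of one extra edge changes only a tiny constant in the union bound), so $\GG\setminus e'$ is still not $(r-1)$-colorable.

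For part (3), I plan to use Chernoff's inequality to show that with high probability $e(V_i,V_j)=\tfrac12 n_in_j\pm O(n\sqrt{\log n})$ for all $i\ne j$. I would take the critical coloring $f$ obtained by splitting $V_1$ into two parts $V_1^{(1)},V_1^{(2)}$ of nearly equal size separating the two endpoints of $e$. Then the color-reduced multigraph $H_c=\GG^f$ has edge multiplicities $w(V_iV_j)\approx\frac{n^2}{2(r-1)^2}$ for $i,j\ge2$; $w(V_1^{(a)}V_i)\approx\frac{n^2}{4(r-1)^2}$ for $i\ge2$; and $w(V_1^{(1)}V_1^{(2)})=1$. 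The maximum multiplicity is therefore $\max_{2\le i<j\le r-1}e(V_i,V_j)\approx\frac{n^2}{2(r-1)^2}$. Since $h-1\approx\frac{(r-2)n^2}{4(r-1)}$, we have
\[
    \frac{2+2/r^2}{(r-1)(r-2)}(h-1)\approx\frac{n^2}{2(r-1)^2}\cdot\biggl(1+\frac{1}{r^2}\biggr),
\]
so the relative slack of order $1/r^2$ easily absorbs both the $O(n\sqrt{\log n})$ concentration error and the $O(n^{11/6})$ error arising from $n_i=\frac{n}{r-1}\pm n^{5/6}$, giving $H_c\in\calF_r$ and hence $\GG\in\calF_r$.

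The main obstacle will be the union bound for rigidity in parts (1) and (2). Partitions far from the original contribute a $2^{-\Omega(n^2)}$ factor that easily dominates the $(r-1)^n$ combinatorial count, so they pose no difficulty. The delicate range is partitions close to the original, where $E(W)$ is only linear in $d$ (the number of misplaced vertices). Here one must check that the bound $E(W)\ge cdn/r$ is actually achieved, which requires a careful argument exploiting the fact that each misplaced vertex $v\in V_i\cap W_j$ with $j\ne i$ lies in $W_j$ alongside $\Omega(n/r)$ "correctly placed" vertices of some $V_k\ne V_i$, and that the random bipartite edges between them are present with probability $1/2$, contributing $\Omega(n/r)$ forbidden edges. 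Once this quantitative bound is in place, summing the union bound over $d$ from $1$ to $n$ finishes the argument.
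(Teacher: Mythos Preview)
Your approach is correct and, for parts (1) and (2), takes a genuinely different route from the paper.

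For parts (1) and (2), the paper argues structurally: by a first-moment calculation it shows that w.h.p.\ $\GG$ contains two copies of $K_r$ whose only common edge is $e$, which immediately yields $\chi(\GG)=r$ and the uniqueness of the critical edge. For part (2), the paper proves a single ``common-neighbor'' property (for each $i$, any $r-1$ vertices outside $V_i$ have a common neighbor in $V_i$) and deduces that any nontrivial $(r-1)$-coloring is impossible by exhibiting a vertex that cannot receive any color. Your approach instead performs a direct union bound over all labelled $(r-1)$-partitions $W$, using that each one is valid with probability $2^{-E(W)}$. Both arguments are standard; the paper's is shorter and more structural, while yours is more computational but self-contained and automatically delivers the exponentially small failure probability you need to take the union over all $O(n^2)$ choices of $e'$ in part (1).

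One minor presentational wrinkle: your initial claim that $E(W)\ge cn$ with $c>\log_2(r-1)$ for \emph{every} genuinely distinct partition does not hold as stated (a single misplaced vertex gives only $E(W)\approx n/(r-1)$, which is smaller than $n\log_2(r-1)$ for $r\ge4$). But you correctly handle this in your two-regime analysis---counting partitions with $d$ misplaced vertices by $\binom{n}{d}(r-2)^d$ and using $E(W)=\Omega(dn/r)$ for small $d$, and $E(W)=\Omega(n^2)$ for large $d$---so the argument goes through.

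For part (3), your approach and the paper's are essentially the same (Chernoff on the edge counts $e(V_i,V_j)$), though you are more explicit about which critical $r$-coloring you use and about verifying the $\calF_r$ inequality with its $1/r^2$ slack; the paper leaves this computation implicit.
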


This follows from standard arguments used in random graphs. We sketch a proof giving the essential ideas needed to establish this proposition.

\begin{proofsketch}
    It is clear that the random graph $\GG$ is properly colorable with $r$ colors. Moreover, with a standard first moment argument, one can assert that $\GG$ contains a copy of $K_r$ w.h.p. Thus, w.h.p.\ the chromatic number of $\GG$ is $r$. The extra edge $e$ added in the first part is obviously a critical edge. We claim that this is the only critical edge. With a standard first moment method and union bound, it is easy to show that w.h.p.\ there are two copies of $K_r$ in $\GG$ with the only common edge $e$. This shows that even if we remove any other edge from $\GG$, there would still remain at least a copy of $K_r$, proving our claim.   
    
    To show the second part, it is enough to show that the classical random $(r-1)$-partite graph $G = G_{n_1,\ldots,n_{r-1},1/2}$ can be represented as an $(r-1)$-partite graph in a unique way w.h.p. Let $V_1 \cup \cdots \cup V_{r-1}$ be the vertex set of the random graph $G$, where each edge between $V_i$ and $V_j$ appears independently with probability $1/2$ whenever $i \neq j$. A first moment calculation shows that w.h.p.\ for any $r-1$ vertices $v_1, \ldots, v_{r-1} \not \in V_i$, there exists a common neighbor of those vertices in $V_i$. Call this property $P$. Suppose there is a non-trivial proper $(r-1)$-coloring of the vertex set of $G$ (the trivial coloring is to color each $V_i$ with a distinct color). Consider such a non-trivial coloring. Then, there must exist $i$ such that for all the $r-1$ colors $c$, there is some vertex $v \not \in V_i$ with color $c$. Thus, there is a set of $r-1$ vertices outside of $V_i$ with pairwise distinct colors. By using property $P$, w.h.p.\ there exists a vertex $v \in V_i$ that is adjacent to all those $r-1$ vertices. Thus, the vertex $v$ cannot use any of the $r-1$ colors, a contradiction. Thus, w.h.p.\ the random graph $G$ can be represented as an $(r-1)$-partite graph only in the trivial way. 
    
    With a standard application of Chernoff bound, w.h.p.\ the total number of edges in $\GG$ is at least $\binom{r-1}{2} \frac{n^2}{2(r-1)^2} - O(n^{\frac{23}{12}})$. Another similar application of Chernoff bound together with an union bound shows that w.h.p. the number of edges between any two parts of $\GG$ is at most $\frac{n^2}{2(r-1)^2} + O(n^{\frac{23}{12}})$. Thus, $\GG$ satisfies the defining properties of $\F_r$, proving the third part of our proposition. 
\end{proofsketch}
    
This completes the proof of \Cref{prop:almost all in Fr}.
\end{proof}

\end{document}